%% file: main.tex
\documentclass[]{amsart}
\input{librairies_and_commands}

\title{Lipschitz solutions to mean field games with a major player and applications}
\author{Meynard Charles}
\begin{document}
\begin{abstract}
This paper introduces a notion of weak solution for the coupled system of master equations in mean field games with a major player. It extends the previously introduced notion of Lipschitz solutions in mean field games. By relying on a probabilistic representation of the system of master equations, we prove that there can exist at most one sufficiently smooth solution and that it is consistent with the associated Nash equilibrium. In this approach, coefficients are only required to be Lipschitz, in particular, no differentiability assumption with respect to probability measures is needed. 
In a second part, we apply this notion of solution to prove the existence and uniqueness of solutions to MFGs with a major player on intervals of arbitrary length. Our argument relies on assuming that the intensity of the Brownian common noise driving the state of the major player is sufficiently large, as well as a joint displacement monotonicity assumption between the coefficients of minor players and those of the major player. Most notably, this joint monotonicity allows us to prove that the threshold of volatility can be taken independently of the horizon of the game considered, without any long time decoupling assumption on the dependence between minor players and the major player. Finally, inspired by recent extragradient methods for mean field games, we present an algorithm that converges exponentially fast to the solution of the major-minor probabilistic system under this monotonicity assumption. 
Thanks to the generality of our approach, all results presented in this article hold for mean field games of controls with a major player.
\end{abstract}
\maketitle

\section{Introduction}
\subsection{General introduction}
Mean field games (MFGs) were introduced in \cite{Lions-college} and subsequently in\cite{LasryLionsMFG} by Lasry and Lions to study Nash equilibria in differential games with a large number of players. If players interact with each other only through aggregation terms, the mean field limit yields a system consisting of a Hamilton-Jacobi-Bellman (HJB) equation for the evolution of the players' decisions coupled with a Fokker-Planck equation for their distribution over time. This system is often referred to as the mean field game system \cite{LasryLionsMFG,cardaliaguet2010notes}. An alternative formulation of MFGs consists in introducing the master equation, a single, infinite dimensional equation on the space of probability measures that encompasses the information of the whole MFG system \cite{convergence-problem,Lions-college}. Throughout the years, the wellposedness of the MFG master equation has been studied extensively; see the books \cite{convergence-problem,carmona2018probabilistic} for the existence of smooth solutions using both PDEs and probabilistic arguments. The existence of solutions over intervals of arbitrary length usually relies on a monotonicity condition that ensures the uniqueness and wellposedness of the associated Nash equilibrium. The notion of flat monotonicity was used in the works \cite{convergence-problem,carmona2018probabilistic,Bertucci02122023}. Another standard notion of monotonicity is the Hilbertian monotonicity \cite{Lions-college,monotone-sol-meynard}, also referred to as displacement monotonicity \cite{disp-monotone-1,disp-monotone-2,globalwellposednessdisplacementmonotone,propagation-of-monotonicity}. In this article we rely on this latter notion of monotonicity.

Due to the difficulties in working with smooth solutions to the master equation, several notions of weak solutions have been introduced in the literature. For potential mean field games, we refer to \cite{CecchinDelarue2025}. For notions of weak solutions relying on an underlying notion of monotonicity, we refer to \cite{Bertucci02122023,cardaliaguet-monotone} in the flat monotone regime and to \cite{monotone-sol-meynard} under displacement monotonicity. Finally notions of weak solutions relying directly on the structure of the master equation were presented in \cite{lipschitz-sol,weaksol-dipmono}. Although none of these articles treat of MFGs with a major player, these last two works are the most similar to the notion of weak solution we introduce in the paper, which is largely inspired by \cite{lipschitz-sol}.

As for MFG with common noise, these models arise from the addition of a noise impacting the dynamics of all players in the same fashion \cite{mfg-with-common-noise,Lions-college}. Models in which such noise is purely additive have led to master equations of second order \cite{convergence-problem} and have been studied extensively in the literature regarding both MFGs \cite{convergence-problem,Bertucci02122023,globalwellposednessdisplacementmonotone,doi:10.1137/21M1450008} and mean field type FBSDEs \cite{probabilistic-mfg,monotone-sol-meynard}. On the other hand, common noise can also arise from an additional stochastic process, as in \cite{noise-add-variable,common-noise-in-MFG}. If, furthermore, this common noise process is controlled by another player, we refer to the problem as a mean field game with a major player \cite{MFG-MAJOR-LIONS}. In this article, we are interested in the latter problem, and rely extensively on ideas introduced in \cite{noise-add-variable,common-noise-in-MFG} for the notion of monotonicity that yields the wellposedness of our problem.

In MFGs with a major player introduced in \cite{Huang2010,Huang2012,NourianCaines2013}, each individual competes not only with the crowd of other players but also with an additional player whose decisions affect all players, hence the terminology. This major player solves another control problem, which depends on the distribution of minor players. Consequently, the solvability of major-minor MFG is that of a system of coupled optimal control problems with a mean field element. In this framework, different notion of equilibrium have been proposed depending on how the crowd interacts with the major player. In \cite{Bensoussanetal2015,Bergaultetal2024} the authors consider Stackelberg equilibria whereas in the present work we focus on Nash equilibria. Even in this framework, solutions may differ depending on whether the problem is approached with open loop controls \cite{CarmonaWang2017,CarmonaZhu2016} or closed loop controls, which is the setting of interest throughout this article. Many works have already addressed this problem \cite{probabilistic-mfg}, in particular through its master equation. First in \cite{MFG-MAJOR-LIONS} and in a series of paper \cite{CardaliaguetCirantPorretta2020,CardaliaguetCirantPorretta2023}. While they may address short time existence and uniqueness \cite{MFG-MAJOR-LIONS,CardaliaguetCirantPorretta2023}, none of those articles treats this problem over time intervals of arbitrary length. To the extent of our knowledge, the only paper considering this problem is \cite{minor-major-delarue}. They rely on a flat monotonicity assumption \cite{Lions-college,convergence-problem} on the dynamics of minor players, combined with a non degeneracy assumption on the volatility of the major player. Then, they prove that, given a terminal time $T$, there exists a volatility threshold for the major player above which the major-minor system is well posed up until $T$. Under additional structural assumptions, they establish that this threshold can be chosen independently of the time horizon $T$. In spirit, our results are quite similar in that we also rely on a volatility threshold for the major player. However, our approach is very different. First, the notions of monotonicity considered in the two articles is quite different. We rely on a joint displacement monotonicity assumption between the major and the minors players instead of flat monotonicity restricted to minor players. While it imposes stronger conditions on the dynamics of coefficients, This allows us to considerably weaken the assumptions necessary to ensure that this volatility threshold can be taken independently of the horizon of game. This stands in contrast to to \cite{minor-major-delarue} which relies on a long time decoupling assumption between the dynamics of the major and minor players.

The terminology MFGs of controls is used to indicate a class of extended MFGs \cite{extended-mfg} in which the dynamics of each minor player depend on both the distribution of states and the distribution of controls of the crowd. This class of MFGs was introduced in \cite{CardaliaguetLehalle2018}. In general, since the control of an individual player now depends on its own law, there is a need for an additional fixed point in the definition of equilibrium. Those models have been studied in the literature \cite{propagation-of-monotonicity,Kobeissi2019,Kobeissi2022} under different notions of monotonicity. Most notably, it was observed in \cite{jackson2025quantitativeconvergencedisplacementmonotone} that in the displacement monotone regime, the wellposedness of this fixed point is natural. However, none of the above articles address MFGs of controls with a major player. Setting aside the wellposedness of this problem, to the extend of our knowledge this is the first work formulating Nash equilibria in MFGs of controls with a major player. In the presence of a major player, the fixed point characteristics of MFGs of controls also depend on its state and possibly even its control. We deal with this difficulty by relying on the approach of \cite{extragradient-in-mfg} in which the fixed point is made explicit in terms of the inverse of a function on a Hilbert space. This allows us to treat MFGs of controls in the same way as more standard extended MFGs. In particular both our notion of weak solution and our long time existence theory admit straightforward extensions to MFG of controls with a major player, which we detail.

\subsection{Setting of the study}
In this article, we are interested in the solvability of the coupled system of master equations arising in mean field games with a major player 
\begin{gather}
    \nonumber \left\{
        \begin{array}{c}
            \partial_t \mathcal{U}+H(x,q,\nabla_x \mathcal{U},m)-\sigma_x \Delta_x \mathcal{U}\\
            \displaystyle +\int_{\reels^d}D_p H(y,q,\nabla_x \mathcal{U}(t,y,q,m),m)\cdot D_m \mathcal{U}(t,x,q,m)(y)m(dy)-\sigma_x \int_{\reels^d} \text{div}_y\left(D_m\mathcal{U}(t,x,q,m)(y)\right)m(dy)\\
            +D_p H^0(q,m,\nabla_q \varphi)\cdot \nabla_q \mathcal{U}-\sigma^0\Delta_q \mathcal{U}=0\\
            \forall (t,x,q,m)\in (0,T)\times \reels^d\times \reels^{d^0}\times \mathcal{P}_2(\reels^d),\\
        \end{array}
    \right.\\
   \label{eq: master system}\\
    \nonumber \left\{
        \begin{array}{c}
            \partial_t \varphi+H^0(q,\nabla_q \varphi,m)-\sigma^0 \Delta_q \varphi\\
            \displaystyle +\int_{\reels^d}D_p H(y,q,\nabla_x \mathcal{U}(t,y,q,m),m)\cdot D_m \varphi(t,q,m)(y)m(dy)-\sigma_x \int_{\reels^d} \text{div}_y\left(D_m\varphi(t,q,m)(y)\right)m(dy)=0\\
             \forall (t,q,m)\in (0,T)\times \reels^{d^0}\times \mathcal{P}_2(\reels^d),\\
        \end{array}
    \right.
\end{gather}
and its link the with existence and uniqueness of Nash equilibria. Together, those two equations describes the dynamics of both minor players and the major player along Nash equilibria. In this framework, $\mathcal{U}(t,x,q,m)$ gives the value at time $t$ for a single minor player in state $x$, whenever the state of the major player is $q$ and the distribution of all other minor players is given by $m$. Meanwhile $\varphi(t,q,m)$ gives the value for the major player in the same conditions. The fact that $\varphi$ does not depend on $x$ is a key feature of MFGs with a major player: while the crowd of players is sensitive to any change in the state of the major player, the dynamics of the latter depend on minors players only through the movements of the crowd as a whole. The two Hamiltonians $H,H^0$ correspond respectively to the control problems solved by minor players and by the major player. The main difficulty of MFGs with a major player is that these problems are strongly coupled. As a consequence standard arguments for both mean field games \cite{convergence-problem,probabilistic-mfg} and for Hamilton-Jacobi-Bellman equations on the space of probability measures \cite{DaudinSeeger2024} fail in this setting. 

Due to the non-linear nature of the problem, it is unclear whether there exist smooth solutions to the master system system \eqref{eq: master system} in general. In this article, we introduce a notion of weak solution which relies on the characteristics of the problem to solve \eqref{eq: master system}. Namely we study 
\begin{equation}
\label{intro: fbsde mfg with major}
\left\{
\begin{array}{l}
     \displaystyle X_t=X_0-\int_0^t  D_u H(X_s,q_s,U_s,\mathcal{L}(X_s|\mathcal{F}^0_s))ds+\sqrt{2\sigma}B_t, \\
     \displaystyle q_t=q_0-\int_0^t D_z H^0(q_s,Z^\varphi_s,\mathcal{L}(X_s|\mathcal{F}^0_s))ds+\sqrt{2\sigma^0}W^0_t,\\
    \displaystyle  U_t= U_T+\int_t^T D_x H(X_s,q_s,U_s,\mathcal{L}(X_s|\mathcal{F}^0_s))ds-\int_t^T
    Z_sd(B,W^0)_s,\\
\displaystyle \varphi_t= \varphi_T+\int_t^T L^0_H(q_s,Z^\varphi_s,\mathcal{L}(X_s|\mathcal{F}^0_s))ds-\sqrt{2\sigma^0}\int_t^T Z^\varphi_sdW^0_s,\\
U_T=\nabla_x \mathcal{U}(T,X_T,q_T,\mathcal{L}(X_T|\mathcal{F}^0_T)),\\
\varphi_T=\varphi(T,q_T,\mathcal{L}(X_T|\mathcal{F}^0_T)),
\end{array}
\right.
\end{equation}
where $L^0_H$ is given by 
\[L^0_H:(q,z,m)\mapsto H^0(q,z,m)-D_z H^0\cdot z,\]
and $(\mathcal{F}^0_t)_{t\geq 0}$ indicates the filtration of common noise. 
Formally if there exists a smooth solution to \eqref{eq: master system} and the volatily associated to common noise is not degenerate 
\[\sigma^0>0,\]
then $(\nabla_x \mathcal{U}, \varphi)$ is a decoupling field for the forward backward stochastic differential equation (FBSDE) \eqref{intro: fbsde mfg with major}, that is for any initial condition $(X_0,q_0)$ 
\[ \forall t\in [0,T], \quad U_t=\nabla_x \mathcal{U}(T-t,X_t,q_t,\mathcal{L}(X_t|\mathcal{F}^0_t)), \quad \varphi_t= \varphi(T-t,q_t,\mathcal{L}(X_t|\mathcal{F}^0_t)) \quad a.s.\]
Conversely if there exists a decoupling field to the FBSDE \eqref{intro: fbsde mfg with major}, then it is natural to expect that it is a solution to \eqref{eq: master system}, this is the main idea behind the notion of solution adopted in this article. Compared to the master system, one of the main advantages of studying \eqref{intro: fbsde mfg with major}, is that this system does not require nearly as much regularity of the coefficients or the solution. In particular, we show that as soon as there exists a Lipschitz decoupling field to \eqref{intro: fbsde mfg with major}, then this weak solution to the master system \eqref{eq: master system} enjoys properties of stability and uniqueness, and is also consistent with the notion of Nash equilibrium in the setting of MFGs with a major player. Hence the terminology of Lipschitz solutions. At this point, it might seem surprising that for minor players, we rely on the equation satisfied by the gradient of the value function $\nabla_x\mathcal{U}$ instead of $\mathcal{U}$ directly. As highlighted in \cite{lipschitz-sol}, this is because for MFGs the key regularity of the problem is that of $\nabla_x \mathcal{U}$, which correlates to the regularity of feedback controls for minor players. In particular, from a probabilistic point of view, looking at the stochastic characteristics \eqref{intro: fbsde mfg with major} is very natural for minor players since this corresponds to the system obtained from Pontryagin's maximum principle.

In general, instead of focusing directly on the system \eqref{intro: fbsde mfg with major}, we analyze the existence and uniqueness of decoupling fields associated to systems of the form 
\begin{equation}
\label{intro: general system}
\left\{
\begin{array}{l}
     \displaystyle X_t=X_0-\int_0^t  F(X_s,q_s,U_s,\varphi_s,Z^\varphi_s,\mathcal{L}(X_s,U_s|\mathcal{F}^0_s))ds+\sqrt{2\sigma}B_t, \\
     \displaystyle q_t=q_0-\int_0^t H_z(q_s,\varphi_s,Z^\varphi_s,\mathcal{L}(X_s,U_s|\mathcal{F}^0_s))ds+\sqrt{2\sigma^0}W^0_t,\\
    \displaystyle  U_t= U_T+\int_t^T G(X_s,q_s,U_s,\varphi_s,Z^\varphi_s,\mathcal{L}(X_s,U_s|\mathcal{F}^0_s))ds-\int_t^T
    Z_sd(B,W^0)_s,\\
\displaystyle \varphi_t= \varphi_T+\int_t^T L_H(q_s,\varphi_s,Z^\varphi_s,\mathcal{L}(X_s,U_s|\mathcal{F}^0_s))ds-\sqrt{2\sigma^0}\int_t^T Z^\varphi_sdW^0_s,\\
U_T=g(X_T,q_T,\mathcal{L}(X_T|\mathcal{F}^0_T)), \quad \varphi_T=\psi(q_T,\mathcal{L}(X_T|\mathcal{F}^0_T)),
\end{array}
\right.
\end{equation}
for Lipschitz functions, locally in the argument associated to $(Z^\varphi_t)_{t\in [0,T]}$.  This approach does not make the analysis much more technical and allows us to study a wider class of models. In particular, \eqref{intro: general system} includes the class of extended MFGs \cite{extended-mfg}. This added generality allows us to treat the case of mean field games of controls with a major player, that is MFGs in which both the dynamics of minors players and the major player depend on the law of the optimal control of minor players. We can also consider situations in which the dynamics of minor players depend on the control of the major player directly. For \eqref{eq: master system}, this means that the Hamiltonian $H$ of minor players is a function of $\nabla_q \varphi$. Finally, we will also explain how \eqref{intro: general system} includes MFGs with a major player and additive common noise \cite{convergence-problem}, by which we MFGs in which the dynamics of minor players is given by 
\[dX_t=\alpha_tdt+\sqrt{2\sigma}dB_t+\sqrt{2v^0}dB^0_t,\]
for $(B^0_t)_{t\geq 0}$ a Brownian motion shared among all minor players and possibly correlated with $(W^0_t)_{t\geq 0}$, Brownian motion of the major player (up to a non degeneracy assumption on the correlation matrix). 
\subsection{Main contributions}
Extending on results presented in \cite{lipschitz-sol}, we introduce a new notion of weak solution to MFGs with a major player. In this class of weak solutions we prove stability, uniqueness and local in time existence. These results are quite general and do not require any monotonicity assumptions, instead they rely only on the Lipschitz regularity of coefficients. Moreover, we prove that these results are also valid for extended MFGs with a major player, and in particular MFGs of controls with major player. 

Then, we present new results on the long time existence of extended MFGs with a major player under $L^2-$monotonicity. Much like \cite{minor-major-delarue} we require the volatility of the common noise associated to the major player $\sigma^0$ to be sufficiently large. Under sufficiently strong assumptions, we demonstrate that there exists a volatility threshold independent of the horizon of the game $\sigma^0_*$ such that if $\sigma^0>\sigma^0_*$ there exists a Nash equilibrium \eqref{intro: fbsde mfg with major} for the minor-major problem on $[0,T]$ for any $T>0$. We are able to treat fully coupled systems where the dynamics of both minor players and the major player can depend on the law of controls of minor players. Moreover, the existence of a threshold $\sigma^0_*$ does not rely on a decoupling between the dynamics of the minors and the major player in long time but rather on a joint monotonicity assumption between the major and minor players. 

Finally, we extend the recently introduced extragradient methods in MFGs \cite{extragradient-in-mfg} to the setting of MFGs with a major player under the monotone setting we introduced in this article. This consists in an approximating scheme for the system \eqref{intro: fbsde mfg with major} by iterating on decoupled FBSDEs. Under appropriate regularity and monotonicity assumptions, we show that the scheme achieves geometric convergence to the solution of \eqref{intro: fbsde mfg with major}.
\subsection{Organization of the paper}
In section \ref{section: MFGMP} we introduce a notion of weak solution to MFGs with a major player which is used in the rest of the article. We prove general properties of uniqueness and stability within this class of solutions. This culminates in the main existence result, Theorem \ref{thm: existence mfgmp} for Lipschitz solutions. We also present in full detail the notion of Nash equilibrium in MFGs with a major player and prove that Lipschitz solution are consistent with this notion in the Subsection \ref{subsection: Nash}. 

Then, we present a joint monotonicity condition on the dynamics of minor players and the major player, and develop a framework for the long time existence of Lipschitz solutions under this notion of monotonicity. As presented in Theorem \ref{thm: existence for sigma T mfgmp}, we assume that the intensity of the common noise $\sigma^0$ must be sufficiently large relative to the time interval considered. Then, in Theorem \ref{thm sigma independent of T}, we give additional assumptions under which it is sufficient for this volatility to be larger than a threshold independent of the horizon of the game $T$. 

In a last short section, we present a numerical method to solve the FBSDE \eqref{intro: fbsde mfg with major} by iteratively solving decoupled systems of FBSDEs. This is based on extending the concept of extragradient methods in MFGs under the particular notion of monotonicity considered in this article. Theorem \ref{thm: exponential convergence mfg} gives assumptions under which the algorithm we present achieves geometric convergence. 
\subsection{Notation}
\begin{enumerate}
\item[-] let $k\in \mathbb{N}$, $k>0$, for the canonical product on $\reels^k$ we use the notation
\[x\cdot y=\sum_i x_iy_i,\]
and the following notation for the induced norm
\[|x|=\sqrt{x\cdot x}.\]
    \item[-] Let $\mathcal{P}(\reels^d)$ be the set of (Borel) probability measures on $\reels^d$, for $q\geq 0$, we use the usual notation 
\[\mathcal{P}_q(\reels^d)=\left\{\mu\in \mathcal{P}(\reels^d), \quad \int_{\reels^d} |x|^q \mu(dx)<+\infty\right\},\]
for the set of all probability measures with a finite $q$th moment.
\item[-] For two measures $\mu,\nu\in\mathcal{P}(\reels^d)$ we define $\Gamma(\mu,\nu)$ to be the set of all probability measures $\gamma\in\mathcal{
P}(\reels^{2d})$ satisfying 
\[\gamma(A\times \reels^d)=\mu(A) \quad \gamma(\reels^d\times A)=\nu(A),\]
for all Borel set $A$ on $\reels^d$. 
\item[-]The Wasserstein distance between two measures belonging to $\mathcal{P}_q(\reels^d)$ is defined as
\[\mathcal{W}_q(\mu,\nu)=\left(\underset{\gamma\in \Gamma(\mu,\nu)}{\inf}\int_{\mathbb{\reels}^{2d}} |x-y|^q\gamma(dx,dy)\right)^{\frac{1}{q}}.\]
In what follows $\mathcal{P}_q(\reels^d)$ is always endowed with the associated Wasserstein distance, $(\mathcal{P}_q,\mathcal{W}_q)$ being a complete metric space. 
\item[-] We say that a function $U:\mathcal{P}_q(\reels^d)\to \reels^d$ is Lipschitz if
\[\exists C\geq 0, \quad \forall(\mu,\nu)\in\left(\mathcal{P}_q(\reels^d)\right)^2, \quad |U(\mu)-U(\nu)|\leq C\mathcal{W}_q(\mu,\nu).\]
\item[-] Consider $(\Omega, \mathcal{F},\mathbb{P})$ a probability space,
\begin{itemize}
    \item[-] We define 
\[L^q(\Omega, \reels^d)=\left\{ X: \Omega\to \reels^d, \quad \esp{|X|^q}<+\infty\right\}.\]
\item[-] Whenever a random variable $X:\Omega\to \reels^d$ is distributed along $\mu\in \mathcal{P}(\reels^d)$ we use equivalently the notations $\mathcal{L}(X)=\mu$ or  $X\sim\mu$. 
\end{itemize}
\item[-] $\mathcal{M}_{d\times n}(\reels)$ is the set of all matrices of size $d\times n$ with reals coefficients, with the notation $\mathcal{M}_n(\reels)\equiv \mathcal{M}_{n\times n}(\reels)$.
\end{enumerate}
\subsection{Preliminary results}
In this article, we shall consider problems associated to MFGs with common noise coming from the dynamics of a major player. To that end, we borrow the probabilistic setting of \cite{probabilistic-mfg} Section 7. Namely, we fix a horizon $T>0$ and consider a complete probability space $(\Omega^0,\mathcal{F}^0,\mathbb{P}^0)$ such that $(\mathcal{F}^0_t)_{0\leq t\leq T}$ is a complete and right continuous filtration, and there exists $(W^0_t)_{0\leq t\leq T}$ a $d^0-$dimensional $\mathcal{F}^0-$Brownian motion. Similarly we are given $(\Omega^1,\mathcal{F}^1,\mathbb{P}^1)$ a complete probability space with a complete and right continuous filtration $(\mathcal{F}^1_t)_{0\leq t\leq T}$ and endowed with a $d-$dimensional $\mathcal{F}^1-$Brownian motion $(B_t)_{0\leq t\leq T}$. Then we work on $(\Omega,\mathcal{F},\mathbb{P})$ the completion of the product space $(\Omega^0\otimes \Omega^1,\mathcal{F}^0\otimes \mathcal{F}^1,\mathbb{P}^0\otimes \mathbb{P}^1)$ endowed with the filtration $(\mathcal{F}_t)_{0\leq t\leq T}$ obtained by augmenting the product filtration $(\mathcal{F}^0_t)_{0\leq t\leq T}\otimes(\mathcal{F}^1_t)_{0\leq t\leq T}$ to make it right continuous and completing it. In particular we recall that the space $L^2(\Omega,\reels)$ endowed with the inner product 
\[\langle \cdot,\cdot \rangle: (X,Y)\mapsto \esp{XY},\]
is a Hilbert space. We use the notation $\mathcal{H}=(L^2(\Omega,\reels),\langle \cdot,\cdot\rangle)$ and the induced norm is denoted by
\[\forall X\in \mathcal{H}, \|X\|=\sqrt{\langle XX\rangle}.\]
We also use the notation $\mathcal{H}^T=(L^1([0,T],L^2(\Omega,\reels)),\langle \cdot,\cdot\rangle^T)$ for the Hilbert space of square integrable random processes on $[0,T]$ endowed with the inner product 
\[\langle X,Y\rangle^T=\esp{\int_0^T X_tY_tdt}.\]
Given a filtration $(\mathcal{G}_t)_{t\in [0,T]}$, we use the notation
\[\mathcal{H}^T_\mathcal{G}=\left\{ (\alpha_t)_{t\in [0,T]} \in \mathcal{H}^T, (\alpha_t)_{t\in [0,T]} \text{ is }\mathcal{G}-\text{adapted} \right\}.\]
Finally, consider a forward backward system of conditional McKean-Vlasov type defined on $[t_0,T_m]$ for $0\leq t_0\leq T_m\leq  T$,
\begin{equation}
\label{fbsde conditional mckean vlasov}
\left\{
\begin{array}{l}
     \displaystyle X_t=X_{t_0}-\int_{t_0}^t  f_1(X_s,q_s,U_s,\varphi_s,Z^2_s,\mathcal{L}(X_s,U_s|\mathcal{F}^0_s))ds+\sqrt{2\sigma}B_t, \\
     \displaystyle q_t=q_{t_0}-\int_{t_0}^t f_2(q_s,\varphi_s,Z^2_s,\mathcal{L}(X_s,U_s|\mathcal{F}^0_s))ds+\sqrt{2\sigma^0}W^0_t,\\
    \displaystyle  U_{t}= U_T+\int_t^{T_m} b_1(X_s,q_s,U_s,\varphi_s,Z^2_s,\mathcal{L}(X_s,U_s|\mathcal{F}^0_s))ds-\int_t^{T_m}
    Z^1_s(B,W^0)_{s},\\
\displaystyle \varphi_t =\varphi_T+\int_t^{T_m} b_2(q_s,\varphi_s,Z^2_s,\mathcal{L}(X_s,U_s|\mathcal{F}^0_s))ds-\sqrt{2\sigma^0}\int_t^{T_m} Z^2_sdW^0_{s},\\
U_T=g_1(X_{T_m},q_{T_m},\mathcal{L}(X_{T_m}|\mathcal{F}^0_{T_m})), \quad \varphi_T=g_2(q_{T_m},\mathcal{L}(X_{T_m}|\mathcal{F}^0_{T_m})).
\end{array}
\right.
\end{equation}
\begin{definition}
Let $(g_1,g_2,f_1,f_2,b_1,b_2)$ be continuous functions in all of their arguments. We say that $(X_{t_0},q_{t_0})$ is an admissible initial condition if and only if 
\begin{enumerate}
    \item[-]$(X_{t_0},q_{t_0})\in \mathcal{H}^{d+d^0}$,
    \item[-] $X_{t_0}$ is $\mathcal{F}_{t_0}-$measurable,
    \item[-] $q_{t_0}$ is $\mathcal{F}^0_{t_0}-$measurable. 
\end{enumerate}
Moreover, a tuple $(X_t,q_t,U_t,\varphi_t,Z^1_t,Z^2_t)_{t\in [t_0,T_m]}$ is said to be a strong solution to \eqref{fbsde conditional mckean vlasov}, if and only if 
\begin{enumerate}
    \item[-] $(X_t,U_t,Z^1_t)_{t\in [t_0,T_m]}$ is $\mathcal{F}-$progressively adapted.
    \item[-] $(q_t,\varphi_t,Z^2_t)_{t\in [t_0,T_m]}$ is $\mathcal{F}^0-$ progressively adapted. 
    \item[-] the processes are such that 
    \[\esp{\sup_{s\in [t_0,T]} |X_s|^2+\sup_{s\in [t_0,T_m]} |q_s|^2+\sup_{s\in [t_0,T]} |\varphi_s|^2+\sup_{s\in [t_0,T_m]} |U_s|^2+\int_{t_0}^T |(Z^1_s,Z^2_s)|^2 ds}<+\infty.\] 
    \item[-] The equation \eqref{fbsde conditional mckean vlasov} holds almost surely for any $t\in [t_0,T_m]$.
\end{enumerate}
\end{definition}

let us end this introduction with the notion of Hilbertian lifting for functions of measure introduced by Lions in \cite{Lions-college}.
\begin{definition}
    For a function $F:\reels^d\times \mathcal{P}_2(\reels^d)\to \reels^d$, we define its lift on $\mathcal{H}$, $\tilde{F}$ to be 
    \[\tilde{F}:\left\{
\begin{array}{l}
     \mathcal{H}\to \mathcal{H},  \\
     X\mapsto F(X,\mathcal{L}(X)).
\end{array}
\right.
\]
\end{definition}
Under this lifting, the space of functions $C(\reels^d\times \mathcal{P}_2(\reels^d),\reels^d)$ can be endowed with the natural notion of monotonicity on the Hilbert space $\mathcal{H}^d$
\begin{definition}
    A function $F:\reels^d\times \mathcal{P}_2(\reels^d)\to \reels^d$ is said to be $L^2-$monotone if 
    \[\forall X,Y\in \mathcal{H}^d, \quad \langle\tilde{F}(X)-\tilde{F}(Y),X-Y\rangle \geq 0.\]
\end{definition} 
This notion of monotonicity has been widely used in MFGs, ever since Pierre-Louis Lions noticed that it was a natural condition for the wellposedness of non linear transport systems on Hilbert spaces and associated extended mean field games \cite{Lions-college}. For standard MFGs, it often takes the following form 
\begin{definition}
    A function $f:\reels^d\times\mathcal{P}_2(\reels^d)\to \reels$ is said to be displacement monotone if and only if its gradient in the space variable $\nabla_x f$ is well defined and $L^2-$monotone.
\end{definition} 
\section{Lipschitz solutions}
\label{section: MFGMP}
In this section, we present the notion of solution used in this paper to describe MFGs with a major player \cite{MFG-MAJOR-LIONS}. Although this article is concerned with solutions to the major-minor system of master equations, solutions are not defined through the system of PDEs but instead through their stochastic characteristics. This allows us to present a non smooth notion of solution and weaken the regularity assumptions needed, following the approach of \cite{lipschitz-sol}. Formally, we show that as soon as the value function of the major player is Lipschitz and minor players generate Lipschitz feedback controls, there exists a unique and well-defined Nash equilibrium for the associated MFG. 

We first consider the following forward backward system
\begin{equation}
\label{mfg with major}
\left\{
\begin{array}{l}
     \displaystyle X_t=X_0-\int_0^t  F(X_s,q_s,U_s,\varphi_s,Z^\varphi_s,\mathcal{L}(X_s,U_s|\mathcal{F}^0_s))ds+\sqrt{2\sigma}B_t, \\
     \displaystyle q_t=q_0-\int_0^t H_z(q_s,\varphi_s,Z^\varphi_s,\mathcal{L}(X_s,U_s|\mathcal{F}^0_s))ds+\sqrt{2\sigma^0}W^0_t,\\
    \displaystyle  U_t= U_T+\int_t^T G(X_s,q_s,U_s,\varphi_s,Z^\varphi_s,\mathcal{L}(X_s,U_s|\mathcal{F}^0_s))ds-\int_t^T
    Z_sd(B,W^0)_s,\\
\displaystyle \varphi_t= \varphi_T+\int_t^T L_H(q_s,\varphi_s,Z^\varphi_s,\mathcal{L}(X_s,U_s|\mathcal{F}^0_s))ds-\sqrt{2\sigma^0}\int_t^T Z^\varphi_sdW^0_s,\\
U_T=g(X_T,q_T,\mathcal{L}(X_T|\mathcal{F}^0_T)), \quad \varphi_T=\psi(q_T,\mathcal{L}(X_T|\mathcal{F}^0_T)),
\end{array}
\right.
\end{equation}
This is a slightly more general version of the major-minor system of interest. Whenever $F,G$ are the gradients of some Hamiltonian and $L_H,H_z$ denote the functions defined by 
\begin{gather*}\forall (q,\varphi,z,\mu)\in \reels^{2d^0+1}\times \mathcal{P}_2(\reels^{2d}), \\
     L_H(q,\varphi,z,\mu)=H(q,\varphi,z,\mu)-D_zH(q,\varphi,z,\mu)\cdot z,\\ 
      H_z(q,\varphi,z,\mu)=D_zH(q,\varphi,z,\mu),
\end{gather*}
for 
\[(q,\varphi,z,m)\mapsto H(q,\varphi,z,m),\]
 the Hamiltonian of the major player, this system is exactly the MFG system with a major player. In this setting, $(X_t,U_t)_{t\in [0,T]}$ corresponds to the forward backward problem associated to minor players while $(q_t)_{t\in [0,T]}$ models the state of the major player and $(\varphi_t)_{t\in [0,T]}$ its value through time. Allowing general coefficients $(F,G)$ instead of restricting them to the gradient of a Hamiltonian allows us to consider a larger class of problems, including MFGs of controls but also problems outside the consideration of MFGs. 
\begin{definition}
We say that a couple $(U,\varphi):[0,T]\times \reels^d\times \reels^{d^0}\times \mathcal{P}_2(\reels^d)$ is a solution to \linebreak MFGMP$(T,\psi,g,H_z,L_H,F,G)$ iff, for any initial condition $(X_0,q_0)\in \mathcal{H}^{d+d^0}$, there exists a strong solution $(X_t,q_t,U_t,\varphi_t,Z_t,Z^\varphi_t)_{t\in [0,T]}$ to \eqref{mfg with major} such that 
\[\forall t\in [0,T], \quad \varphi_t=\varphi(T-t,q_t,\mathcal{L}(X_t|\mathcal{F}^0_t)),\quad  U_t=U(T-t,X_t,q_t,\mathcal{L}(X_t|\mathcal{F}^0_t)).\]
\end{definition}
In particular, if there exists a smooth solution to the master system associated to the major-minor problem \cite{MFG-MAJOR-LIONS}, then we expect that it to be the decoupling field associated to \eqref{mfg with major}, and hence a solution to MFGMP$(T,\psi,g,H_z,L_H,F,G)$.
\subsection{Notion of solution}
We now present in full detail the notion of solution we use in this article for the system \eqref{eq: master system} and its main properties. Although this notion relies on the forward backward system \eqref{mfg with major}, our approach is not purely probabilistic. Namely we show that there exists a decoupling field $(U,\varphi)$ solving MFGMP$(T,\psi,g,H_z,L_h,F,G)$. This allows us to avoid requiring any differentiability with respect to probability measures for our solution $(U,\varphi)$ which would be difficult to prove in our setting. The fact that the volatility associated to the common noise $\sigma^0$ is non-degenerate is essential not only for our global existence, but also for the very notion of solution we present. Since this allows us to work with merely Lipschitz coefficients (regardless of the magnitude of $\sigma^0>0$), we believe that this constitutes a regularizing effect from the common noise. 
\begin{definition}
    Let $T_c\leq T$, a couple $(U,\varphi): [0,T_c]\times \reels^d\times \reels^{d^0}\times \mathcal{P}_2(\reels^d)\to \reels^d\times \reels$ is said to be a Lipschitz solution to MFGMP$(T,\psi,g,H_z,L,F,G)$ on $[0,T_c]$ iff, 
\begin{enumerate}
    \item[-] $(U(t,\cdot),\varphi(t,\cdot))$ are Lipschitz uniformly in $t\in [0,T_c]$. 
    \item[-]  for any $t\in [0,T_c]$, for any admissible initial condition $(X_0,q_0)$, there exists a strong solution $(X_s,q_s,U_s,\varphi_s,Z_s,Z^\varphi_s)_{s\in [T-t,T]}$ to
\begin{equation}
    \label{eq: def lip sol system}
\left\{
\begin{array}{l}
     \displaystyle X_u=X_0-\int_{T-t}^u  F(X_s,q_s,U_s,\varphi_s,Z^\varphi_s,\mathcal{L}(X_s,U_s|\mathcal{F}^0_s))ds+\sqrt{2\sigma}\int_{T-t}^udB_{s}, \\
     \displaystyle q_u=q_0-\int_{T-t}^u H_z(q_s,\varphi_s,Z^\varphi_s,\mathcal{L}(X_s,U_s|\mathcal{F}^0_s))ds+\sqrt{2\sigma^0}\int_{T-t}^udW^0_{s},\\
    \displaystyle  U_u= U_T+\int_{u}^T G(X_s,q_s,U_s,\varphi_s,Z^\varphi_s,\mathcal{L}(X_s,U_s|\mathcal{F}^0_s))ds-\int_u^T
    Z_sd(B,W^0)_{s},\\
\displaystyle \varphi_u= \varphi_T+\int_u^T L(q_s,\varphi_s,Z^\varphi_s,\mathcal{L}(X_s,U_s|\mathcal{F}^0_s))ds-\sqrt{2\sigma^0}\int_0^t Z^\varphi_sdW^0_{s},\\
U_T=g(X_T,q_T,\mathcal{L}(X_T|\mathcal{F}^0_T)), \quad \varphi_T=\psi(q_T,\mathcal{L}(X_T|\mathcal{F}^0_T)),
\end{array}
\right.
\end{equation}
such that
    \[\exists C>0, \quad |Z^\varphi_s|<C, a.e \text{ for } s\in [T-t,T]\quad  a.s.\]
and the following representation holds 
    \[\forall s\in [T-t,T],\quad \varphi(T-s,q_{s},\mathcal{L}(X_{s}|\mathcal{F}^0_s))=\varphi_{s}, U(T-s,X_s,q_s,\mathcal{L}(X_s|\mathcal{F}^0_s))=U_{s}, \quad a.s.\]
Moreover, for any $x\in \reels^d$ there exists a strong solution $(X^x_s,U^x_s,Z^x_s)_{s\in [T-t,T]}$  to
\begin{equation}
    \label{eq: caracteristics in x fbsde with major}
    \left\{\begin{array}{l}
    \displaystyle X^x_u= x-\int_{T-t}^u F(X^x_s,q_s,U^x_s,\varphi_s,Z^\varphi_s,\mathcal{L}(X_s,U_s|\mathcal{F}^0_s))ds+\sqrt{2\sigma}\int_u^TdB_{s},\\
\displaystyle U^{x}_u=U^x_T+\int_u^T G(X^x_s,q_s,U^x_s,\varphi_s,Z^\varphi_s,\mathcal{L}(X_s,U_s|\mathcal{F}^0_s))-\int_0^t Z^x_s d(B,W^0)_{s},\\
U^x_T=g(X^x_T,q_T,\mathcal{L}(X_T|\mathcal{F}^0_T)),
\end{array}\right.\end{equation}
and the following holds 
\[U^x_s=U(T-s,X^x_s,\mathcal{L}(X_s)), a.s.\]
\end{enumerate}
Finally, $(U,\varphi): [0,T_c)\times \reels^d\times \reels^{d^0}\times \mathcal{P}_2(\reels^d)\to \reels^d\times \reels$ is said to be a Lipschitz solution to \linebreak MFGMP$(T,\psi,g,H_z,L,F,G)$ on $[0,T_c)$ if it is a Lipschitz solution on $[0,t]$ for any $t<T_c$.
\end{definition}
\begin{remarque}
Let us emphasize that $x\in \reels^d$ does not necessarily belong to the support of $X_0$. As such, the additional requirement on the finite dimensional characteristics \eqref{eq: caracteristics in x fbsde with major} appears necessary to ensure that, for a given measure $\mu\in \mathcal{P}_2(\reels^d)$, the function $U(\cdot,\mu)$ is well defined, not just as a function on $[0,T]\times \text{Supp}(\mu)\times \reels^{d^0}$ but on the whole space $\reels^d$.
\end{remarque}
The fact that Lipschitz solutions do not have to be defined on the entire interval $[0,T]$ is quite important. As is standard in the theory of FBSDEs \cite{probabilistic-mfg}, we are going to construct a solution iteratively over small time intervals. To that end we first prove the following lemma
\begin{lemma}
\label{lemma: bootstraping lip sol}
Let $T_c\leq T$ and $(U,\varphi):[0,T_c)\times \reels^d\times \reels^{d^0}\times\mathcal{P}_2(\reels^d)\to \reels^d\times \reels$ be a couple of Lipschitz functions uniformly on $[0,t]$ for any $t<T_c$. Let $\tau\in (0,T_c)$, then $(U,\varphi)$ is a Lipschitz solution to MFGMP$(T,\psi,g,H_z,L,F,G)$ on $[0,T_c)$ if and only if the restriction of $(U,\varphi)$ to $[0,\tau]$ is a Lipschitz solution to MFGMP$(T,\psi,g,H_z,L,F,G)$ on $[0,\tau]$, and the function
\[(U,\varphi)|_{[\tau,T_c]}:(t,x,q,\mu)\mapsto (U,\varphi)(\tau+t,x,q,\mu),\]
is a Lipschitz solution to MFGMP$(T-\tau,\varphi(\tau,\cdot),U(\tau,\cdot),H_z,L,F,G)$ on $[0,T_c-\tau)$.
\end{lemma}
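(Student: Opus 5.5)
The proof is a concatenation/restriction argument on the stochastic characteristics, which works because the definition of Lipschitz solution is local in the backward time variable. We read time $t$ in $(U,\varphi)(t,\cdot)$ as \emph{time to maturity}: $U(T-s,\cdot)$ is evaluated at physical time $s$. We also read the shifted problem MFGMP$(T-\tau,\varphi(\tau,\cdot),U(\tau,\cdot),H_z,L,F,G)$ as the problem on the physical interval $[0,T-\tau]$ (equivalently $[\tau',T]$ after a time shift) with terminal data $g=U(\tau,\cdot)$ and $\psi=\varphi(\tau,\cdot)$. The coefficients are time homogeneous and the Brownian increments are stationary, so translating physical time by $\tau$ leaves the equations \eqref{eq: def lip sol system} and \eqref{eq: caracteristics in x fbsde with major} invariant in law. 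Admissibility of initial conditions, with respect to the shifted filtrations, is preserved as well.

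\emph{Direct implication.} Assume $(U,\varphi)$ is a Lipschitz solution on $[0,T_c)$. The restriction to $[0,\tau]$ is immediate from the definition, since it only requires the properties for $t\le\tau$. For the shifted function, fix $t<T_c-\tau$ and an admissible initial condition at physical time $T-\tau-t$ (for the shifted horizon). Apply the hypothesis with maturity $t+\tau<T_c$. This gives a strong solution on $[T-\tau-t,T]$ with the representation $U_s=U(T-s,X_s,q_s,\mathcal{L}(X_s|\mathcal{F}^0_s))$ and the analogous one for $\varphi$. Restrict it to $[T-\tau-t,T-\tau]$. At $s=T-\tau$ the representation gives $U_{T-\tau}=U(\tau,X_{T-\tau},q_{T-\tau},\mathcal{L}(X_{T-\tau}|\mathcal{F}^0_{T-\tau}))$ and $\varphi_{T-\tau}=\varphi(\tau,\dots)$, which are exactly the new terminal conditions. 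The restricted process therefore solves the shifted system, after translating time by $\tau$. The bound on $Z^\varphi$, the representation, and the finite dimensional characteristics $X^x$ restrict in the same way, and the uniform Lipschitz bounds are inherited.

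\emph{Converse.} Fix $t<T_c$. If $t\le\tau$ there is nothing to prove, so assume $t>\tau$ and take an admissible initial condition at time $T-t$. First use the shifted solution with maturity $t-\tau$ to solve on $[T-t,T-\tau]$, with terminal data $U(\tau,\cdot)$ and $\varphi(\tau,\cdot)$ evaluated at $(X_{T-\tau},q_{T-\tau},\mathcal{L}(X_{T-\tau}|\mathcal{F}^0_{T-\tau}))$. The pair $(X_{T-\tau},q_{T-\tau})$ is an admissible initial condition at time $T-\tau$: $X_{T-\tau}$ is $\mathcal{F}_{T-\tau}$-measurable, $q_{T-\tau}$ is $\mathcal{F}^0_{T-\tau}$-measurable, and both are square integrable. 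So the solution on $[0,\tau]$ yields a strong solution on $[T-\tau,T]$ with the representation. Its values at $T-\tau$ are $U(\tau,X_{T-\tau},\dots)$ and $\varphi(\tau,\dots)$, which agree with the terminal values used on the first piece. Concatenate the two pieces: the forward components match at $T-\tau$ by construction and the backward components match by the representation. The concatenated process is then progressively measurable with respect to the right filtrations, satisfies the integral equations on the whole interval (by additivity of the integrals), satisfies the integrability and $|Z^\varphi|\le\max(C_1,C_2)$ bounds, and satisfies the representation for every $s$. For $x\in\reels^d$ the $X^x$ characteristics are handled identically: on the second piece the data is the random, $\mathcal{F}_{T-\tau}$-measurable point $X^x_{T-\tau}$ rather than a deterministic $x$.

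The main obstacle is this last point. The definition only provides the second-piece characteristics for deterministic $x$, while concatenation requires starting from the random variable $X^x_{T-\tau}$. I would first try to avoid this by treating $(X^x_{T-\tau},q_{T-\tau})$ directly as an admissible initial condition. The first item of the definition of a Lipschitz solution on $[0,\tau]$ applies to the pair $(X^x,U^x)$ once the conditional law flow $\mathcal{L}(X_s,U_s|\mathcal{F}^0_s)$ of the main system is frozen as an exogenous input, since that system is then a standard FBSDE with Lipschitz coefficients. If this fails, I would fall back on solving the frozen FBSDE for each deterministic $x$ and plugging in $x=X^x_{T-\tau}$ by a measurable selection, using the Lipschitz continuity of $U(\tau,\cdot)$ and stability of the frozen FBSDE in $x$ to justify the substitution.
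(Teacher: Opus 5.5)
Your argument is the paper's. For the direct implication you restrict the characteristics and read off the new terminal data from the representation at $T-\tau$. For the converse you solve on $[T-t,T-\tau]$ with the shifted solution, restart from the admissible pair $(X_{T-\tau},q_{T-\tau})$ with the solution on $[0,\tau]$, and glue the backward components at $T-\tau$ through the representation. The paper never discusses the characteristics \eqref{eq: caracteristics in x fbsde with major}; it only says the definition is ``naturally'' satisfied. Your treatment of them therefore goes beyond its proof.

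On that point, your first idea does not work. The first item of the definition concerns the McKean--Vlasov system. Started from $(X^x_{T-\tau},q_{T-\tau})$, that system generates its own $q$, $\varphi$, $Z^\varphi$ and conditional law flow, instead of freezing those of the main solution as \eqref{eq: caracteristics in x fbsde with major} requires. Moreover, the frozen system is still fully coupled through $F(\cdot,U^x,\cdot)$, so Lipschitz coefficients alone give no existence on an interval of length $\tau$.

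Your fallback is the right route, and it is cleanest without a pathwise substitution. Take $\xi=\sum_k x_k\mathds{1}_{A_k}$ for a finite partition $(A_k)\subset\mathcal{F}_{T-\tau}$. Then $\sum_k\mathds{1}_{A_k}(X^{x_k},U^{x_k},Z^{x_k})$ solves the frozen system from $\xi$ with the representation: all other inputs are common to every $k$, and the indicators commute with the integrals on $[T-\tau,T]$. For a general $\mathcal{F}_{T-\tau}$-measurable $\xi\in\mathcal{H}^d$, in particular $\xi=X^x_{T-\tau}$, pass to the limit. By the representation, the forward drift is $F\big(X,q_s,U(T-s,X,q_s,\mathcal{L}(X_s|\mathcal{F}^0_s)),\varphi_s,Z^\varphi_s,\mathcal{L}(X_s,U_s|\mathcal{F}^0_s)\big)$. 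This is Lipschitz in $X$ because $Z^\varphi$ is bounded, and the $Z^x$ components then converge by the usual BSDE estimate. Note that this stability step needs Hypothesis \ref{hyp: locally lipschitz in z}. The lemma does not state that hypothesis, but it is in force wherever the lemma is used.
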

\begin{proof}
    By definition of Lipschitz solutions, it is straightforward that if $(U,\varphi)$ is a Lipschitz solution its restriction to $[\tau,T_c)$ for any $\tau\in (0,T_c)$, $(U,\varphi)|_{[\tau,T_c]}$ is a Lipschitz solution to MFGMP$(T-\tau,\varphi(\tau,\cdot),U(\tau,\cdot),H_z,L,F,G)$. Thus, the first implication follows naturally. 
    
    On the other hand, let us assume that for some $\tau\in (0,T_c)$, $(U,\varphi)$ is a Lipschitz solution to MFGMP$(\psi,g,H_z,L,F,G)$ on $[0,\tau]$ and $(U,\varphi)|_{[\tau,T_c]}$ is a Lipschitz solution to \linebreak MFGMP$(T-\tau,\varphi(\tau,\cdot),U(\tau,\cdot),H_z,L,F,G)$ on $[0,T_c-\tau]$. Let us fix $t\in (\tau,T_c)$,and  an admissible initial condition $(X_0,q_0)$. Using the fact that  $(U,\varphi)|_{[\tau,T_c]}$ is a Lipschitz solution, we can construct a strong solution to 
   \begin{equation*}
\left\{
\begin{array}{l}
     \displaystyle X_{u}=X_0-\int_{T-t}^{u}  F(X_s,q_s,U_s,\varphi_s,Z^\varphi_s,\mathcal{L}(X_s,U_s|\mathcal{F}^0_s))ds+\sqrt{2\sigma}B_u, \\
     \displaystyle q_{u}=q_0-\int_{T-t}^{u} H_z(q_s,\varphi_s,Z^\varphi_s,\mathcal{L}(X_s,U_s|\mathcal{F}^0_s))ds+\sqrt{2\sigma^0}W^0_u,\\
    \displaystyle  U_u= U(\tau,X_{T-\tau},q_{T-\tau},\mathcal{L}(X_{T-\tau}|\mathcal{F}^0_{T-\tau}))+\int_u^{T-\tau} G(X_s,q_s,U_s,\varphi_s,Z^\varphi_s,\mathcal{L}(X_s,U_s|\mathcal{F}^0_s))ds-\int_u^{T-\tau}
    Z_sd(B,W^0)_{s},\\
\displaystyle \varphi_u= \varphi(\tau, q_{T-\tau},\mathcal{L}(X_{T-\tau}|\mathcal{F}^0_{T-\tau}))+\int_u^{T-\tau} L(q_s,\varphi_s,Z^\varphi_s,\mathcal{L}(X_s,U_s|\mathcal{F}^0_s))ds-\sqrt{2\sigma^0}\int_u^{T-\tau} Z^\varphi_sdW^0_{s},
\end{array}
\right.
\end{equation*}
Using the fact that $(U,\varphi)$ is a Lipschitz solution to MFGMP$(T,\psi,g,H_z,L,F,G)$ on $[0,\tau]$, we can build a strong solution $(\tilde{X}_s,\tilde{q}_s,\tilde{\varphi}_s,\tilde{U}_s,\tilde{Z}_s,\tilde{Z^\varphi}_s)_{s\in [T-\tau,T]}$ to \eqref{eq: def lip sol system} with initial condition $(X_{T-\tau},q_{T-\tau})$. Indeed the admissibility of $(X_{T-\tau},q_{T-\tau})$ for \eqref{eq: def lip sol system} follows easily from the admissibility of $(X_0,q_0)$ and the definition of Lipschitz solutions. Letting 
\[\forall s\in [T-t,T], \quad \bar{X}_s=\left\{
\begin{array}{c}
    X_s \text{ if } s< T-\tau,\\
    \tilde{X}_{s} \text{ if } s\geq  T-\tau
\end{array}    
\right. ,\]
we define $(\bar{q}_s,\bar{\varphi}_s,\bar{U}_s,\bar{Z}_s,\bar{Z}^\varphi_s)_{s\in [T-t,T]}$ in a similar fashion. We now check easily that this is a strong solution to \eqref{eq: def lip sol system} on $[T-t,T]$ with initial condition $(X_0,q_0)$. for any $s\in [T-t,T]$, The compatibility condition 
\[(\bar{U}_{s^-},\bar{\varphi}_{s^-})=(\bar{U}_s,\bar{\varphi}_s) \quad a.s,\]
is satisfied since it is satisfied by $(\bar{X}_s,\bar{q}_s)$ and 
\[(\bar{U}_s,\bar{\varphi}_{s})=(U,\varphi)(T-s,\bar{X}_s,\bar{q}_s,\mathcal{L}(\bar{X}_s|\mathcal{F}^0_s)).\]
Naturally, the definition of Lipschitz solution is also satisfied. Since this is true for any initial condition and for any $t\in (\tau,T_c)$, $(U,\varphi)$ is indeed a Lipschitz solution to MFGMP$(T,\psi,g,H_z,L,F,G)$ on $[0,T_c)$.
\end{proof}
At this point we have not made any assumptions on the coefficients of the problem, as the existence of a strong solution to \eqref{eq: def lip sol system} is assumed directly in the definition of a Lipschitz solution. Obviously we are going to need appropriate regularity assumptions to prove that such a solution exists. 
\begin{hyp}
    \label{hyp: locally lipschitz in z}
    \begin{enumerate}
        \item[-] The volatility associated to common noise is such that $\sigma^0>0$
    \item[-] There exists a constant $C_{coef}>0$ such that
    \begin{enumerate}
        \item[-] $\psi,g$ are Lispchitz with 
        \[\|\psi\|_{Lip},\|g\|_{Lip}\leq C_{coef}.\]
        \item[-] There exists an increasing function $\omega:\reels\to \reels^+$ such that for $f=F,G,L,H_z$ the following holds
        \begin{gather*}
            \forall (x,q,u,z,\mu),(y,q',v,z',\nu)\in \reels^{2(d+d^0)}\times \mathcal{P}_2(\reels^{2d}),\\
            |f(x,q,u,z,\mu)-f(y,q',v,z',\nu)|\leq (C_{coef}+\omega(|z|\wedge |z'|))\left(|x-y|+|q-q'|+|u-v|+|z-z'|+\mathcal{W}_2(\mu,\nu)\right).
        \end{gather*}
    \end{enumerate}
\end{enumerate}
\end{hyp}
This assumption is quite natural for the coefficients of MFGs with a major player. Indeed, the Lagrangian of the major player should, at least, be allowed to have quadratic growth in $z$ to handle quadratic penalization on controls. However there is an additional difficulty arising from considering a backward SDE with quadratic growth or higher. For FBSDEs without a mean field component, a standard way of dealing with this nonlinearity is to use Malliavin calculus \cite{tangpi} to show that there exists a constant $M>0$ such that 
\[ \forall t\in [0,T], \quad |Z^\varphi_t|\leq M\quad a.s.\]
However due to structure of our problem this approach is not particularly suitable. Instead we maintain the idea of bounding $(Z^\varphi_t)_{t\in [0,T]}$ uniformly but we rely on the decoupling field of the problem. If there exists a smooth solution to the master system \cite{MFG-MAJOR-LIONS}, then we expect that 
\[\forall t\in [0,T], \quad Z^\varphi_t=\nabla_q \varphi(T-t,q_t,\mathcal{L}(X_t|\mathcal{F}^0_t)).\]
In particular, so long as $\varphi$ is Lipschitz then we expect $(Z^\varphi_t)_{t\in [0,T]}$ to be bounded by its Lipschitz constant. We now formalize this idea for merely Lipschitz decoupling fields
\begin{lemma}
    \label{lemma: Z bound by lip}
    Suppose Hypothesis \ref{hyp: locally lipschitz in z} holds and that there exists $(U,\varphi)$ a Lipschitz solution to \linebreak MFGMP$(T,\psi,g,H_z,L,F,G)$ on $[0,T_c]$ for some $T_c\leq T$, then for any $t\in [0,T_c]$ and for any admissible initial condition $(X_0,q_0)$, letting $(X_s,q_s,U_s,\varphi_s,Z_s,Z^\varphi_s)_{s\in [T-t,T]}$ be a strong solution of \eqref{eq: def lip sol system} associated to $(U,\varphi)$ then 
    \begin{enumerate}
        \item[-] $(Z^\varphi_s)_{s\in [T-t,T]}\in L^\infty([T-t,T], L^\infty(\Omega,\reels^{d^0}))$
        \item[-] $|Z^\varphi_s|\leq  \|\nabla_q \varphi(T-s,\cdot)\|_\infty \quad a.e \text{ for } s\in [T-t,T]\quad a.s. $
    \end{enumerate}
\end{lemma}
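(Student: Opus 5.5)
The plan is to identify $Z^\varphi$ through its covariation with the common noise and to estimate that covariation with difference quotients, using only the Lipschitz regularity of $\varphi$ and the representation $\varphi_s=\varphi(T-s,q_s,\mathcal{L}(X_s|\mathcal{F}^0_s))$. Write $m_s=\mathcal{L}(X_s|\mathcal{F}^0_s)$, $\Delta W=W^0_{s+h}-W^0_s$ and $\Delta B=B_{s+h}-B_s$.

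From the backward equation for $(\varphi_s)$ and Itô's isometry, for a.e. $s$ and any bounded $\mathcal{F}^0_s$-measurable test variable $\xi$,
\[
\sqrt{2\sigma^0}\,\mathbb{E}\big[\xi Z^\varphi_s\big]=\lim_{h\to 0}\frac1h\,\mathbb{E}\big[\xi\,(\varphi_{s+h}-\varphi_s)\,\Delta W\big].
\]
To make this rigorous, I first integrate against $\xi\in\mathcal{H}^T_{\mathcal{F}^0}$ over $s$ and then apply a Lebesgue differentiation argument. The drift contribution $\int_s^{s+h}L\,du$ vanishes in the limit because $\mathbb{E}\int|L|^2<\infty$. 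Since $\varphi_s$ is $\mathcal{F}_s$-measurable and $\mathbb{E}[\Delta W|\mathcal{F}_s]=0$, the term $\varphi_s$ drops out. It therefore suffices to show that $\frac1h\mathbb{E}[\varphi_{s+h}\Delta W|\mathcal{F}_s]$ is, asymptotically and in $L^1(ds\otimes d\mathbb{P})$, bounded in norm by $\sqrt{2\sigma^0}\,\|\nabla_q\varphi(T-s-h,\cdot)\|_\infty$ plus a vanishing error.

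Next I decompose $\varphi_{s+h}=\varphi(T-s-h,q_{s+h},m_{s+h})$ by freezing everything that is not the direct Gaussian increment. Set $\hat q=q_s+\sqrt{2\sigma^0}\Delta W$ and $\hat m=\mathcal{L}(X_s+\sqrt{2\sigma}\,\Delta B\,|\,\mathcal{F}^0_s)$. The measure $\hat m$ is $\mathcal{F}^0_s$-measurable: it is obtained from $m_s$ by convolution with a Gaussian, because $\Delta B$ is independent of $\mathcal{F}^0\vee\mathcal{F}_s$. Using the uniform Lipschitz bound $L_\varphi$ on $\varphi(t,\cdot)$,
\[
|\varphi_{s+h}-\varphi(T-s-h,\hat q,\hat m)|\le L_\varphi\Big(\Big|\int_s^{s+h}H_z\,du\Big|+\mathcal{W}_2(m_{s+h},\hat m)\Big),
\]
and by the obvious coupling $\mathcal{W}_2(m_{s+h},\hat m)\le \mathbb{E}\big[|\int_s^{s+h}F\,du|^2\,\big|\,\mathcal{F}^0_{s+h}\big]^{1/2}$.

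This is the crucial point, and it is the step I expect to be the main obstacle. The common noise affects the conditional law only through the drift, so $m_{s+h}$ differs from the $\mathcal{F}^0_s$-measurable $\hat m$ by $O(h)$ in $L^2$, not by $O(\sqrt h)$. A naive comparison of $m_{s+h}$ with $m_s$ gives an $O(\sqrt h)$ error, which after division by $h$ and multiplication by $|\Delta W|\sim\sqrt h$ would not vanish. With $\hat m$ instead, Cauchy--Schwarz bounds the error contribution by
\[
h^{-1}\,\big\|\textstyle\int_s^{s+h}(|F|+|H_z|)\big\|_{L^2}\,\sqrt h .
\]
After integration in $s$, this is controlled by $\sqrt h\,\big(\mathbb{E}\int_0^T(|F|^2+|H_z|^2)\big)^{1/2}\to0$. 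That integral is finite by Hypothesis~\ref{hyp: locally lipschitz in z} together with the a priori boundedness of $Z^\varphi$ built into the definition of Lipschitz solution; the bound is needed because $\omega$ may grow.

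Finally, conditionally on $\mathcal{F}_s$, the variable $\varphi(T-s-h,q_s+\sqrt{2\sigma^0}\Delta W,\hat m)$ is a Lipschitz function of the Gaussian vector $\Delta W$ alone, since $q_s$ and $\hat m$ are frozen. Gaussian integration by parts, justified via Rademacher's theorem for Lipschitz functions, gives
\[
\mathbb{E}\big[\varphi(\cdot,\hat q,\hat m)\Delta W\,\big|\,\mathcal{F}_s\big]=h\sqrt{2\sigma^0}\,\mathbb{E}\big[\nabla_q\varphi(T-s-h,\hat q,\hat m)\,\big|\,\mathcal{F}_s\big].
\]
The norm of this is at most $h\sqrt{2\sigma^0}\,\|\nabla_q\varphi(T-s-h,\cdot)\|_\infty$. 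Dividing by $h\sqrt{2\sigma^0}$ and letting $h\to0$, we get $|\mathbb{E}[\xi\cdot Z^\varphi_s]|\le \limsup_h\|\nabla_q\varphi(T-s-h,\cdot)\|_\infty\,\mathbb{E}|\xi|$ for every test $\xi$. Duality then gives $|Z^\varphi_s|\le\|\nabla_q\varphi(T-s,\cdot)\|_\infty$ a.e. in $s$, a.s. Passing from $T-s-h$ to $T-s$ is harmless up to a.e. $s$ by averaging in $s$. In any case, the bound by the uniform Lipschitz constant yields the first claim, $Z^\varphi\in L^\infty$.
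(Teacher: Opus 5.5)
Your proof is correct and is essentially the paper's argument. You identify $Z^\varphi$ through the covariation of $(\varphi_s)$ with $W^0$, discard the $\mathcal{F}_s$-measurable terms, and compare $\mathcal{L}(X_{s+h}|\mathcal{F}^0_{s+h})$ with the $\mathcal{F}^0_s$-measurable Gaussian convolution of $\mathcal{L}(X_s|\mathcal{F}^0_s)$. That comparison keeps the measure error at $O(h)$, which is exactly how the paper controls $J_n$.

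Your two departures are refinements rather than a new route:

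\begin{enumerate}
\item Integrating the difference quotients in $s$ replaces the paper's Riemann sums $I_n$. This makes both the passage to the limit and the shift from $T-s-h$ to $T-s$ rigorous.
\item Gaussian integration by parts in the last step gives the sharp constant $\|\nabla_q\varphi(T-s,\cdot)\|_\infty$. The paper's absolute-value bound $|\varphi(\cdot,q_{t_{i+1}},\cdot)-\varphi(\cdot,q_{t_i},\cdot)|\,|A_{t_i}|\,|W^0_{t_{i+1}}-W^0_{t_i}|$ only yields it up to a factor depending on $d^0$.
\end{enumerate}
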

\begin{proof}
We may assume $T_c=T=t$, since the proof does not change significantly depending on the time interval considered. 
Consider $(A_t)_{t\in [0,T]}\in L^\infty([0,T]L^\infty(\Omega,\reels^{d^0}))$ a bounded $\mathcal{F}-$progressively adapted process on $[0,T]$. By definition of $(\varphi_s)_{s\in [0,T]}$, 
\[\forall t\leq T, \quad \langle \varphi_\cdot,\int_0^\cdot A_s dW^0_s\rangle_t=\sqrt{2\sigma^0}\int_0^t Z^\varphi_s A_s ds,\]
where $\langle \cdot,\cdot \rangle_t$ indicates the covariation on $[0,t]$. Let us now estimate 
\[I_n=\esp{\sum_{i<n} (\varphi_{t_{i+1}}-\varphi_{t_i})(A_{t_i} (W^0_{t_{i+1}}-W^0_{t_i}))},\]
for a given subdivision $(t_1,\cdots t_n)$ of $[0,T]$.
\begin{align*}
I_n&=\esp{\sum_{i<n} (\varphi(t_{i+1},q_{t_{i+1}},\mathcal{L}(X_{t_{i+1}}|\mathcal{F}_{t_{i+1}}))-\varphi(t_i,q_{t_i},\mathcal{L}(X_{t_i}|\mathcal{F}_{t_i})(A_{t_i} (W^0_{t_{i+1}}-W^0_{t_i}))}\\
&=\esp{\sum_{i<n} \espcond{(\varphi(t_{i+1},q_{t_{i+1}},\mathcal{L}(X_{t_{i+1}}|\mathcal{F}_{t_{i+1}}))-\varphi(t_{i+1},q_{t_i},\mathcal{L}(X_{t_i}|\mathcal{F}_{t_i}))(A_{t_i} (W^0_{t_{i+1}}-W^0_{t_i}))}{\mathcal{F}_{t_i}}},
\end{align*}
where we used the fact that $W^0_{t_{i+1}}-W^0_{t_i}$ is independent of $\mathcal{F}_{t_i}$ and that $(A_{t_i},q_{t_i})$ are $\mathcal{F}_{t_i}-$measurable. Next we observe the following
\[X_{t_{i+1}}=X_{t_i}+(t_{i+1}-t_i)\Delta X_i+\Delta B_i,\]
with 
\[\Delta B_i= \sqrt{2\sigma }(B_{t_{i+1}}-B_{t_i}), \quad \Delta X_i=-\frac{1}{t_{i+1}-t_i}\int_{t_i}^{t_{i+1}}F(X_s,q_s,U_s,Z^\varphi_s,\mathcal{L}(X_s,U_s|\mathcal{F}^0_s))ds.\]
Using the fact that 
\[\mu\mapsto \varphi(t,q,\mu),\]
is Lipschitz uniformly in the other variable with constant $C^\varphi_\mathcal{P}$ we deduce that
\begin{gather*}
    I_n\leq \left|\esp{\sum_{i<n} \espcond{(\varphi(t_{i+1},q_{t_{i+1}},\mathcal{L}(X_{t_{i}}+\Delta B_i|\mathcal{F}_{t_{i+1}}))-\varphi(t_{i+1},q_{t_i},\mathcal{L}(X_{t_i}|\mathcal{F}_{t_i}))(A_{t_i} (W^0_{t_{i+1}}-W^0_{t_i}))}{\mathcal{F}_{t_i}}}\right|\\
    +\underbrace{C^\varphi_\mathcal{P}\esp{\sum_{i<n}(t_{i+1}-t_i)|A_{t_i}|\espcond{|\Delta X_i||W^0_{t_{i+1}}-W^0_{t_i}|}{\mathcal{F}_{t_i}}}}_{J_n}
\end{gather*}
Letting $\mu_{t_i}=\mathcal{L}(X_{t_i}|\mathcal{F}_{t_i})$, the following holds
\[\mathcal{L}(X_{t_i}+\Delta B_i|\mathcal{F}_{t_{i+1}})=\mu_{t_i}\star g^\sigma_{t_{i+1}-t_i},\]
where for $h>0$,$g^\sigma_h$ indicates the density given by
\[g^\sigma_h(x)=\frac{1}{\sqrt{(4\pi \sigma)^d }}\exp\left( -\frac{1}{2\sqrt{2\sigma }}\|x\|^2\right).\]
In particular this shows that $\mathcal{L}(X_{t_i}+\Delta B_i|\mathcal{F}_{t_{i+1}})$ is $\mathcal{F}_{t_i}-$ measurable. Treating this dependency as we did for the time dependency, we deduce that 
\[I_n\leq \esp{\sum_{i<n} \espcond{|\varphi(t_{i+1},q_{t_{i+1}},\mu_{t_i}\star g^\sigma_{t_{i+1}-t_i})-\varphi(t_{i+1},q_{t_i},\mu_{t_i}\star g^\sigma_{t_{i+1}-t_i})||A_{t_i}| |W^0_{t_{i+1}}-W^0_{t_i}|}{\mathcal{F}_{t_i}}}+J_n.\]
Moreover, letting 
\[\|A\|_\infty= \inf \{C>0, \quad \forall s\in [0,T], \quad |A_s|\leq C \quad a.s.\},\]
which is finite by assumption the following holds independently of the subdivision considered
\begin{align*}
J_n&\leq C^\varphi_\mathcal{P}\esp{\sum_{i<n}(t_{i+1}-t_i)^{\frac{3}{2}}|A_{t_i}|\espcond{|\Delta X_i|\frac{|W^0_{t_{i+1}}-W^0_{t_i}|}{\sqrt{t_{i+1}-t_i}}}{\mathcal{F}_{t_i}}},\\
&\leq \|A\|_\infty C^\varphi_\mathcal{P}\sup_i \sqrt{t_{i+1}-t_i}\left(T+\sum_{i<n}(t_{i+1}-t_i)\esp{|\Delta X_i|^2}\right).\\
\end{align*}
It remains to estimate 
\[\esp{|\Delta X_i|^2}.\]
By definition of Lipschitz solution $(Z^\varphi_s)_{s\in [0,T]}\in L^\infty([0,T],L^\infty(\Omega,\reels^{d^0}))$, letting $\|Z^\varphi_\cdot\|_\infty$ indicates the norm of $(Z^\varphi_s)_{s\in [0,T]}$ in this space, by Hypothesis \ref{hyp: locally lipschitz in z} 
\begin{align*}
\esp{|\Delta X_i|^2}&\leq \frac{1}{(t_{i+1}-t_i)^2}\esp{\left(\int_{t_i}^{t_{i+1}}\left(C_{coef}+\omega(|Z^\varphi_s|)\right)\left(1+|X_s|+|q_s|+|U_s|+\espcond{\sqrt{|X_s|^2}+\sqrt{|U_s|^2}}{\mathcal{F}^0_s}\right)ds\right)^2}\\
&\leq \frac{\left(C_{coef}+\omega(\|Z^\varphi_\cdot\|_\infty)\right)^2}{(t_{i+1}-t_i)^2}\esp{\left(\int_{t_i}^{t_{i+1}}\left(1+|X_s|+|q_s|+|U_s|+\espcond{\sqrt{|X_s|^2}+\sqrt{|U_s|^2}}{\mathcal{F}^0_s}\right)ds\right)^2}\\
&\leq \left(C_{coef}+\omega(\|Z^\varphi_\cdot\|_\infty)\right)^2\esp{C(d,d^0)+\sup_{[0,T]}|X_s|^2+\sup_{[0,T]}|U_s|^2+\sup_{[0,T]}|q_s|^2},
\end{align*}
where $C(d,d^0)$ is a constant depending on $d$ and $d^0$ only. 

As a consequence, there exists a constant depending on the data of the problem such that but independent of the subdivision such that
\[|J_n|\leq C_{data}\sup_i\sqrt{t_{i+1}-t_i},\]
taking first the supremum on subdivision of $[0,T]$ with $n$ elements, and then the limit as $n$ tends to infinity we deduce that for any bounded $\mathcal{F}-$adapted process $(A_s)_{s\in [0,T]}$,
\[\esp{\int_0^T Z^\varphi_t \cdot A_t dt}\leq \esp{\int_0^T \|\nabla_q\varphi(T-s,\cdot)\|_\infty |A_s|ds}.\]
Letting 
\[\forall t\in [0,T], \for 1\leq i \leq d \quad A^i_t=\xi_t (\mathds{1}_{Z^\varphi_t\geq 0}-\mathds{1}_{Z^\varphi_t\leq0})\]
for some $(\xi_t)_{t\in [0,T]}\in \mathcal{H}^T_\mathcal{F}\cap L^\infty([0,T],L^\infty(\Omega,\reels^{d^0}))$, we get 
\[\esp{\int_0^T |Z^\varphi_t| \xi_t dt}\leq \esp{\int_0^T \|\nabla_q\varphi(T-s,\cdot)\|_\infty |\xi_s|ds}.\]
Taking $\xi_t=\mathds{1}_{|Z^\varphi_t|\geq \|\nabla_q\varphi(T-t,\cdot)\|_\infty }$ we deduce that 
\[\esp{\int_0^T (|Z^\varphi_t|-\|\nabla_q\varphi(T-t,\cdot)\|_\infty )\mathds{1}_{|Z^\varphi_t|\geq \|\nabla_q\varphi(T-t,\cdot)\|_\infty }dt}\leq 0,\]
which implies that 
\[\int_0^T (|Z^\varphi_t|-\|\nabla_q\varphi(T-t,\cdot)\|_\infty )\mathds{1}_{|Z^\varphi_t|\geq \|\nabla_q\varphi(T-t,\cdot)\|_\infty }dt=0 \quad a.s.\]
Finally we deduce that 
\[|Z^\varphi_t|\leq \|\nabla_q\varphi(T-t,\cdot)\|_\infty \quad a.e \text{ in } [0,T], \quad a.s.\]
\end{proof}
\begin{remarque}
Although it requires no differentiability of the coefficients, let us remark that our result is strictly weaker than the analogue obtained via Malliavin calculus since $(Z^\varphi_t)_{t\in [0,T]}$ is only bounded in $L^\infty(\Omega,L^\infty([0,T],\reels^{d^0}))$ and we do not have pointwise estimates on $Z^\varphi_t$ for a fixed $t\in [0,T]$.Nevertheless, we believe this is the optimal regularity obtainable under our assumptions. Indeed since $\varphi$ is only Lipschitz its gradient might no be defined at every point, hence the almost everywhere nature of the result. 
\end{remarque}
We now adapt a standard result from the theory of Markovian FBSDEs to our setting, namely the existence of a Lipschitz decoupling field to \eqref{eq: def lip sol system} implies the uniqueness of strong solutions
\begin{thm}
\label{lemma: uniqueness lip sol}
Under Hypothesis \ref{hyp: locally lipschitz in z}, if there exists a Lipschitz solution $(U,\varphi)$ to MFGMP$(T,\psi,g,H_z,L,F,G)$ on $[0,\tau]$ for some $\tau>0$ then there exists a unique strong solution $(X_s,q_s,U_s,\varphi_s,Z_s,Z^\varphi_s)_{s\in [T-t,T]}$ to \eqref{eq: def lip sol system}
satisfying 
    \[\exists C>0, \quad |Z^\varphi_s|<C, \quad a.e \text{ for }s\in [T-t,T] \quad  a.s.\]
for any $t\leq \tau$ and admissible initial condition $(X_0,q_0)$, moreover if all coefficients are Lipschitz then it is sufficient to require 
\[\esp{\int_{T-t}^T|Z^\varphi_s|^2ds}<+\infty,\]
instead of a bound in $L^\infty(\Omega,L^\infty([T-t,T],\reels^{d^0}))$. In particular, there can exist at most one Lipschitz solution on $[0,t]$ for any $t\leq T$. 
\end{thm}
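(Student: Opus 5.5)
We follow the classical decoupling-field argument for Markovian FBSDEs: compare an arbitrary strong solution with the solution represented by the Lipschitz solution $(U,\varphi)$. Fix $t\le\tau$ and an admissible $(X_0,q_0)$. Existence is part of the definition of a Lipschitz solution, and Lemma \ref{lemma: Z bound by lip} shows that this solution has $|Z^\varphi_s|\le \|\nabla_q\varphi\|_\infty$. Let $(X'_s,q'_s,U'_s,\varphi'_s,Z'_s,Z'^\varphi_s)$ be another strong solution with $|Z'^\varphi|\le C'$. Set $M=\max(C',\sup_s\|\nabla_q\varphi(T-s,\cdot)\|_\infty)$. On the relevant range all coefficients are then Lipschitz with constant $C_{coef}+\omega(M)$, which reduces the problem to the globally Lipschitz case. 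When all coefficients are Lipschitz, the $L^\infty$ bound is not needed and the square integrability of $Z'^\varphi$ suffices.

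\textbf{Step 1 (forward comparison process).} Construct $(\hat X_s,\hat q_s)$ solving the forward equations driven by the decoupling field and started from the same initial condition:
\[
\hat U_s=U(T-s,\hat X_s,\hat q_s,\mathcal{L}(\hat X_s|\mathcal{F}^0_s)),\qquad \hat\varphi_s=\varphi(T-s,\hat q_s,\mathcal{L}(\hat X_s|\mathcal{F}^0_s)).
\]
Rather than solving a McKean--Vlasov SDE with a feedback depending on $\hat Z^\varphi$, take $\hat X=X$, $\hat q=q$, the solution given by the definition, so that $\hat U=U_s$ and $\hat\varphi=\varphi_s$.

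\textbf{Step 2 (backward distances).} Write $\delta X=X'-X$ and similarly for the other components. Let
\[
\Delta^U_s=U'_s-U(T-s,X'_s,q'_s,\mathcal{L}(X'_s|\mathcal{F}^0_s)),\qquad \Delta^\varphi_s=\varphi'_s-\varphi(T-s,q'_s,\mathcal{L}(X'_s|\mathcal{F}^0_s)).
\]
Proving $\Delta\equiv 0$ directly requires an It\^o formula for $U,\varphi$ along $X'$, which is unavailable without differentiability. We therefore follow the scheme of \cite{lipschitz-sol} and obtain it through small-step time discretization. Partition $[T-t,T]$ into small steps $h$. On each step $[s_i,s_{i+1}]$, restart the Lipschitz solution from $(X'_{s_i},q'_{s_i})$; this is possible by the definition and by Lemma \ref{lemma: bootstraping lip sol}. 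Compare the restarted solution with the primed one on that short interval using the standard short-time stability estimate for Lipschitz FBSDEs with conditional laws. For $h$ small, this estimate gives
\[
\mathbb{E}\bigl[|\Delta^U_{s_i}|^2+|\Delta^\varphi_{s_i}|^2\bigr]\le (1+Ch)\,\mathbb{E}\bigl[|\Delta^U_{s_{i+1}}|^2+|\Delta^\varphi_{s_{i+1}}|^2\bigr].
\]
The point is that the two solutions start from the same forward data, and at time $s_{i+1}$ the restarted solution is exactly decoupled by $(U,\varphi)$, while the Lipschitz constants of $(U,\varphi)$ in $(x,q,\mu)$ control the mismatch at $s_{i+1}$. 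The conditional laws are handled via $\mathcal{W}_2(\mathcal{L}(X|\mathcal{F}^0),\mathcal{L}(X'|\mathcal{F}^0))^2\le \mathbb{E}[|X-X'|^2\,|\,\mathcal{F}^0]$. Since the terminal mismatch vanishes, $\Delta^U_T=\Delta^\varphi_T=0$ because $U(0,\cdot)=g$ and $\varphi(0,\cdot)=\psi$. Iterating backwards over the steps then yields $\Delta\equiv 0$.

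\textbf{Step 3 (conclusion).} Once $U'$ and $\varphi'$ are expressed through the decoupling field, $(X',q')$ solves a forward McKean--Vlasov SDE with Lipschitz coefficients. The $Z'^\varphi$ term is identified as the martingale integrand of $\varphi(T-s,q'_s,\cdot)$; by the argument of Lemma \ref{lemma: Z bound by lip} and uniqueness of the martingale representation, it coincides with the integrand of the unprimed solution once the forward paths agree. Grönwall's lemma then gives $X'=X$ and $q'=q$, and uniqueness of the martingale representation gives the equality of the $Z$ terms. Uniqueness of Lipschitz solutions follows: two Lipschitz solutions represent the same unique strong solution, hence agree at $(T-t,x,q,\mu)$ for every admissible initial condition. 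This also holds for arbitrary $x$, using the $x$-characteristics \eqref{eq: caracteristics in x fbsde with major} together with $X_0\sim\mu$ and $q_0=q$ deterministic.

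\textbf{Main obstacle.} The main obstacle is Step 2, namely making the one-step stability estimate rigorous with constants of order $1+Ch$. The difficulty is that the feedback through $Z^\varphi$ enters the forward drift. Here the uniform bound $M$ is what keeps the Lipschitz constants finite.
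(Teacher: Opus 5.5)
Your truncation in $z$ at $M=\max(C',\sup_s\|\nabla_q\varphi(T-s,\cdot)\|_\infty)$ and your Step 2 follow the paper's argument. The paper also works window by window backward from $T$ and restarts the Lipschitz solution from the other solution's state. It then uses short-time uniqueness for Lipschitz conditional McKean--Vlasov FBSDEs to transfer the representation $U'_s=U(T-s,X'_s,q'_s,\mathcal{L}(X'_s|\mathcal{F}^0_s))$, $\varphi'_s=\varphi(T-s,q'_s,\mathcal{L}(X'_s|\mathcal{F}^0_s))$ to the other solution.

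The $(1+Ch)$ bound you call the main obstacle is not needed. Since $\Delta_T=0$ exactly, on each window the two solutions start from the same forward data and end with the same Lipschitz terminal function. Plain short-time uniqueness therefore gives $\Delta\equiv 0$, and in fact equality of all processes on that window. The forward volatilities are constant, so the $Z^\varphi$-dependence of the drift causes no trouble in short time.

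The genuine gap is Step 3. Once $U',\varphi'$ are expressed through the decoupling field, $(X',q')$ does \emph{not} solve a closed forward SDE with Lipschitz coefficients. The reason is that $F$ and $H_z$ still depend on $Z'^\varphi_s$, which is not a function of $(s,X'_s,q'_s,\mathcal{L}(X'_s|\mathcal{F}^0_s))$: $\varphi$ is only Lipschitz, and Lemma \ref{lemma: Z bound by lip} gives a bound on $Z'^\varphi$, not a feedback formula.

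Your way around this is circular. You identify $Z'^\varphi$ with $Z^\varphi$ ``once the forward paths agree'', and then use Gr\"onwall to make the forward paths agree, which needs $Z'^\varphi-Z^\varphi$ to be controlled already. Breaking the circle requires a backward energy estimate: It\^o on $|\varphi'_s-\varphi_s|^2$, together with the Lipschitz representation at the right end of the window. This gives $\sigma^0\int_r^u\|Z'^\varphi_s-Z^\varphi_s\|^2ds\le C\sup_{[r,u]}(\|X'-X\|^2+\|q'-q\|^2)$ with a constant that is not small. Combined with the forward bound, which carries a factor $u-r$, this closes only on short windows.

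That is exactly the paper's forward step. On $[T-t,T-t+h]$ both solutions start from $(X_0,q_0)$, and after Step 2 both have terminal data $(U,\varphi)(t-h,\cdot)$ at time $T-t+h$. They therefore solve the same truncated Lipschitz FBSDE on a short interval and coincide by short-time uniqueness. Iterating forward gives $X'=X$, $q'=q$ and equality of all other components. With this replacement, which uses only the tool you already invoke in Step 2, your proof goes through.
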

\begin{proof}
Since the proof does not change depending on the time interval considered, we assume without loss of generality that $t=\tau=T$. We also assume in a first step that all coefficients and $(U,\varphi)$ are Lipschitz with a contant $c_l$. Let us consider an initial condition $(X_0,q_0)$ and assume that there exists two solutions $(X^1_s,q^1_s,U^1_s,\varphi^1_s,Z^1_s,Z^{\varphi,1}_s)_{s\in [0,T]}$ given by the Lipschitz solution and $(\tilde{X}_s,\tilde{q}_s,\tilde{U}_s,\tilde{\varphi}_s,\tilde{Z}_s,\tilde{Z}^\varphi_s)$. Since all coefficients are Lipschitz, by a standard contraction argument \cite{probabilistic-mfg} Theorem 5.4, there exists a time $t^*$ depending on $c_l$ only such that for $t\leq t^*$ there exists a unique solution to \eqref{eq: def lip sol system}. Let us consider a solution $(\bar{X}_s,\bar{q}_s,\bar{U}_s,\bar{\varphi}_s,\bar{Z}^\varphi_s,\bar{Z}_s)_{s\in [T,T-t^*]}$ of \eqref{eq: def lip sol system} with initial condition $\tilde{X}_{T-t^*},\tilde{q}_{T-t^*}$ and such that 
\[\forall s\in [T-t^*,T], \quad \bar{U}_s=U(T-s,\bar{X}_s,\bar{q}_s,\mathcal{L}(\bar{X}_s|\mathcal{F}^0_s)), \quad \bar{\varphi}_s=\varphi(T-s,\bar{q}_s,\mathcal{L}(\bar{X}_s|\mathcal{F}^0_s)).\]
The existence of such solution follows from the definition of a Lipschitz solution. On the other hand since it satisfies the same FBSDE as $(\tilde{X}_s,\tilde{q}_s,\tilde{U}_s,\tilde{\varphi}_s,\tilde{Z}_s,\tilde{Z}^\varphi_s)_{s\in [T-t^*,T]}$, we deduce that
\[\forall s\in [T-t^*,T], \quad \tilde{U}_s=U(T-s,\tilde{X}_s,\tilde{q}_s,\mathcal{L}(\tilde{X}_s|\mathcal{F}^0_s)), \quad \tilde{\varphi}_s=\varphi(T-s,\tilde{q}_s,\mathcal{L}(\tilde{X}_s|\mathcal{F}^0_s))\quad a.s.\]
Following this representation formula, $(\tilde{X}_s,\tilde{q}_s,\tilde{U}_s,\tilde{\varphi}_s,\tilde{Z}_s,\tilde{Z}^\varphi_s)_{s\in [T-2t^*,T-t^*]}$ is a solution to 
\begin{equation}
\left\{
\begin{array}{l}
     \displaystyle X_{u}=\tilde{X}_{T-2t^*}-\int_{T-2t^*}^{u}  F(X_s,q_s,U_s,\varphi_s,Z^\varphi_s,\mathcal{L}(X_s,U_s|\mathcal{F}^0_s))ds+\sqrt{2\sigma}\int_{T-2t^*}^{u}dB_{s}, \\[0.5em]
     \displaystyle q_u=\tilde{q}_{T-2t^*}-\int_{T-2t^*}^{u} H_z(q_s,\varphi_s,Z^\varphi_s,\mathcal{L}(X_s,U_s|\mathcal{F}^0_s))ds+\sqrt{2\sigma^0}\int_{T-2t^*}^{u}dW^0_{s},\\[0.5em]
    \displaystyle  U_u= U_{T-t^*}+\int_u^{T-t^*} G(X_s,q_s,U_s,\varphi_s,Z^\varphi_s,\mathcal{L}(X_s,U_s|\mathcal{F}^0_s))ds-\int_u^{T-t^*}
    Z_sd(B,W^0)_{s},\\[0.5em]
\displaystyle \varphi_u= \varphi_{T-t^*}+\int_{u}^{T-t^*} L(q_s,\varphi_s,Z^\varphi_s,\mathcal{L}(X_s,U_s|\mathcal{F}^0_s))ds-\sqrt{2\sigma^0}\int_u^{T-t^*} Z^\varphi_sdW^0_{s},\\[0.5em]
U_{T-t^*}=U(T-t^*,X_{T-t^*},q_{T-t^*},\mathcal{L}(X_{T-t^*}|\mathcal{F}^0_{T-t^*})),\\[0.5em]
\varphi_{T-t^*}=\varphi(T-t^*,q_{T-t^*},\mathcal{L}(X_{T-t^*}|\mathcal{F}^0_{T-t^*})).
\end{array}
\right.
\end{equation}
Since all coefficients of this FBSDE, are $c_l$ Lipschitz, there can exist at most one solution on $[T-t^*,T]$ and by repeating this argument across the whole interval $[0,T]$, we deduce that 
\[\forall s\in [0,T], \quad \tilde{U}_s=U(T-s,\tilde{X}_s,\tilde{q}_s,\mathcal{L}(\tilde{X}_s|\mathcal{F}^0_s)), \quad \tilde{\varphi}_s=\varphi(T-s,\tilde{q}_s,\mathcal{L}(\tilde{X}_s|\mathcal{F}^0_s)) \quad a.s.\]
We now use the same argument but forward in time. On $[0,t^*]$, $(X^1_s,q^1_s,U^1_s,\varphi^1_s,Z^1_s,Z^{\varphi,1}_s)_{s\in [0,t^*]}$  $(\tilde{X}_s,\tilde{q}_s,\tilde{U}_s,\tilde{\varphi}_s,\tilde{Z}_s,\tilde{Z}^\varphi_s)_{s\in [0,t^*]}$ solve the same FBSDE and consequently are equal by the small time uniqueness result Theorem 5.4 of \cite{probabilistic-mfg}. The uniqueness on $[0,T]$ follows by induction. 

We now relax the assumption that coefficients are $c_l$ Lipschitz and only assume that Hypothesis \ref{hyp: locally lipschitz in z} is in force. We let $C_{\varphi,U}=\sup_{s\in [0,T]}\|(U,\varphi)(s,\cdot)\|_{Lip}$. Let us assume that for an initial condition $X_0,q_0$ there exists two solutions to \eqref{eq: def lip sol system} $(X^i_s,q^i_s,U^i_s,\varphi^i_s,Z^i_s,Z^{\varphi,i}_s)_{s\in [0,T]}$ $i=1,2$. We may assume that for $i=1$ it is associated to $(U,\varphi)$ without loss of generality. We also assume that 
\[\exists C, \quad |Z^{\varphi,2}_s|\leq C \quad a.e \text{ for } s\in [0,T]\quad a.s. \]
Let $M=C\wedge C_{\varphi,U}$, since for any $s\leq T$, $Z^{\varphi,1}_s,Z^{\varphi,2}_s$ are bounded in $L^\infty(\Omega,L^\infty([0,T],\reels^{d^0}))$ by $M$, $(X^i_s,q^i_s,U^i_s,\varphi^i_s,Z^i_s,Z^{\varphi,i}_s)_{s\in [0,T]}$ is also a solution to \eqref{eq: def lip sol system} by replacing the coefficients with $G,F,H_z,L$ with 
\[(G^M,F^M,H_z^M,L^M):(x,q,u,\varphi,z,\mu)\mapsto (G,F,H_z,L)(x,q,u,\varphi,z\wedge M\vee(-M),\mu).\]
Since $(U,\varphi,G^M,F^M,H_z^M,L^M)$ are Lipschitz uniformly on $[0,T]$ for 
\[c_l=C_{coef}+M+\omega(M),\]
we may apply the previous result for Lipschitz coefficients. 
\end{proof}

Let us now introduce the following norm for a function $f:\reels^d\times\reels^{d^0}\times \reels^d\times \reels^\times \reels^{d^0}\times \mathcal{P}_2(\reels^{2d})\to \reels^k$
\[\|f\|_{\infty,c}=\sup_{\begin{array}{c}
    (x,q,u,\varphi,z,m)\in \reels^d\times\reels^{d^0}\times \reels^d\times \reels\times \reels^{d^0}\times \mathcal{P}_2(\reels^{2d})\\
    \| z\|_\infty \leq c
\end{array}
}
\frac{|f(x,q,u,\varphi,z,m)|}{1+|x|+|q|+|u|+|\varphi|+\sqrt{E_2(m)}}.
    \]
In particular, whenever $f$ does not depend not depend on $z$ this norm is denoted by $\|\cdot \|_{\infty,loc}$.
This norm is convenient to prove the following stability result for Lipschitz solutions
\begin{thm}
    \label{thm: stability lip sol}
Fix $T_c\leq T$ and let $(\varphi_n,U_n)_{n\in \mathbb{N}}:[0,T_c]\times \reels^d\times \reels^{d^0}\times\mathcal{P}_2(\reels^d)\to \reels\times \reels^d$ be a sequence of Lipschitz solutions to MFGMP$(T,\psi^n,g^n,H_z^n, L^n,F^n,G^n)$ on $[0,T_c]$. Suppose that there exists locally bounded functions $(F,H_z,L,G,\psi,g)$ such that 
\[\forall z\geq 0, \quad \lim_{n\to \infty } \|(F,H_z,L,G,\psi,g)-(F^n,H_z^n,L^n,G^n,\psi^n,g^n)\|_{\infty,z}=0.\]
If Hypothesis \ref{hyp: locally lipschitz in z} holds uniformly in $n\in \mathbb{N}$, i.e. there exists a constant $C_{coef}$, and an increasing function $\omega:\reels\to \reels^+$, both independent of $n\in \mathbb{N}$ such that 
\begin{enumerate}
    \item[-]$\forall n\in \mathbb{N}, \forall t\in [0,T_c],\quad  \|U(t,\cdot)\|_{Lip},\|\varphi(t,\cdot)\|_{Lip}\leq C_{coef},$
    \item[-] for $f_n= F^n,H_z^n, L^n,G^n,\psi^n,g^n$
    \begin{gather*}
        \forall n\in \mathbb{N}, \quad \forall (x,q,u,z,\mu),(y,q',v,z',\nu)\in \reels^{2(d+d^0)}\times \mathcal{P}_2(\reels^{2d}),\\
            |f_n(x,q,u,z,\mu)-f_n(y,q',v,z',\nu)|\\
            \leq (C_{coef}+\omega(|z|\wedge |z'|))\left(|x-y|+|q-q'|+|u-v|+|z-z'|+\mathcal{W}_2(\mu,\nu)\right),
    \end{gather*}
\end{enumerate}
then $(U_n,\varphi_n)_{n\in \mathbb{N}}$ converges locally uniformly to $(U,\varphi)$ a Lipschitz solution to MFGMP$(T,\psi,g,H_z,L,F,G)$ on $[0,T_c]$. 
\end{thm}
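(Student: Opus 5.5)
The plan has three steps: Arzelà–Ascoli to get a limit, stability estimates for the characteristics to get convergence of the FBSDE solutions, and passage to the limit in the equations and the representation formulas.

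\textbf{Step 1: extraction of a limit.} Since $\|U_n(t,\cdot)\|_{Lip},\|\varphi_n(t,\cdot)\|_{Lip}\leq C_{coef}$ uniformly in $n$ and $t$, the family $(U_n,\varphi_n)$ is equi-Lipschitz in $(x,q,\mu)$. Evaluating at a fixed point $(0,0,\delta_0)$, it is also bounded by a constant depending only on the data. Equicontinuity in time will come from the FBSDE representation: the increments of $\varphi_n(T-s,q_s,\mu_s)$ are controlled through the bound $|Z^{\varphi}_s|\leq C_{coef}$ given by Lemma \ref{lemma: Z bound by lip}. Since bounded sets of $\mathcal{P}_2$ are not compact, I would apply Arzelà--Ascoli on compact subsets, for instance sets of measures with uniformly bounded $q$-th moment for some $q>2$, which are $\mathcal{W}_2$-compact. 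A diagonal extraction then gives a subsequence converging locally uniformly to a limit $(U,\varphi)$ that is Lipschitz with constant $C_{coef}$. Convergence of the full sequence will follow at the end from uniqueness, Theorem \ref{lemma: uniqueness lip sol}.

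\textbf{Step 2: convergence of the characteristics.} Fix $t$ and an admissible $(X_0,q_0)$, and let $\Theta^n=(X^n,q^n,U^n,\varphi^n,Z^n,Z^{\varphi,n})$ be the strong solutions attached to $(U_n,\varphi_n)$. By Lemma \ref{lemma: Z bound by lip}, $|Z^{\varphi,n}|\leq C_{coef}$ almost everywhere. Hence all coefficients may be replaced by their truncations at level $M=C_{coef}$, as in the proof of Theorem \ref{lemma: uniqueness lip sol}. These truncations are globally Lipschitz with constant $c_l=C_{coef}+M+\omega(M)$, uniformly in $n$.

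I would then show that $\Theta^n$ is Cauchy, using the decoupling fields in the style of \cite{probabilistic-mfg}:
\begin{enumerate}
\item[-] The forward equations become Lipschitz McKean--Vlasov SDEs, with drifts $F^n(\cdot,U_n(T-s,\cdot),\varphi_n,\dots)$. The only $Z^\varphi$-dependence is bounded, and I would treat it via the BSDE estimate below.
\item[-] Standard $L^2$ stability for the backward equation, with its $dW^0$-martingale part, gives
\[\esp{\sup|\varphi^n-\varphi^m|^2+\int|Z^{\varphi,n}-Z^{\varphi,m}|^2}\lesssim \esp{\sup|\varphi^n_s-\varphi^m_s|^2\text{ terminal/decoupling terms}}+\text{coef. errors}.\]
\end{enumerate}
The coefficient errors are bounded by $\|f^n-f^m\|_{\infty,M}$ times $(1+|X|+\dots)$, and these moments are uniformly bounded by Gronwall. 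Differences of decoupling fields evaluated along trajectories are handled by local uniform convergence together with uniform moment bounds, which give tightness and hence uniform integrability. Working on short intervals of length depending on $c_l$ and gluing with Lemma \ref{lemma: bootstraping lip sol}, I expect to get $\Theta^n\to\Theta$ in the natural $L^2$ norms. The same argument applies to the $x$-characteristics \eqref{eq: caracteristics in x fbsde with major}.

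\textbf{Step 3: passing to the limit.} In the limit, $\Theta$ solves \eqref{eq: def lip sol system} with the limit coefficients. Since $\mathcal{L}(X^n_s|\mathcal{F}^0_s)\to\mathcal{L}(X_s|\mathcal{F}^0_s)$ in $\mathbb{E}[\mathcal{W}_2^2]$, the representations $\varphi_s=\varphi(T-s,q_s,\mu_s)$ and $U_s=U(T-s,X_s,q_s,\mu_s)$ hold almost surely. The bound $|Z^\varphi|\leq C_{coef}$ passes to the a.e.\ limit, and the limit coefficients inherit Hypothesis \ref{hyp: locally lipschitz in z}. Therefore $(U,\varphi)$ is a Lipschitz solution. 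By Theorem \ref{lemma: uniqueness lip sol} it is unique, so every subsequence has the same limit and the whole sequence converges.

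\textbf{Main obstacle.} I expect the main difficulty to be Step 2. Convergence of $(U_n,\varphi_n)$ is only locally uniform, not uniform on all of $\mathcal{P}_2$, and the $\mathcal{W}_2$-compactness needed to evaluate along random conditional laws is delicate. I would first try to obtain uniform $L^{2+\epsilon}$ moment bounds on $X^n$ (for admissible data with extra integrability, then argue by density and stability in the initial condition), so that the conditional laws lie in a $\mathcal{W}_2$-compact set with high probability.
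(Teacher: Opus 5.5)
Your proposal is correct, and its overall structure matches the paper's proof. Both arguments get H\"older time regularity from the representation and the bound $|Z^{\varphi,n}|\le C_{coef}$, then apply Arzel\`a--Ascoli. Both then run a short-horizon Cauchy estimate on the characteristics, in which the $Z^\varphi$-dependence of the forward drift is absorbed by the $\sigma^0\int\|Z^{\varphi,n}-Z^{\varphi,m}\|^2$ term from It\^o's formula on the $\varphi$-equation. Finally both pass to the limit, invoke uniqueness (Theorem \ref{lemma: uniqueness lip sol}) to get convergence of the whole sequence, and glue with Lemma \ref{lemma: bootstraping lip sol}.

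Where you genuinely differ is in controlling the difference of decoupling fields evaluated along $n$- and $m$-dependent trajectories. The paper asserts that Arzel\`a--Ascoli gives convergence in the weighted norm $\|\cdot\|_{\infty,loc}$ on all of $\reels^d\times\reels^{d^0}\times\mathcal{P}_2(\reels^d)$. It then inserts $\|(U^n,\varphi^n)-(U^m,\varphi^m)\|_{\infty,C_{coef}}$ directly as an error term in the Gronwall estimates. You use only uniform convergence on compacts and pay for it with tightness of the random conditional laws: $L^{2+\epsilon}$ moment bounds, Markov's inequality, and $\mathcal{W}_2$-compactness of the sets $\{\int|x|^{2+\epsilon}d\mu\le R\}$, followed by a density argument in the initial datum.

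The paper's route is shorter, but yours is the justified one. $\mathcal{P}_2(\reels^d)$ is not locally compact, and even in finite dimension, locally uniform convergence of equi-Lipschitz functions does not imply convergence in a linearly weighted sup norm. For example, $x\mapsto(|x|-n)_+$ tends to $0$ locally uniformly, yet its weighted sup norm stays equal to $1$.

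Make two points explicit when you write it up. First, the diagonal extraction over the higher-moment sets only gives convergence on $\mathcal{P}_{2+\epsilon}$; you need to extend it to all of $\mathcal{P}_2$, uniformly on compacts, by density and the uniform Lipschitz bound. Second, the density step needs stability of the characteristics with respect to $(X_0,q_0)$ uniformly in $n$. This is available on intervals of length depending only on $C_{coef},\omega,\sigma^0$, either from the contraction estimate for the truncated coefficients or from the argument of Lemma \ref{lemma: local lipschitz estimate}.

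You also treat the $x$-characteristics \eqref{eq: caracteristics in x fbsde with major}. These are part of the definition of a Lipschitz solution, but the paper's proof does not address them.
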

\begin{proof}
We may assume without loss of generality that $T=T_c$. Let us fix an initial condition $(X_0,q_0)$ and a horizon $t_h\leq T$ and consider the strong solution $(X^n_s,q^n_s,U^n_s,\varphi^n_s,Z^{\varphi,n}_s,Z^n_s)_{s\in [T-t_h,T]}$ of \eqref{eq: def lip sol system} associated to the Lipschitz solution $(U^n,\varphi^n)$. By Lemma \ref{lemma: Z bound by lip}, and thanks to our assumptions 
\begin{equation}
    \label{eq: bound on Z stability proof}
    \forall n\in \mathbb{N}\quad   |Z^{\varphi,n}_s|\leq C_{coef}, a.e\text{ in }[T-t_h,T]\quad  a.s.\end{equation} 
From there, classic Gronwall type estimates allow us to deduce that there exists a constant $C$ depending only on $C_{coef},\omega,T$ such that 
\begin{gather}
\label{eq: bound on process stability proof}
    \forall n \in \mathbb{N}, s,t\in [T-t_h,T],\quad,\\
    \nonumber \|X_s\|,\|q_s\|,\esp{\int_{T-t_h}^{T}|Z_s|^2ds}, \frac{\|X_t-X_s\|}{\sqrt{|t-s|}},\frac{\|q_t-q_s\|}{\sqrt{|t-s|}},\frac{\|\varphi_t-\varphi_s\|}{\sqrt{|t-s|}},\frac{\|U_t-U_s\|}{\sqrt{|t-s|}}\leq C(1+\|X_0\|+|q_0|).
\end{gather}
In particular for $T-t_h\leq s\leq t\leq T$ and $q_0\in \reels^{d^0}$, 
\begin{align*}
|\varphi^n (T-s,q_0,\mathcal{L}(X_0))-\varphi^n(T-t,q_0,\mathcal{L}(X_0))|&=|\varphi^n_{s}-\varphi^n_{t}+\varphi(T-t,q_{t},\mathcal{L}(X_{t}|\mathcal{F}^0_t))-\varphi_n(T-t,q_0,\mathcal{L}(X_0))|\\
&\leq (C+C_{coef})(1+\|X_0\|+|q_0|)\sqrt{t-s}.
\end{align*}
Since this is true for any $(t_h,X_0,q_0)\in [0,T]\times H\times \reels^{d^0}$, and given that a similar computation can be carried out for $U^n$, we deduce that the sequence $(U^n,\varphi^n)_{n\in \mathbb{N}}$ is locally Holder in time, uniformly in $n\in \mathbb{N}$. Moreover since $(\psi^n,g^n)_{n\in \mathbb{N}}$ converges to $(\psi,g)$ for the $\|\cdot \|_{\infty,loc}$ norm, we deduce that there exists a constant $C>0$ such that 
\[\forall n\in \mathbb{N}, \forall t\in [0,T], \quad \|(U^n,\varphi^n)(t,\cdot)\|_{\infty,loc}\leq C.\]
In particular, by Azerla-Ascoli theorem, along a subsequence, $(U^n,\varphi^n)_{n\in \mathbb{N}}$ converges for $\|\cdot\|_{\infty,loc}$ to a limit $(U,\varphi)$. We now work along one such subsequence, still denoted by $(U^n,\varphi^n)_{n\in \mathbb{N}}$. 
Fixing $t_h\in [0,T]$ and an initial condition $(X_0,q_0)$, in light of \eqref{eq: bound on process stability proof} and \eqref{eq: bound on Z stability proof}, and by an application of Ito's lemma, there exists a constant depending only on $\omega,C_{coef}$ and $\sigma^0$, $C_{x,q}$ such that for any $n,m\in \mathbb{N}$
\begin{gather*}
\forall s\in [T-t_h,T], \quad \|X^n_s-X^m_s\|^2+\|q_s^n-q_s^m\|^2\leq\int_{T-T_h}^s C_{x,q}\left(\|X^n_u-X^m_u\|^2+\|q_u^n-q_u^m\|^2\right)du\\
+\frac{\sigma^0}{2C_{coef}}\int_{T-t_h}^s \|Z^{\varphi,n}_u-Z^{\varphi,m}_u\|^2du+C_{x,q}(1+T+\|X_0\|^2+\|q_0\|^2)\|(F^n,H_z^n,U^n,\varphi^n)-(F^m,H_z^m,U^n,\varphi^n)\|^2_{\infty,C_{coef}}.
\end{gather*}
By an application of Gronwall's lemma, there exists a constant $C'_{x,q}$ depending on $C_{coef},\omega,\sigma^0$ and $T$ only such that for any $s\in [T-t_h,T]$
\begin{align*}
 \|X^n_s-X^m_s\|^2+\|q_s^n-q_s^m\|^2&\leq (\frac{\sigma^0}{2C_{Coef}}\!+\!C'_{x,q}t_h)\int_{T-t_h}^{T}\! \|Z^{\varphi,n}_u-Z^{\varphi,m}_u\|^2du\\
 &+C'_{x,q}(1+T+\|X_0\|^2+\|q_0\|^2)\|(F^n,H_z^n,U^n,\varphi^n)-(F^m,H_z^m,U^m,\varphi^m)\|^2_{\infty,C_{coef}}.
\end{align*}
We now turn to an estimate on 
\[\int_{T-t_h}^{T} \|Z^{\varphi,n}_s-Z^{\varphi,m}_s\|^2ds,\]
to that end, we apply Ito's lemma to the backward processes, yielding
\begin{gather*}
    \int_{T-t_h}^{T} \|Z^n_s-Z^m_s\|^2ds+(\sigma^0-Ct)\int_{T-t_h}^{T} \|Z^{\varphi,n}_s-Z^{\varphi,m}_s\|^2ds\leq C\int_{T-t_h}^{T}\left( \|X^n_s-X^m_s\|^2+\|q_s^n-q_s^m\|^2\right)ds\\
    +C(1+T+\|X_0\|^2+\|q_0\|^2)\|(F^n,H_z^n,U^n,\varphi^n)-(F^m,H_z^m,U^m,\varphi^m)\|^2_{\infty,C_{coef}}
\end{gather*}
for a constant $C$ depending on $\omega,C_{coef},\sigma^0$ and $T$. Plugging in the estimates we already have on the difference of the forward processes, we deduce that there exists a time $\tau$ depending only on $T,\omega,C_{coef},\sigma^0$, such that if the horizon $t_h$ is chosen such that $t_h\leq \tau$ then 
\[\int_{T-t_h}^{T} \|(Z^n,Z^{\varphi,n})_s-(Z^m,Z^{\varphi,m})_s\|^2ds\leq  C_\tau(1+T+\|X_0\|^2+\|q_0\|^2)\|(F^n,H_z^n,U^n,\varphi^n)-(F^m,H_z^m,U^m,\varphi^m)\|^2_{\infty,C_{coef}},\]
for a constant $C_\tau$ with the same dependencies as $\tau$. We now make the assumption $t_h<\tau$. Since $(F^n,H_z^n,U^n,\varphi^n)_{n\in \mathbb{N}}$ converges to $(F,H_z,U,\varphi)$ for $\|\cdot \|_{\infty,C_{coef}}$, it follows that $\left((Z^n_s,Z^{\varphi,n}_s)_{s\in [0,t_h]}\right)_{n\in \mathbb{N}}$ is a cauchy sequence in $\mathcal{M}_{(d+d^0)\times d}(\mathcal{H}^{t_h})\times \left(\mathcal{H}^{t_h}\right)^{d^0}$, and by completeness it converges in this space to a random process $(Z_t,Z^\varphi_t)_{t\in [T-t_h,T]}$. Similarly for any $t\in [T-t_h,T]$, $(X^n_t,q^n_t)_{n\in \mathbb{N}}$ is a cauchy sequence in $\mathcal{H}^{d+d^0}$, which converges to a couple $(X_t,q_t)\in \mathcal{H}^{d+d^0}$. We now show that for fixed $t\leq t_h$, $(U^n_t,\varphi^n_t)$ converges in $\mathcal{H}^{d+1}$ to $(U(T-t,X_t,q_t,\mathcal{L}(X_t|\mathcal{F}^0_t)), \varphi(T-t,q_t,\mathcal{L}(X_t)|\mathcal{F}^0_t))$. We show this is true only for $\varphi$, the argument for $U$ following in a similar fashion. By the definition of a Lipschitz solution, we already know that 
\[\forall n\in \mathbb{N}, t\in [T-t_h,T],\quad \varphi^n_t=\varphi^n(T-t,q^n_t,\mathcal{L}(X^n_t|\mathcal{F}^0_t)).\]
Consequently for fixed $n\in \mathbb{N}$ 
\[\|\varphi^n_t-\varphi(T-t,q_t,\mathcal{L}(X_t)|\mathcal{F}^0_t)\| \leq C_{coef}\|(X^n_t,q^n_t)-(X_t,q_t)\|+\underbrace{\| \varphi^n(T-t,q_t,\mathcal{L}(X_t|\mathcal{F}^0_t))-\varphi(T-t,q_t,\mathcal{L}(X_t|\mathcal{F}^0_t))\|}_{\leq \|\varphi-\varphi^n\|_{\infty,loc}(1+\|q_t\|+\|X_t\|)},\]
and the convergence follows from the convergence of $(\varphi^n(t,\cdot))_{n\in \mathbb{N}}$ and $(X^n_t,q^n_t)_{n\in \mathbb{N}}$. Let us remind that by Ito's isometry 
\[\forall t\leq t_h, \quad \left\|\int_{T-t}^T (Z^{\varphi,n}_s-Z^\varphi_s)dW^0_s\right\|^2=\esp{\int_{T-t}^T|Z^{\varphi,n}_s-Z^\varphi_s|^2ds }.\]
Finally using the convergence of all processes as well as the system satisfied by \linebreak $\left((X^n_t, q^n_t,\varphi^n_t,U^n_t,Z^n_t,Z^{\varphi,n}_t)_{t\in [T-T_h,T]}\right)_{n\in \mathbb{N}}$. we deduce that $(X_t, q_t,\varphi_t,U_t,Z_t,Z^{\varphi}_t)_{t\in [T-t_h,T]}$ is a strong solution of the forward backward system 
    \begin{equation}
    \label{eq: sol on 0 tau}
\left\{
\begin{array}{l}
     \displaystyle X_{u}=X_0-\int_{T-t_h}^{u}  F(X_s,q_s,U_s,\varphi_s,Z^\varphi_s,\mathcal{L}(X_s,U_s|\mathcal{F}^0_s))ds+\sqrt{2\sigma}B_u, \\
     \displaystyle q_{u}=q_0-\int_{T-t_h}^{u} H_z(q_s,\varphi_s,Z^\varphi_s,\mathcal{L}(X_s,U_s|\mathcal{F}^0_s))ds+\sqrt{2\sigma^0}W^0_u,\\
    \displaystyle  U_u= g(X_{T},q_{T},\mathcal{L}(X_{T}|\mathcal{F}^0_{T}))+\int_u^{T} G(X_s,q_s,U_s,\varphi_s,Z^\varphi_s,\mathcal{L}(X_s,U_s|\mathcal{F}^0_s))ds-\int_u^{T}
    Z_sd(B,W^0)_s,\\
\displaystyle \varphi_u= \psi(q_{T},\mathcal{L}(X_{T}|\mathcal{F}^0_{T}))+\int_u^{T} L(q_s,\varphi_s,Z^\varphi_s,\mathcal{L}(X_s,U_s|\mathcal{F}^0_s))ds-\sqrt{2\sigma^0}\int_0^t Z^\varphi_sdW^0_s,
\end{array}
\right.
\end{equation}
satisfying 
    \[\exists C>0, \quad |Z^\varphi_s|<C_{coef} \quad a.e \text{ for }s\in [T-t_h,T] \quad  a.s.\]
and that the following representation holds 
    \[\forall s\in [T-t_h,T],\quad \varphi(T-s,q_{s},\mathcal{L}(X_{s}|\mathcal{F}^0_s))=\varphi_{s}, U(T-s,X_s,q_s,\mathcal{L}(X_s|\mathcal{F}^0_s))=U_{s}, \quad a.s.\]
Clearly, this is true for any admissible initial condition $(X_0,q_0)$, and for any time $t_h\leq \tau$. It follows by definition that $(U,\varphi)$ is a Lipschitz solution to MFGMP$(T,\psi,g,H_z,L,F,G)$ on $[0,\tau]$. Let us now take another converging subsequence of $(U^n,\varphi^n)$, by the arguments we used above, it is also a Lipschitz solution on $[0,\tau]$. However by Theorem \ref{lemma: uniqueness lip sol}, there can be at most one Lipschitz solution on $[0,\tau]$. Since all converging subsequences of $(U^n,\varphi^n)_{n\in \mathbb{N}}$ must have the same limit $(U,\varphi)$ on $[0,\tau]$, we deduce that the sequence converges strongly to the unique Lipschitz solution to $MFGMP(T,\psi,g,H_z,L,F,G)$ on this interval. Clearly we can repeat this argument to show that on the interval $[\tau,2\tau]$, $(U^n,\varphi^n)_{n\in \mathbb{N}}$ converges to a Lipschitz solution of MFGMP$(T-\tau,\varphi(\tau,\cdot),U(\tau,\cdot),H_z,L,F,G)$ on $[0,\tau]$. By Lemma \ref{lemma: bootstraping lip sol} it follows that $(U^n,\varphi^n)$ converges on $[0,2\tau]$ to the Lipschitz solution of MFGMP$(T,\psi,g,H_z,L,F,G)$. By a bootstrapping argument, we can extend this result to the whole interval $[0,T]$ concluding the proof.
\end{proof}

We now introduce the following Lipschitz norm on the Hilbert space of square integrable random variable. For a function $U:\reels^d\times \reels^{d^0}\times \mathcal{P}_2(\reels^d)\to \reels$ we denote
\[\|U\|_{Lip,H}=\sup_{\begin{array}{cc}
    (X,Y)\in H^d\\
    (q,q')\in H^{d^0}
\end{array}}
  \frac{\|U(X,q,\mathcal{L}(X))-U(Y,q',\mathcal{L}(Y))\|}{\|(X,q)-(Y,q')\|}.\]
In particular, if $U$ is $C_x$ Lipschitz in $\reels^d$, $C_q$ Lipschitz in $\reels^{d^0}$ and $C_\mu$ Lipschitz on $\mathcal{P}_2(\reels^d)$, then 
\[\|U\|_{Lip,H}\leq C_x+C_q+C_\mu.\]
\

For Lipschitz solutions, we have the following local in time a priori estimate
\begin{lemma}
\label{lemma: local lipschitz estimate}
Suppose that there exists a Lipschitz solution to MFGMP$(T,\psi,g,H_z,L,F,G)$ on $[0,T_c]$ for some $T_c\leq T$. Under Hypothesis \ref{hyp: locally lipschitz in z}, there exists a time $t^*$ and a constant $C^*$ both depending only on $C_{coef},\omega$ and $\sigma^0$ such that 
\[\forall t< t^*\wedge T_c, \quad \|(U,\varphi)(t,\cdot)\|_{Lip}\leq C^*.\]
\end{lemma}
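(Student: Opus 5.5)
We will compare two solutions of \eqref{eq: def lip sol system} started from different initial conditions, bound their difference at the initial time by Itô/Gronwall estimates, and read off the Lipschitz constant of $(U,\varphi)(t,\cdot)$ through the representation formula. The catch is that the constants in these estimates depend on $\omega(\|Z^\varphi\|_\infty)$. By Lemma \ref{lemma: Z bound by lip}, $\|Z^\varphi\|_\infty$ is controlled by the Lipschitz constant of $\varphi$ itself, so the argument is a continuation (bootstrap) in $t$.

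\textbf{Step 1: estimate on a time window where the constant is already controlled.} Set $\Lambda(t)=\sup_{s\le t}\|(U,\varphi)(s,\cdot)\|_{Lip}$. This is finite for $t\le T_c$ by the definition of Lipschitz solution, and $\Lambda(0)\le 2C_{coef}$ by the terminal conditions. Fix $C^*=4C_{coef}+1$, a time $t<T_c$ with $\Lambda(t)\le C^*$, two admissible initial conditions $(X_0,q_0)$ and $(Y_0,q_0')$, and the corresponding strong solutions on $[T-t,T]$. By Lemma \ref{lemma: Z bound by lip}, both $Z^\varphi$ are bounded by $C^*$. We may therefore replace the coefficients by their truncated versions $F^M,G^M,H_z^M,L^M$ with $M=C^*$, exactly as in the proof of Theorem \ref{lemma: uniqueness lip sol}. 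These are globally Lipschitz with constant $c_l=C_{coef}+C^*+\omega(C^*)$, which depends only on $C_{coef}$ and $\omega$.

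\textbf{Step 2: forward and backward estimates.} Apply Itô's formula to the squared differences. For the forward part, writing $\delta X=X-Y$ and similarly for the other processes,
\[
\|\delta X_s\|^2+\|\delta q_s\|^2\le C\|\delta X_0,\delta q_0\|^2+C\int_{T-t}^s\big(\|\delta X\|^2+\|\delta q\|^2+\|\delta U\|^2+|\delta\varphi|^2\big)+C\int_{T-t}^s\|\delta Z^\varphi\|^2 .
\]
Conditional laws are handled through $\mathcal{W}_2(\mathcal{L}(X_s|\mathcal{F}^0_s),\mathcal{L}(Y_s|\mathcal{F}^0_s))^2\le\mathbb{E}[|\delta X_s|^2\mid\mathcal{F}^0_s]$. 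For the backward part, the terminal values are controlled by $C_{coef}$ times the forward difference at $T$, and the term $\sqrt{2\sigma^0}\,Z^\varphi$ in the martingale part yields
\[
\sup_s\|\delta U_s\|^2+\sup_s\|\delta\varphi_s\|^2+\int\|\delta Z\|^2+\sigma^0\int\|\delta Z^\varphi\|^2\le C_{coef}^2\,\mathrm{e}^{Ct}\|\delta X_T,\delta q_T\|^2+Ct\,\big(\text{forward differences}\big).
\]
Here $\sigma^0>0$ is used to absorb the forward term in $\delta Z^\varphi$. Combining the two estimates gives a time $t^*$, depending only on $c_l$ and $\sigma^0$, such that for $t<t^*$ the coupled terms can be absorbed. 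This yields
\[
\|\delta U_{T-t}\|^2+\|\delta\varphi_{T-t}\|^2\le (2C_{coef}+\tfrac12)^2\,\|(\delta X_0,\delta q_0)\|^2 ,
\]
where the constant is made strictly smaller than $C^*$ by shrinking $t^*$. For the finite-dimensional characteristics \eqref{eq: caracteristics in x fbsde with major} with different $x$ but the same measure flow, the same argument gives the $x$-Lipschitz bound.

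\textbf{Step 3: closing the bootstrap.} Using $\varphi_{T-t}=\varphi(t,q_0,\mathcal{L}(X_0|\mathcal{F}^0_{T-t}))$ and choosing deterministic $q_0$ together with $X_0,Y_0$ coupled optimally, we obtain $\|(U,\varphi)(t,\cdot)\|_{Lip}\le 2C_{coef}+\tfrac12<C^*$; the $x$-Lipschitz constant comes from the $x$-characteristics. Since $\Lambda$ is finite and the estimate depends only on the value at the current time, a continuity argument shows that $\{t<t^*\wedge T_c:\Lambda(t)\le C^*\}$ is all of $[0,t^*\wedge T_c)$. Continuity of $\Lambda$ is not guaranteed, so the argument should be phrased as a sup over the set where the bound holds, possibly with a small loss of margin.

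\textbf{Main obstacle.} The delicate point is exactly this circularity. The truncation level, and hence $c_l$, $t^*$ and $C^*$, must be fixed before knowing that $\|\nabla_q\varphi\|_\infty\le C^*$. If openness of the good set fails because $\Lambda$ may jump, the first fallback is to apply Step 2 directly with $M=\Lambda(T_c')$ on $[0,T_c']$. One then shows that the resulting bound does not actually depend on $M$ on the short interval, since Step 2 only needs $Z^\varphi$ bounded by the a priori constant. A second fallback is to work with solutions of the truncated system, which coincide with the original ones by the uniqueness in Theorem \ref{lemma: uniqueness lip sol}.
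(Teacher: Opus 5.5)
Your skeleton is the paper's. You compare two solutions of \eqref{eq: def lip sol system}, use small-time It\^o/Gronwall estimates, and convert a Lipschitz bound on $\varphi$ into a bound on $Z^\varphi$ via Lemma \ref{lemma: Z bound by lip}. However, the circularity that carries the content of the lemma is not resolved, and the final Lipschitz bound on $U$ is incomplete.

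\textbf{Closing the circularity.} In your main line, truncating at $M=C^*$ makes Step 2 available on $[0,t]$ only once $\Lambda(t)\le C^*$ is known. Closedness of the good set is fine, since Lemma \ref{lemma: Z bound by lip} is only needed a.e.\ in time. Openness is not: nothing lets you pass from $\Lambda(\tau)\le C^*$ to $\Lambda(\tau+\epsilon)\le C^*$, because no time regularity of the Lipschitz constant is known. Your first fallback is wrong as stated: with $M=\Lambda(T_c')$, both $t^*$ and the constant depend on $\omega(M)$.

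Your second fallback is the right idea, but the decisive step is missing. Solutions of the truncated system solve the original one only if their $Z^\varphi$ stays below $C^*$, and this must be proved. It can be done as follows:
\begin{itemize}
\item The truncated coefficients are globally Lipschitz with a data-dependent constant, so Step 1 of Theorem \ref{thm: existence mfgmp} gives a Lipschitz solution $(\hat U,\hat\varphi)$ of the truncated problem on a data-dependent interval.
\item Your Step 2 applies to it \emph{unconditionally} and gives $\|\nabla_q\hat\varphi(t,\cdot)\|_\infty<C^*$.
\item By Lemma \ref{lemma: Z bound by lip}, the truncation is then inactive along its characteristics, so $(\hat U,\hat\varphi)$ is a Lipschitz solution of the original problem.
\item By Theorem \ref{lemma: uniqueness lip sol}, it coincides with $(U,\varphi)$.
\end{itemize}
That would be a clean, bootstrap-free route, at the price of invoking the existence theory inside an a priori estimate.

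The paper never truncates. It keeps the time-dependent bound $|Z^\varphi_s|\le u(T-s)$, with $u(t)=\sup_{s\le t}\|(U,\varphi)(s,\cdot)\|_{Lip,H}$, so the Gronwall exponent contains $\int_0^t\omega^2(u(s))\,ds$. This integral is continuous in $t$ even if $u$ jumps, because $u$ is bounded. Hence the time $t_1$ at which it reaches a fixed threshold is positive. On $[0,t_1]$ the estimates give $u(t)\le C_{coef}+Ct$. Comparing with $t_3$, defined through $\omega^2(C_{coef}+Cs)$, then bounds $t_1$ from below by a time depending only on the data.

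A smaller point: as written, your forward bound carries $C\int\|\delta Z^\varphi\|^2$ with $C$ not small. Absorbing it into $\sigma^0\int\|\delta Z^\varphi\|^2$ requires that coefficient to be $O(t)$ (via Cauchy--Schwarz), or $\sigma^0/(2C_{coef})$ at the price of a $\sigma^0$-dependent exponent, as in the paper.

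\textbf{Measure dependence of $U$.} Optimal coupling controls $U(t,X_0,q_0,\mathcal{L}(X_0))-U(t,Y_0,q_0',\mathcal{L}(Y_0))$ only in $L^2$. Combined with the $x$-Lipschitz bound, this gives at best an $L^2(\mu)$ estimate, not $|U(t,x,q,\mu)-U(t,x,q,\nu)|\le C\mathcal{W}_2(\mu,\nu)$ for every $x\in\reels^d$. For instance, $U(x,\mu)=\min(1,\mathrm{dist}(x,\mathrm{supp}\,\mu))$ has identically zero lift and is $1$-Lipschitz in $x$, yet it is discontinuous in $\mu$.

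The paper fixes $x$ and runs the characteristics \eqref{eq: caracteristics in x fbsde with major} with the two \emph{different} input flows: $(q,\varphi,Z^\varphi,\mathcal{L}(X,U|\mathcal{F}^0))$ and its counterpart from the second solution. It controls the difference using the bound on $\sigma^0\int\|\delta Z^\varphi\|^2$ and the forward differences from the main estimate. The same argument with $q_0\neq q_0'$ handles the $q$-dependence. You only use these characteristics with a common flow, which yields the $x$-Lipschitz bound alone. Your Step 2 already contains the needed control of $\int\|\delta Z^\varphi\|^2$, so the repair is within reach.
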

\begin{proof}
Let us first introduce the notation 
\[\forall t<T_c, \quad u(t)=\sup_{s\in [0,t]}\|(U,\varphi)(s,\cdot)\|_{Lip,H}.\]
we now fix $t\in [0,T_c)$ and consider two solutions $(X_s,q_s,U_s,\varphi_s,Z_s,Z^\varphi_s)_{s\in [T-t,T]}$, $(\tilde{X}_s,\tilde{q}_s,\tilde{U}_s,\tilde{\varphi}_s,\tilde{Z}_s,\tilde{Z}^\varphi_s)_{s\in [T-t,T]}$ to \eqref{eq: def lip sol system}, respectively with initial condition $(X_0,q_0)$ and $\tilde{X}_0,\tilde{q}_0$. 

Standard Gronwall type estimates yield 
\begin{gather*}\forall s\in [T-t,T],\\
     \|(X_s,q_s)-(\tilde{X}_s,\tilde{q}_s)\|^2\\
    \leq \left(\|(X_0,q_0)-(\tilde{X}_0,\tilde{q}_0)\|^2+\frac{\sigma^0}{2C_{coef}}\int_{T-t}^s \|Z^\varphi_u-\tilde{Z}^\varphi_u\|^2du \right)e^{ C_1(t+\int_{T-t}^s \omega^2(u(t-r))dr)},
\end{gather*}
for a constant $C_1$ depending on $\sigma^0$ and $C_{coef}$ only.
On the other hand, by Ito's Lemma we deduce that 
\begin{gather*}\|(U,\varphi)(T-t,X_0,q_0,\mathcal{L}(X_0))-(U,\varphi)(T-t,\tilde{X}_0,\tilde{q}_0,\mathcal{L}(\tilde{X}_0))\|^2+\sigma^0\int_{T-t}^T \|Z^\varphi_s-\tilde{Z}^\varphi_s\|^2ds\\
\leq C_{coef}\|(X_t,q_t)-(\tilde{X}_t,\tilde{q}_t)\|^2+C_2\int_{T-t}^T (1+\omega^2(u(t-s)))\|(X_s,q_s)-(\tilde{X}_s,\tilde{q}_s)\|^2ds,
\end{gather*}
for a constant $C_2$ with the same dependencies as $C_1$. Letting 
\[t_1=\sup \left\{t\in [0,T_c], \int_0^t \omega^2(u(s))ds\leq 1\right\},\]
By assumption on $(U,\varphi)$, we already know that $t_1>0$. Letting $T_n=\frac{nT_c}{n+1}$ for some $n\in \mathbb{N}, n\geq 1$, we deduce that for all $t\leq t_1\wedge T_n$ there exists a constant $C$ depending on $\sigma^0,C_{coef}$ only such that
\begin{gather*}\|(U,\varphi)(t,X_0,q_0,\mathcal{L}(X_0))-(U,\varphi)(t,\tilde{X}_0,\tilde{q}_0,\mathcal{L}(\tilde{X}_0))\|^2+(\sigma^0-Ct)\int_{T-t}^T \|Z^\varphi_s-\tilde{Z}^\varphi_s\|^2ds\\
\leq (C_{coef}+Ct)\left(\|(X_0,q_0)-(\tilde{X}_0,\tilde{q}_0)\|^2\right).
\end{gather*}

It follows that for $t\leq t_2=t_1\wedge T_n\wedge \frac{\sigma^0}{C}$
\[\frac{\|(U,\varphi)(t,X_0,q_0,\mathcal{L}(X_0))-(U,\varphi)(t,\tilde{X}_0,\tilde{q}_0,\mathcal{L}(\tilde{X}_0))\|^2}{\|(X_0,q_0)-(\tilde{X}_0,\tilde{q}_0)\|^2}\leq (C_{coef}+Ct).\]
Since this is true for any initial condition, we deduce that for $t\leq t_2$
\[\|(U,\varphi(t,\cdot))\|_{Lip,H}\leq C_{coef}+Ct.\]
Let us fix 
\[t_3=\sup \left\{t\in [0,T_c], \int_0^t \omega^2(C_{coef}+Cs)ds\leq 1\right\}, \]
clearly $t_3\leq t_1$ and it is independent of $\left(\|(U,\varphi(t,\cdot))\|_{Lip}\right)_{t\in [0,T_c)}$. Finally letting $t^*_n=t_3\wedge \frac{\sigma^0}{2C}\wedge T_n$, we get
\[\forall t\leq t^*_n,\quad \|(U,\varphi(t,\cdot))\|_{Lip}\leq C_{coef}+Ct,\]
and that for any $t\leq t^*_n$, and for any initial condition $(X_0,q_0), (\tilde{X}_0,\tilde{q_0})$
\[\frac{\sigma^0}{2} \int_{T-t}^T\|Z^\varphi_s-\tilde{Z}^{\varphi}_s\|^2ds\leq (C_{coef}+Ct)\|(X_0,q_0)-(\tilde{X}_0,\tilde{q}_0)\|^2.\]
In particular, by a direct Corollary of Lemma 2.3 in \cite{disp-monotone-1}, $U$ is Lipschitz in $x$. 
Fixing $x\in \reels^d$ and $q_0=\tilde{q}_0\in H$, we define $(X^x_s,U^x_s)_{s\in [T-t,T]}$ (respectively $(\tilde{X}^x_s,\tilde{U}^x_s)_{s\in [T-t,T]}$) to be the solution of \eqref{eq: caracteristics in x fbsde with major} with $(Z^\varphi_s,\mathcal{L}(X_s,U_s|\mathcal{F}^0_s))_{s\in [T-t,T]}$ (respectively $(\tilde{Z}^\varphi_s,\mathcal{L}(\tilde{X}_s,\tilde{U}_s|\mathcal{F}^0_s))_{s\in [T-t,T]}$).
Since $U$ is Lipschitz in $x$, this system can be viewed as a decoupled forward backward system, moreover we already know that $\|\nabla_q\varphi\|_\infty$ is bounded and hence have bounds in $L^\infty(\Omega, L^\infty([T-t,T],\reels^{d^0}))$ for $(Z^\varphi_s,\tilde{Z}^\varphi_s)_{s\in [T-t,T]}$ by Lemma \ref{lemma: Z bound by lip}. It follows by standard gronwall type argument as well as the above estimates that 
\[\forall s\in [T-t,T], \|(X^x_s,U^x_s)-(\tilde{X}^x_s,\tilde{U}^x_s)\|\leq (C_{coef}+Ct)\|X_0-\tilde{X}_0\|,\]
for possibly a larger constant $C$. Since 
\[U^x_0=U(T-t,x,\mathcal{L}(X_0)), \tilde{U}^x_0=U(T-t,x,\mathcal{L}(\tilde{X}_0)),\]
and that this is true for any initial condition $X_0,\tilde{X}_0$ and any $t\leq t^*_n$ we deduce that $U$ is also Lipschitz on $\mathcal{P}_2(\reels^d)$. The end results follows by letting $n$ goes to infinity.
\end{proof}
We now have all the tools at hands to state the main existence result on Lipschitz solutions. 
\begin{thm}
\label{thm: existence mfgmp}
Under Hypothesis \ref{hyp: locally lipschitz in z}
\begin{enumerate}
    \item[-] There exists a time $t>0$ for which there exists a Lipschitz solution to MFGMP$(T,\psi,g,H_z,L,F,G)$ on $[0,t]$
    \item[-] There exists a maximal time of existence $T_c$ and an associated Lipschitz solution, in the sense that for any Lipschitz solution to MFGMP$(T,\psi,g,H_z,L,F,G)$ defined on $[0,\tau)$ for some $\tau\in [0,T]$ 
    \[\tau \leq T_c.\]
    \item[-] Either $T_c=T$ or $T_c<T$ and
    \[\limsup_{t\to T_c}\|(U,\varphi)(t,\cdot)\|_{Lip}=+\infty.\] 
    \item[-] If $T_c=T$ and 
    \[\limsup_{t\to T}\|(U,\varphi)(t,\cdot)\|_{Lip}<+\infty,\]
    then there exists a Lipschitz solution to MFGMP$(T,\psi,g,H_z,L,F,G)$ on $[0,T]$. 
\end{enumerate}
\end{thm}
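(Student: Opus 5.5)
The plan is to build the solution by the standard small-time iteration for FBSDEs. The two results that drive it are the local a priori Lipschitz estimate of Lemma \ref{lemma: local lipschitz estimate} and the gluing Lemma \ref{lemma: bootstraping lip sol}.

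\textbf{Step 1: short-time existence with smooth data.} First assume that the coefficients $(F,G,H_z,L)$ are globally Lipschitz, which can be arranged by truncating $z$ as in the proof of Theorem \ref{lemma: uniqueness lip sol}.

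- For a short horizon $t\le t_0(c_l)$, the contraction argument of \cite{probabilistic-mfg} (Theorem 5.4) gives a unique strong solution of \eqref{eq: def lip sol system} for every admissible initial condition.
- Define $(U,\varphi)(T-s,\cdot)$ by evaluating $(U_s,\varphi_s)$ from initial data at time $s$. Deterministic $q$ and a deterministic law $\mathcal{L}(X_0)$ make this well defined. The finite-dimensional system \eqref{eq: caracteristics in x fbsde with major} is handled the same way.
- The Lipschitz property follows from the stability estimate of the contraction.
- The representation property follows from uniqueness combined with the flow (Markov) property, after conditioning on $\mathcal{F}^0_{T-t}$ so that random $q_0$ and conditional laws are covered.
- The bound $|Z^\varphi|\le C$ then follows from Lemma \ref{lemma: Z bound by lip}.

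\textbf{Step 2: remove the truncation.} Choose the truncation level $M > C^*$, where $C^*$ is the constant of Lemma \ref{lemma: local lipschitz estimate}. This constant depends only on $C_{coef}$, $\omega$ and $\sigma^0$, not on $M$. Lemma \ref{lemma: Z bound by lip} then gives $|Z^\varphi|\le \|\nabla_q\varphi\|_\infty\le C^*<M$ on $[0,t^*\wedge t_0]$. So the truncated solution solves the original system, which proves the first bullet.

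I expect the main obstacle here to be the possible dependence of the existence time $t_0$ on $c_l$, which itself depends on $M$. I would first try to avoid this by noting that the constant in Lemma \ref{lemma: local lipschitz estimate} is uniform. If a direct $M$-independent contraction fails, the fallback is to mollify the coefficients and apply the stability Theorem \ref{thm: stability lip sol}. That theorem only requires uniform Lipschitz bounds, and these are supplied by Lemma \ref{lemma: local lipschitz estimate} on a uniform interval.

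\textbf{Step 3: maximal time and the blow-up alternative.} Let $T_c$ be the supremum of the times $\tau$ for which a Lipschitz solution exists on $[0,\tau)$.

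- By uniqueness (Theorem \ref{lemma: uniqueness lip sol}), solutions on nested intervals agree. Gluing them gives a solution on $[0,T_c)$, which is the second bullet.
- Suppose $T_c<T$ but $\limsup_{t\to T_c}\|(U,\varphi)(t,\cdot)\|_{Lip}\le K<\infty$. Apply Step 1 with terminal data $(U,\varphi)(\tau,\cdot)$ for $\tau$ close to $T_c$. The data have Lipschitz constant at most $K$, so Step 1 applies with $C_{coef}$ replaced by $\max(C_{coef},K)$.
- This gives a solution on an interval $[0,t^*(K))$ whose length is independent of $\tau$. Choose $\tau>T_c-t^*(K)$.
- Lemma \ref{lemma: bootstraping lip sol} then extends the solution beyond $T_c$, a contradiction. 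This is the third bullet.

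\textbf{Step 4: extension to the closed interval.} If $T_c=T$ with bounded Lipschitz norms, the same restart from $\tau=T-t^*(K)/2$ produces a solution on $[0,T]$ closed. Alternatively, Theorem \ref{thm: stability lip sol} gives existence of the limit $(U,\varphi)(T,\cdot)$ as $t\to T$, using the uniform Hölder-in-time bound. This proves the fourth bullet.
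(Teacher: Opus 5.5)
Your proof is correct and shares the paper's overall architecture: small-time existence for Lipschitz coefficients via \cite{probabilistic-mfg}, truncation in $z$, the uniform estimate of Lemma~\ref{lemma: local lipschitz estimate}, and restarts glued by Lemma~\ref{lemma: bootstraping lip sol}. It differs in how the truncation is removed.

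The paper truncates at every level $n$. It uses the blow-up alternative for Lipschitz coefficients together with Lemma~\ref{lemma: local lipschitz estimate} to place all truncated solutions on a common interval $[0,t^*]$ with Lipschitz constant at most $C^*$. It then passes to the limit $n\to\infty$ through Theorem~\ref{thm: stability lip sol}, which relies on an Arzel\`a--Ascoli compactness argument plus uniqueness to identify the limit. This limit is taken again at every restart near $T_c$.

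You instead fix one level $M>C^*$ and use Lemma~\ref{lemma: Z bound by lip} to get $|Z^\varphi|\le C^*<M$. The truncation is therefore never active, and the truncated Lipschitz solution is already a Lipschitz solution of the original problem. The characteristics \eqref{eq: caracteristics in x fbsde with major} use the same $Z^\varphi$, so they are covered too. Your route is more elementary: it needs neither the stability theorem nor any compactness. The paper's limit procedure is more robust, since it would also handle approximations where $Z^\varphi$ cannot be pointwise controlled against the approximation parameter. Here, though, once $t_n\ge t^*$ and the bound $C^*$ are known, the limit is superfluous.

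The obstacle you flag is harmless. Truncation is $1$-Lipschitz and does not increase $|z|$, so the truncated coefficients satisfy Hypothesis~\ref{hyp: locally lipschitz in z} with the same $C_{coef}$ and $\omega$. Hence $C^*$, then $M$, and therefore $t_0(C_{coef}+\omega(M))$ depend only on $(C_{coef},\omega,\sigma^0)$, and on $K$ at a restart. That is all your Step~3 needs. Alternatively, as in the paper, the blow-up alternative for Lipschitz coefficients combined with the uniform bound $C^*$ shows directly that the truncated solution exists on $[0,t^*)$. Your mollification-plus-stability fallback is essentially the paper's route and is not needed.
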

\begin{proof}
\noindent\textit{Step 1: all coefficients are Lipschitz}

We assume first that $\psi,g,H_z,L,f,g$ are Lipschitz with constant $L$. This assumption will be relaxed later. In this particular case, this result is a direct consequence of previous results from \cite{probabilistic-mfg} as we now explain.

First, the existence on a sufficiently short time interval follows from Theorem 5.4 of \cite{probabilistic-mfg} and the subsequent results Proposition 5.7 and 5.8. They first show that there exists a time $T_L$ depending only on $L$ such that for any $t\leq T_L$ there exists a unique solution to \eqref{eq: def lip sol system} for any initial condition $(X_0,q_0)$. This is done by a contraction argument which is quite standard in the theory of FBSDEs. Since there exists exactly one solution, the pair $(U,\varphi)$ is uniquely defined on $[0,T_L]$ by solving the FBSDE for any initial condition. It then remains to check that the pair thereby defined is indeed a decoupling field for the FBSDE \eqref{eq: def lip sol system} on $[0,T_L]$, which is the subject of Proposition 5.8. While they only show that for any initial condition $(X_0,q_0)$ and any $t\in [0,T_L]$ the solution of \eqref{eq: def lip sol system} satisfies 
\[\esp{\sup_{s\in [T-t,T]} |(X_s,q_s,U_s,\varphi_s)|+\int_{T-t}^T |(Z_s,Z^\varphi_s)|^2 ds}<+\infty,\]
by a variant of Lemma \ref{lemma: Z bound by lip}, this is sufficient to deduce that 
\[\exists C,  \quad |Z^\varphi_s|<C \quad a.e \text{ in }[T-t,T]\quad  a.s.\]

Next, we turn to the statement on the maximal time of existence. Let us assume that there exists a solution to MFGMP$(T,\psi,g,H_z,L,F,G)$ on $[0,t]$ for some $t<T$. If $\limsup_{s\to t}\|(U,\varphi)(s,\cdot)\|\leq C_{Lip}<+\infty$ then there exists a decreasing sequence $(\delta_n)_{n\in \mathbb{N}}\subset \reels^+$ such that 
\[\lim_{n\to \infty} \delta_n=0,\]
and 
\[\forall n\in \mathbb{N}, \quad \|(U,\varphi)(t-\delta_n,\cdot)\|_{Lip}\leq C_{Lip}.\]
By the above existence result, there exists a time $t^*$ depending only on $C_{Lip},T$ and $L$ such that for any $n\in \mathbb{N}$, there exists a Lipschitz solution to MFGMP$(T-t+\delta_n,\varphi(t-\delta_n,\cdot),U(t-\delta_n,\cdot),H_z,L,F,G)$ on $[0,t^*]$. In particular, for $n$ sufficiently large 
\[t-\delta_n+t^*>t+\frac{t^*}{2}\]
and by Lemma \ref{lemma: bootstraping lip sol}, we can construct a Lipschitz solution to MFGMP$(T,\psi,g,H_z,L,F,G)$ on $[0,t+\frac{t^*}{2}]$. Clearly so long as $t<T$ and $\liminf_{s\to t} \|(U,\varphi)(s,\cdot)\|_{Lip}<+\infty$ this argument can be repeated. In particular, if 
\[\limsup_{t\to T} \|(U\varphi)(t,\cdot)\|_{Lip}<+\infty,\]
then for $\delta$ sufficiently small, there exists a Lipschitz solution to MFGMP$(T-\delta,\varphi(T-\delta,\cdot),U(T-\delta),H_z,L,F,G)$ on $[0,\delta]$ and by a bootstraping argument, there exists a Lipschitz solution to \linebreak MFGMP$(T,\psi,g,H_z,L,F,G)$ on $[0,T]$.

\noindent\textit{Step 2: general coefficients}

Hypothesis \ref{hyp: locally lipschitz in z} is now in force. for $f=H_z,L,F,G$, we define 
\[f^n(x,q,u,z,\mu)=f(x,q,u,z\wedge n\vee (-n),\mu).\]
so that $\psi,g,H_z^n,L^n,F^n,G^n$ are Lipschitz with constant $L_n=C_{coef}+\omega(n)$, and satisfy Hypothesis \ref{hyp: locally lipschitz in z} uniformly in $n\in \mathbb{N}$. Moreover 
\[\forall z\geq 0, \quad \lim_{n\to \infty }\|(H_z,L,F,G)-(H_z^n,L^n,F^n,G^n)\|_{\infty,z}=0.\]
By the above results for Lipschitz coefficients, for fixed $n\in \mathbb{N}$ there exists a Lipschitz solution to MFGMP$(T,\psi,g,L^n_p,L^n,F^n,G^n)$ on $[0,t_n]$ for some $t_n$ depending on $L_n$. Moreover by Lemma \eqref{lemma: local lipschitz estimate}, there exists $t^*>0$ and a constant $C^*$ depending only on $C_{coef},\omega,\sigma^0$ such that 
\[\forall n \in \mathbb{N}, \quad \forall t<t^*\wedge t_n, \|(U,\varphi)(t,\cdot)\|_{lip}\leq C^*.\]
In particular, following step 1 this implies that 
\[\forall n\in \mathbb{N} \quad t_n\geq t^*.\]
By Theorem \ref{thm: stability lip sol}, on $[0,t^*]$, $(U^n,\varphi^n)_{n\in \mathbb{N}}$ converges to a Lipschitz solution $(U,\varphi)$ of MFGMP$(T,\psi,g,H_z,L,F,G)$. which ends to show the first statement on local existence. 

As for statements on $T_c$, first if there exist two Lipschitz solutions $(U^1,\varphi^1)$, $(U^2,\varphi^2)$ defined respectively on $[0,T_1]$ and $[0,T_2]$ then by Theorem \ref{lemma: uniqueness lip sol}, they must coincide on the interval $[0,T_1\wedge T_2]$. We define 
\[T_c=\sup \left\{t\in (0,T],\quad \text{there exists a Lipschitz solution } (U,\varphi) \text{ on } [0,t)\right\}.\]
Let us assume that 
\[T_c<T,\]
and that $\liminf_{s\to T_c}\|(U,\varphi)(s,\cdot)\|_{Lip}\leq C_{T_c}<+\infty$. In this case, proceeding as in step 1, we can build a sequence of Lipschitz solutions $(U^n,\varphi^n)$ to MFGMP$(T-T_c+\delta,\varphi(T_c-\delta,\cdot),U(T_c-\delta,\cdot),H_z^n,L^n,F^n,G^n)$ defined on $[0,t^*]$ for some $t^*$ depending only on $C_{T_c},C_{coef},\omega,\sigma^0$ and $t^*>2\delta$ as we did when coefficient were Lipschitz. Then by Theorem \ref{thm: stability lip sol}, there exists a Lipschitz solution to MFGMP$(T,\psi,g,H_z,L,F,G)$ on $[0,t+\frac{t^*}{2}]$. The rest of proof follows by using arguments presented in step 1.
\end{proof}
\begin{remarque}
The third statement is necessary. The fact that there exists a Lipschitz solution on $[0,T)$ does not necessary means it can be extended to the whole interval. This construction is analogous to the Cauchy Lipschitz Theorem for ordinary differential equations. Although there exists a solution to 
\[\frac{dy}{dt}=y^2(t), \quad y(0)=1.\]
on $[0,1)$ given by 
\[y(t)=\frac{1}{1-t},\]
it cannot be extended into a continuous solution to this ODE defined on $[0,1]$.
\end{remarque}
\begin{remarque}
    Essentially, we have proved that so long as the coefficients are Lipschitz and $\sigma^0>0$, not only does there exist a unique solution to the major-minor problem on a sufficiently short time interval, but so long as we have Lipschitz a priori estimates on the control of minor players and the value function of the major player there exists a unique solution. In itself, this Theorem comes from the regularizing effect of common noise on the major player. Our analysis, and even the formulation of the problem relied extensively on the assumption $\sigma^0>0$. Overall, this result is quite general and much like the analogous notion for mean field games \cite{lipschitz-sol}, we believe it should prove useful for the study of major-minor mean field games even outside of the context of this article. 
\end{remarque}

\subsection{Nash equilibrium}
\label{subsection: Nash}
\subsubsection{Formulation of the equilibrium}
We now detail the link between Lipschitz solutions and the Nash equilibrium in MFGs of controls with a major player. First, we need to properly formulate the problem of mean field games with a major player. Given a distribution of players $(m_s)_{s\in [0,T]}:\Omega^0\to \mathcal{P}_2([0,T],\reels^{2d})$ and a control $(\alpha^0_t)_{t\in [0,T]}$ progressively adapted to the filtration of common noise, minor players aim at minimizing
\[
\begin{array}{c}
J(\alpha,\alpha^0,(m_t)_{t\in [0,T]})= \displaystyle \espcond{\int_0^T L(X^\alpha_s,\alpha_s,q^{\alpha^0}_s,m_s)ds+u(T,X^\alpha_T,q^{\alpha^0}_T,\mu_T)}{\mathcal{F}^0_T},\\
dX^\alpha_s=-\alpha_sds+\sqrt{2\sigma_x}dB_s,
\end{array}
\]
where $\mu_T$ indicates the marginal of $m_T$ over the first $d$ variables. To present the problem of the major player, we restrict to minor players with feedback controls of the form 
\[(t,x,q,\mu)\mapsto \phi(t,x,q,\mu),\]
for deterministic and continuous functions $\phi$. In this case the major player pays the cost
\begin{gather*}
J^0(\alpha^0,\phi)=\esp{\int_0^T L^0(q^{\alpha^0}_s,\alpha^0_s,m^\phi_s)ds+\varphi(T,q^\alpha_T,\mu^\phi_T)}\\
dq^{\alpha^0}_t=-\alpha^0_tdt+\sqrt{2\sigma^0}dW^0_t,\\
dX^\phi_t=-\phi(t,X^\phi_t,q^{\alpha^0}_t,m^\phi_t)dt+\sqrt{2\sigma}dB_t,\\
m^\phi_t=\mathcal{L}(X^\phi_t,\phi(t,X^\phi_t,q^{\alpha^0}_t,\mathcal{L}(X^\phi_t|\mathcal{F}^0_t))|\mathcal{F}^0_t).
\end{gather*}
Let us insist that $(m^\varphi_t)_{t\in [0,T]}$ indicates the joint law of the state and controls of the minor players, as we are considering mean field games of controls. In general, the equilibrium condition is formulated by requiring both the major and minor players controls to be in feedback form \cite{mfg-with-common-noise,probabilistic-mfg}. However, even if there exists a Lipschitz solution, the control of the major player cannot, in general, be expressed in terms of such a feedback function. Consequently, we define admissible controls in the following way
\begin{definition}
    \begin{enumerate}
        \item[-]A control $\alpha^0$ is admissible for the major player if $\alpha^0\in \mathcal{H}^T_{\mathcal{F}^0}$
        \item[-] A feedback function $\phi$ is an admissible markovian control for the minor players if for any admissible control $\alpha^0$, there exists a unique strong solution to 
        \[\left\{\begin{array}{c}
            dX_t=-\alpha^*_t(\alpha^0,\phi)dt+\sqrt{2\sigma}dB_t, \quad X_0=X\\
            dq_t=-\alpha^{0}_tdt+\sqrt{2\sigma^0}dW^0_t,\quad q_0=q\\
            \alpha^*_t(\alpha^0,\phi)=\phi(t,X_t,q_t,m^*_t),\\
            m^*_t(\alpha^0,\phi)=\mathcal{L}(X_t,\alpha^*_t|\mathcal{F}^0_t),
        \end{array}\right.\]
which satisfies $\alpha^*(\alpha^0)\in \mathcal{H}^T_\mathcal{F}$.
    \end{enumerate}
\end{definition}
\begin{remarque}
    In particular any Lipschitz function $\phi$ is an admissible feedback control for minor players. 
\end{remarque}

we define a Nash equilibrium as such
\begin{definition}
    \label{def: nash equilibrium}
    A couple $(\phi^*,\alpha^{*,0}_s)_{s\in [0,T]}$ is a Nash equilibrium for the mean field game with a major player if 
    the following holds 
    \begin{enumerate}
        \item[-] The pair $(\phi^*,\alpha^{*,0})$ is admissible 
        \item[-]
       \[\alpha^*(\alpha^{0,*},\phi^*)\in \text{arginf}_\alpha J(\alpha,\alpha^{*,0},(m^*_t(\alpha^{0,*},\phi^*))_{t\in [0,T]}).\]
        \item[-] \[\alpha^{*,0}\in \text{arginf}_{\alpha^0} J^0(\alpha^0,\phi^*).\]
    \end{enumerate}
\end{definition}
The first condition ensures that the optimal control of the major player given the feedback control is well posed. Given a control for the major player $\alpha^0$, the optimality condition for minor players is that of standard mean field games 
\[(\alpha^*_t)_{t\in [0,T]}\in \text{arginf}_{\alpha} J\left(\alpha,\alpha^0,\left(\mathcal{L}(X^\alpha_t,\alpha_t)\right)_{t\in [0,T]}\right),\]
except for the fact that we have forced their controls to be given in feedback form through $\phi^*$. This additional requirement allows for a clear formulation of the equilibrium condition for the major player. This asymmetry reflects the fact that whenever the major player makes a decision it impacts all players and consequently also the mean field quantities. 
\begin{remarque}
    \label{remarque: mfg with dependency on control of major}
    It is also possible for the cost of minor players to depend on the control of the major player through their Lagrangian $L$. In this case, the definition of equilibrium can be adapted easily by considering feedback controls of the form 
    \[(t,x,q,\alpha_0,\mu)\mapsto \phi(t,x,q,\alpha_0,\mu).\]
    The equilibrium condition of the major player is also affected. A natural assumption is that the major player does not predict the change in the dynamics of players by changing its control and the equilibrium condition becomes 
    \begin{equation}
        \label{eq: optimal control problem of major}
    \left\{
        \begin{array}{c}
    \alpha^{*,0}=\text{arginf}_{\alpha^0}\displaystyle \esp{\int_0^T L^0(q^{\alpha^0}_s,\alpha^0_s,m^\phi_s)ds+\varphi(T,q^{\alpha^0}_T,\mu^\phi_T)}\\
dq^{\alpha^0}_t=-\alpha^0_tdt+\sqrt{2\sigma^0}dW^0_t,\\
dX^\phi_t=-\phi(t,X^\phi_t,q^{\alpha^0}_t,\alpha^{*,0}_t,m^\phi_t)dt+\sqrt{2\sigma}dB_t,\\
m^\phi_t=\mathcal{L}(X^\phi_t,\phi(t,X^\phi_t,q^{\alpha^0}_t,\alpha^{*,0}_t,\mathcal{L}(X^\phi_t|\mathcal{F}^0_t))|\mathcal{F}^0_t).
    \end{array}
\right.
\end{equation}
Beyond this technical detail, there is an additional conceptual difficulty in the case of MFGs of controls. Indeed since controls of minors players depends on the control of the major, if the dynamics of the major player also depend on the law of controls of individual players, then there is an additional fixed point in the definition of $\alpha^{*,0}$, which requires additional assumptions on the Lagrangian of the major player for the wellposedness of the problem. Such a phenomenon was already pointed out in \cite{MFG-MAJOR-LIONS}. We do not treat this additional technical point, and will assume instead in this section that controls of minor players do not depend on the control of the major player to avoid it. Although we do not treat this case directly, this problem does not arise if controls of minor players depend on the control of the major player when the state of the major player interact with the minor players only through the law of their state (and not of their control). This suggest that our framework can be extended to such models. 
\end{remarque}
\subsubsection{Link with Lipschitz solutions}
We now make the following assumption
\begin{hyp}
    \label{hyp: mfg with major optimality minor}
    For minor players
    \begin{enumerate}
        \item[-] The function $\alpha \mapsto L(x,\alpha,q,m)$ is convex uniformly in $(x,q,m)\in \reels^{d+d^0}\times \mathcal{P}_2(\reels^{2d})$.
        \item[-] The derivatives $\nabla_x u, \nabla_x L$ exists and define Lipschitz functions.
        \item[-] $u(T,\cdot)$ is bounded from below and there exists constants $c_1,c_2>0$ such that 
\[\forall (x,q,m,\alpha,), L(x,\alpha,q,m)\geq -c_1+c_2|\alpha|^2.\] 
        \item[-] $\nabla_\alpha L$ exists and is continuous, moreover the Hilbertian lift 
\[\alpha \mapsto \nabla_\alpha L(X,q,\alpha,\mathcal{L}(X,\alpha)),\]
has a well defined Lipschitz inverse $(X,q,\alpha)\mapsto \nabla_\alpha \tilde{L}^{-1}(X,q,\alpha)$ and there exists a Lipschitz function 
\[L_{inv}:\reels^d\times \reels^{d^0}\times \reels^d\times \mathcal{P}_2(\reels^{2d})\to \reels^d,\]
such that 
\[\forall (X,\alpha,q,\alpha^0)\in \mathcal{H}^{2d}\times \reels^{2m}, \quad \nabla_\alpha \tilde{L}^{-1}(X,q,\alpha)=L_{inv}(X,q,\alpha,\mathcal{L}(X,\alpha)),\]
    \end{enumerate}
\end{hyp}
\begin{remarque}
The last condition ensures that the additional fixed point condition in MFGs of control is well posed in the sense that for any $X,U\in \mathcal{H}^d, q\in \mathcal{H}^{d^0}$ there exists a unique $\alpha\in \mathcal{H}^d$ such that
\[\alpha=D_p H(X,q,U,\mathcal{L}(X,\alpha)),\]
which is given by
\[\alpha=\nabla_\alpha \tilde{L}^{-1}(X,q,U),\]
where $H$ indicates the Hamiltonian of minor players
\[H(x,q,p,\mu)=\sup_\alpha \left\{ \alpha\cdot p-L(x,q,\alpha,\mu)\right\}.\]
If $L$ does not depend on $\mathcal{L}(\alpha)$ then the last condition is true under standard assumptions (namely the strong convexity of $\alpha\mapsto L(X,q,\alpha,\mathcal{L}(X))$) and the inverse is simply given by $D_pH$. Otherwise, this is true as soon as the following strong monotonicity assumption holds in $\alpha$
\begin{gather*}
\exists c>0, \quad \forall (X,q,\alpha,\alpha')\in \mathcal{H}^{3d+d^0}\\
\esp{\left(\nabla_\alpha L(X,q,\alpha,\mathcal{L}(X,\alpha))-\nabla_\alpha L(X,q,\alpha',\mathcal{L}(X,\alpha'))\right)\cdot \left(\alpha-\alpha'\right)}\geq c\| \alpha-\alpha'\|^2,
\end{gather*}
and $L$ is Lipschitz in all other arguments, for exemple. Although our formulation in terms of an infinite dimensional inverse is somewhat unusual, let us insist that the wellposedness of the additional fixed point inherent to MFG of controls under displacement monotonicity is quite standard \cite{jackson2025quantitativeconvergencedisplacementmonotone}. Our formulation avoids the need for an implicit mapping and seems quite natural from a probabilistic standpoint. In fact, the function $L_{inv}$ is exactly given (see \cite{monotone-sol-meynard}) in terms of $H$ and $\nabla_\alpha \tilde{L}^{-1}$ by 
\[\forall (x,q,u,X,U)\in \reels^{2d+d^0}\times \mathcal{H}^{2d}, L_{inv}(x,q,u,\mathcal{L}(X,U))=D_p H(x,q,u,\mathcal{L}(X,\nabla_\alpha \tilde{L}^{-1}(X,q,U))),\]
meaning the implicit mapping often used in MFGs of controls \cite{ziad-kobeissi,mou2022meanfieldgamescontrols} is exactly given by 
\[(X,q,U)\mapsto \mathcal{L}(X,\nabla_\alpha \tilde{L}^{-1}(X,q,U)).\]
The additional dependency on the variable $q$ comes here from the fact that we are considering MFGs with a major player.
\end{remarque}
Before presenting the main result of this section, let us give this reminder on optimality conditions in optimal control. 
\begin{lemma}
    \label{lemma: optimality of minor}
Under Hypothesis \ref{hyp: mfg with major optimality minor}, for any  $\mathcal{F}^0-$adapted process $(q_t,m_t)_{t\in [0,T]}\subset \reels^{d^0}\times \mathcal{P}_2(\reels^{2d})$ such that 
\[\forall t\in [0,T] \quad \esp{q_t^2+\mathcal{W}^2_2(m_t,\delta_0)}<+\infty,\]
there exists an optimal control $(\alpha^*_t)_{t\in [0,T]}$ solving 
\begin{equation}
    \label{eq: problem of minor}
    \underset{\alpha}{\text{essinf}} \espcond{\int_0^T L(X^\alpha_s,q_s,\alpha_s,m_s)+u(T,X^\alpha_T,q_T,\pi_d m_T)}{\mathcal{F}^0_T}.\end{equation}
with \[X^\alpha_t=X-\int_0^t \alpha_s ds+\sqrt{2\sigma_x}B_t.\]
Moreover, letting $(U^{\alpha^*}_t)_{t\in [0,T]}$ be the unique solution of the BSDE
\begin{equation}
    \label{eq: bsde optimal minor mfgmp}
    U^{\alpha^*}_t=\nabla_x u(T,X^{\alpha^*}_T,q_T,\pi_d m_T)+\int_t^T \nabla_x L(X^{\alpha^*}_s,q_s,\alpha^*_s,m_s)ds+\int_t^T Z^{\alpha^*}_td(B_t,W^0_t),
\end{equation}
the following holds 
\[\|U^{\alpha^*}_\cdot-\nabla_\alpha L(X^{\alpha^*}_\cdot,q_\cdot,\alpha^*_\cdot,m_\cdot)\|_T=0.\]
\end{lemma}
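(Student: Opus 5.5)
The plan is to treat the problem \eqref{eq: problem of minor} as a standard stochastic control problem with random but exogenous coefficients: $(q_t,m_t)_{t\in[0,T]}$ are fixed $\mathcal{F}^0$-adapted processes, and the state equation $X^\alpha_t=X-\int_0^t\alpha_sds+\sqrt{2\sigma_x}B_t$ is linear in the control. Existence will come from the direct method in the Hilbert space $\mathcal{H}^T_{\mathcal{F}}$. The characterization of the optimizer will come from the stochastic maximum principle, written in its weak (first-order variational) form.

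\textbf{Existence.} First I check that the functional $\alpha\mapsto J(\alpha)=\esp{\int_0^T L(X^\alpha_s,q_s,\alpha_s,m_s)ds+u(T,X^\alpha_T,q_T,\pi_d m_T)}$ is well defined on $\mathcal{H}^T_{\mathcal{F}}$, with values in $(-\infty,+\infty]$. This follows from the lower bounds $L\geq -c_1+c_2|\alpha|^2$ and $u(T,\cdot)$ bounded from below, together with the moment assumption on $(q_t,m_t)$. The same bounds make $J$ coercive. I will take the unconditional expectation and minimize it; recovering the essential infimum of the conditional cost is done at the end by a standard localization argument, replacing controls on $\mathcal{F}^0_T$-measurable events.

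Next, $\alpha\mapsto X^\alpha$ is affine and continuous, and $L$ is convex in $\alpha$. The open point is convexity in $x$. I would use that $\nabla_x L$ and $\nabla_x u$ are Lipschitz, so the $x$-dependence is semiconvex with constant $\mathrm{Lip}(\nabla_x L)$. On its own this does not give convexity of $J$. Instead I will argue lower semicontinuity: for a minimizing sequence $\alpha^n\rightharpoonup\alpha^*$ weakly in $\mathcal{H}^T_{\mathcal{F}}$, integration in time gives $X^{\alpha^n}_t\rightharpoonup X^{\alpha^*}_t$. Combining this with Mazur's lemma and the convexity in $\alpha$ yields $J(\alpha^*)\leq\liminf J(\alpha^n)$, provided the $x$-terms pass to the limit. \textbf{This is the step I expect to be the main obstacle}, since it needs strong convergence of $X^{\alpha^n}$. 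My first attempt is compactness in time: $t\mapsto X^{\alpha^n}_t-\sqrt{2\sigma_x}B_t$ is bounded in $H^1([0,T],\mathcal{H})$. If that fails, I would fall back on the usual regime where $L$ is jointly convex in $(x,\alpha)$ and $u(T,\cdot)$ is convex in $x$, which is what the remaining Lipschitz-gradient assumptions are meant to support.

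\textbf{Optimality condition.} For the minimizer $\alpha^*$ and any $\beta\in\mathcal{H}^T_{\mathcal{F}}$, I differentiate $\epsilon\mapsto J(\alpha^*+\epsilon\beta)$ at $\epsilon=0$. The limit is justified by dominated convergence, using that $\nabla_x L$ and $\nabla_x u$ are Lipschitz and $\nabla_\alpha L$ is continuous. The variation of the state is $\delta X_t=-\int_0^t\beta_sds$, which gives
\[0=\esp{\int_0^T\left(\nabla_\alpha L(X^{\alpha^*}_s,q_s,\alpha^*_s,m_s)\cdot\beta_s+\nabla_xL(\cdots)\cdot\delta X_s\right)ds+\nabla_x u(T,\cdots)\cdot\delta X_T}.\]
Since $\nabla_x L$ and $\nabla_x u$ are Lipschitz, the BSDE \eqref{eq: bsde optimal minor mfgmp} is linear with square-integrable data, so it has a unique solution. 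Applying Itô's formula to $U^{\alpha^*}_t\cdot\delta X_t$, the stochastic integrals are true martingales and vanish in expectation, leaving
\[\esp{\nabla_xu(T,\cdots)\cdot\delta X_T+\int_0^T\nabla_xL(\cdots)\cdot\delta X_sds}=-\esp{\int_0^T U^{\alpha^*}_s\cdot\beta_sds}.\]
Substituting into the variational identity gives $\langle \nabla_\alpha L(X^{\alpha^*},q,\alpha^*,m)-U^{\alpha^*},\beta\rangle^T=0$ for every adapted $\beta$. Both processes are $\mathcal{F}$-progressively measurable, so taking $\beta=\nabla_\alpha L(X^{\alpha^*},q,\alpha^*,m)-U^{\alpha^*}$ yields $\|U^{\alpha^*}_\cdot-\nabla_\alpha L(X^{\alpha^*}_\cdot,q_\cdot,\alpha^*_\cdot,m_\cdot)\|_T=0$, as claimed.
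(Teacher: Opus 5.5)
The gap is in the existence step. The paper does not use a direct method there. It delegates existence to relaxed controls \cite{Djete} followed by mimicking \cite{Lacker-mimicking}, or to a direct construction of an optimal feedback. Your direct method does not close under Hypothesis \ref{hyp: mfg with major optimality minor}. There $L$ is convex only in $\alpha$, while $L$ and $u(T,\cdot)$ are merely $C^{1,1}$ in $x$, so $J$ is not weakly lower semicontinuous on $\mathcal{H}^T_{\mathcal{F}}$ and you genuinely need strong convergence of $X^{\alpha^n}$. The bound in $H^1([0,T],\mathcal{H})$ cannot give it. It yields equicontinuity in time but no compactness of the values, because $L^2(\Omega)$ is infinite dimensional and there is no compact embedding to invoke. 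For instance, let $\alpha^n_t=h_n\mathds{1}_{t\geq\delta}$, with $h_n$ a normalized $\sin(nB^1_\delta)$ times a fixed unit vector, so that $h_n\rightharpoonup 0$ and $\|h_n\|=1$. Then $X^{\alpha^n}_t=X-(t-\delta)^+h_n+\sqrt{2\sigma_x}B_t$ converges only weakly. Take $L=c|\alpha|^2$ and a terminal cost equal to $-|x|^2$ on a large ball (bounded below, Lipschitz gradient). One finds $\lim_n J(\alpha^n)=J(0)-(T-\delta)^2+c(T-\delta)$, up to an error vanishing with the radius, which lies strictly below the cost $J(0)$ of the weak limit once $T-\delta>c$. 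Your fallback (joint convexity in $(x,\alpha)$ and convexity of $u(T,\cdot)$) is an additional assumption that the hypothesis does not provide. What is missing is compactness in law rather than in $L^2(\Omega)$, which is the route the paper cites:
\begin{enumerate}
\item Pass to relaxed controls in a weak formulation that carries the common noise. Tightness comes from $L\geq -c_1+c_2|\alpha|^2$. Lower semicontinuity of the relaxed cost needs no convexity, only stable convergence in the common-noise component, since $(q,m)$ depend on it merely measurably.
\item Use convexity in $\alpha$ and linearity of the drift to replace the relaxed control by its barycenter.
\item Use a mimicking argument to return to a control adapted to the original filtration.
\end{enumerate}

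Your closing ``localization'' step also fails. Pasting two controls on an event of $\mathcal{F}^0_T$ destroys progressive measurability. This cannot be repaired, because the conditional essential infimum is in general not attained by any adapted control. Take $d=d^0$, $X=0$, $\mathcal{F}^0$ generated by $W^0$, $L=c|\alpha|^2$, $u(T,x,q,\mu)=|x-q|^2$, $q=W^0$ and $m\equiv\delta_0$. Comparing a candidate $\alpha^*$ with $\alpha^*+he\mathds{1}_{[0,\delta]}$, for deterministic $h,e$, forces $\espcond{X^{\alpha^*}_T}{\mathcal{F}^0_T}-W^0_T=\frac{c}{\delta}\int_0^\delta\espcond{\alpha^*_s}{\mathcal{F}^0_s}ds$ for every $\delta\in(0,T]$. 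So this variable is $\mathcal{F}^0_{0+}$-measurable, hence deterministic by Blumenthal's law, whereas $\delta=T$ shows it equals $-\frac{c}{c+T}W^0_T$. The lemma therefore has to be read with the unconditional cost, as in the standard common-noise formulation. That is exactly what your minimization and your first-order computation treat, so simply drop the localization claim. The first-order computation itself is the paper's argument: perturb the optimizer, differentiate, and dualize with the adjoint BSDE via It\^o's formula. One point needs tightening: continuity of $\nabla_\alpha L$ gives no domination. Handle the $\alpha$-part through the monotonicity of difference quotients of the convex map $\alpha\mapsto L$; the $x$-part is dominated thanks to the Lipschitz bounds on $\nabla_x L$ and $\nabla_x u$.
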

\begin{proof}
The existence of an optimal control can be proven either by working first with relaxed controls as in \cite{Djete} and then showing that this implies the existence of strong optimal controls \cite{Lacker-mimicking}, or by constructing directly an optimal feedback control as in \cite{probabilistic-mfg} (see Theorem 1.57). As for the necessary optimality conditions, those are just a consequence of the fact that any optimal control $\alpha^*$ must be a minimizer of
\[J(\alpha)=\espcond{\int_0^T L(X^\alpha_s,q_s,\alpha_s,m_s)+u(T,X^\alpha_T,q_T,\pi_d m_T)}{F^0_T}.\]
 Fixing $\alpha'\in \mathcal{H}^T_\mathcal{F}$, for any $\lambda\in [0,1]$ 
\[J(\lambda \alpha'+(1-\lambda)\alpha^*)-J(\alpha^*)\geq 0.\]
Dividing by $\lambda>0$ and taking the limit as $\lambda\to 0$ yields the necessary optimality conditions, where we have used the fact that since 
\[\esp{\left|\nabla_x u(T,X^{\alpha^*}_T,q_T,\pi_d m_T)\right|^2+\int_0^T \left|\nabla_x L(X^{\alpha^*}_s,q_s,\alpha^*_s,m_s)\right|^2}<+\infty,\]
there exists a unique solution to the backward SDE \eqref{eq: bsde optimal minor mfgmp}. 
\end{proof}
\begin{remarque}
If there exists a solution to the associated mean field game, that is a control $(\alpha^*_t)_{t\in [0,T]}$ optimal given the flow of measures
\[\forall t\in [0,T], \quad m_t=\mathcal{L}(X^{\alpha^*}_t,\alpha^*_t),\]
then the following holds 
\[\forall t\in [0,T],\quad U^{\alpha^*}_t=\nabla_\alpha L(X^{\alpha^*}_t,q_t,\alpha^*_t,\mathcal{L}(X^{\alpha^*}_t,\alpha^*_t)).\]
In particular using the invertibility of the Hilbertian lift of $\nabla_\alpha L$, this implies that $(X^{\alpha^*}_t,U^{\alpha^*}_t,Z^{\alpha^*}_t)_{t\in [0,T]}$ is a solution to the FBSDE
\begin{equation*}
    \left\{
    \begin{array}{l}
\displaystyle X^{\alpha^*}_t=X-\int_0^t \nabla_\alpha \tilde{L}^{-1}(X^{\alpha^*}_s,q_s,U^{\alpha^*}_s)ds+\sqrt{2\sigma_x}B_t,\\
 \displaystyle U^{\alpha^*}_t=\nabla_x u(T,X^{\alpha^*}_T,q_T,\pi_d m_T)+\int_t^T \nabla_x L(X^{\alpha^*}_s,q_s, \nabla_\alpha \tilde{L}^{-1}(X^{\alpha^*}_s,q_s,U^{\alpha^*}_s),m_s)ds+\int_t^T Z^{\alpha^*}_td(B_t,W^0_t),\\
 m_s=\mathcal{L}(X_s,\nabla_\alpha \tilde{L}^{-1}(X^{\alpha^*}_s,q_s,U^{\alpha^*}_s)).
    \end{array}
    \right.
\end{equation*}
This formulation is only possible thanks to our invertibility assumption on the lift in Hypothesis \ref{hyp: mfg with major optimality minor}, which justifies that this is a natural assumption for MFG of controls. 
\end{remarque}

\begin{hyp}
    \label{hyp: mfg with major optimality major}
For the Major player 
    \begin{enumerate}
        \item[-] its Lagrangian is $c_{L^0}-$ strictly convex in $\alpha^0$.
        \item[-] There exists constants $C^0,C_1,C_2,C_3\geq 0$ $p\geq 0$ such that 
        \begin{gather*}
            \forall (q,q',\mu,\nu,\alpha^0,\alpha^{0'}),\\
            |L(q,\alpha^0,\mu)-L(q',\alpha^{0'},\nu)|\leq C^0(1+|\alpha^0\wedge \alpha^{0'}|^p)\left(|q-q'|+\mathcal{W}_2(\mu,\nu)+|\alpha^0-\alpha^{0'}|\right)\\
            |\nabla_{\alpha^0}L(q,\alpha^0,\mu)-\nabla_{\alpha^0}L(q',\alpha^0,\nu)|\leq C_0\left(|q-q'|+\mathcal{W}_2(\mu,\nu)\right),\\
          -C_1+C_{2}|\alpha^0|^p\leq L(q,\alpha^0,\mu)\leq C_1+C_3|\alpha^0|^p.
        \end{gather*} 
        \item[-]Its terminal cost $\psi(\cdot)$ is Lipschitz.
    \end{enumerate}
\end{hyp}
\begin{remarque}
Let us insist that both Hypothesis \ref{hyp: mfg with major optimality minor} and the current one are not necessary conditions, but rather a convenient setting to highlight the link between Lipschitz solutions and a Nash equilibrium. They also serve as examples of Lagrangians for which the associated Hamiltonians satisfy Hypothesis \ref{hyp: locally lipschitz in z}.
\end{remarque}
We define the Hamiltonian of the major player $H^0$ as follows 
\begin{equation}
    \label{eq: def hamiltonian major}
    \forall (q,z,m)\in \reels^{2m}\times \mathcal{P}_2(\reels^{2d}), \quad H^0(q,z,m)=\sup_\alpha \left(\alpha\cdot z-L^0(q,\alpha,m)\right).\end{equation}
\begin{lemma}
    \label{lemma: optimality of major}
Under Hypothesis \ref{hyp: mfg with major optimality major}, given a Lipschitz function $\phi$, if there exists a strong solution $(q_t,X_t,Z_t)_{t\in [0,T]}$ to the Forward Backward system
\[
\left\{
\begin{array}{l}
  \displaystyle q_t=q_0-\int_0^t D_z H^0(q_s,Z_s,m_s)ds+\sqrt{2\sigma^0}W^0_t\\
   \displaystyle\varphi_t=\psi(q_T,\pi_d m_T)+\int_t^T L^0(q_s,D_zH^0(q_s,Z_s,m_s),m_s)ds-\sqrt{2\sigma^0}\int_t^T Z_sdW^0_s\\
   \displaystyle X_t=X_0+\int_0^t \phi(t,X_t,q_t,m_s)ds+\sqrt{2\sigma}dB_t\\
   \displaystyle m_t=\mathcal{L}(X_t,\phi(t,X_t,q_t,\mathcal{L}(X_s|\mathcal{F}^0_s))|\mathcal{F}^0_t),
\end{array}
\right.
\]
which satisfies 
\[\exists C>0,\quad |Z_t|\leq C \text{ a.e in }[0,T], a.s,\]
then the there exists a unique optimal control for the problem \eqref{eq: optimal control problem of major} in $\mathcal{H}^T_{\mathcal{F}^0}$ which is given by 
\begin{equation}
    \label{eq: optimal feedback major}
    \forall t\in [0,T],\quad \alpha^{0,*}_t=D_z H^0(q_t,Z_t,\mathcal{L}(X_t|\mathcal{F}^0_t)).\end{equation}
\end{lemma}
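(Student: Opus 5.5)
The plan is a verification argument in weak formulation. I use a Girsanov change of measure on the common-noise space, so that the candidate backward process $(\varphi_t,Z_t)$ becomes a comparison tool for any competing control $\alpha^0$. The optimality inequality itself then comes from convexity of $L^0$ in $\alpha^0$.

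\emph{Step 1 (the population flow is a functional of the major path).} Since $\phi$ is Lipschitz, fix any continuous $\mathcal{F}^0$-adapted path $(q_t)_{t\in[0,T]}$. The conditional McKean--Vlasov equation for $X$ then has a unique strong solution, and its conditional law $m_t$ given $\mathcal{F}^0_t$ is a progressively measurable functional $m_t=\Phi_t(q_{\cdot\wedge t})$ of the path of $q$ alone. To see this, note that $B$ is independent of $\mathcal{F}^0$, and that one can solve the equation pathwise in $q$ by a Picard scheme with Lipschitz constants independent of the path. Consequently the cost $J^0(\alpha^0,\phi)$ depends only on the joint law of $(q^{\alpha^0},\alpha^0)$ under $\mathbb{P}^0$, through
\[
\esp{\int_0^T L^0(q_s,\alpha^0_s,\Phi_s(q))ds+\psi(q_T,\pi_d\Phi_T(q))}.
\]

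\emph{Step 2 (Girsanov comparison).} Fix an admissible $\alpha^0\in\mathcal{H}^T_{\mathcal{F}^0}$ and write $\alpha^*_t=D_zH^0(q_t,Z_t,m_t)$ along the given solution. First I would treat $\alpha^0$ bounded, and reduce the general case to it at the end by truncation, monotone convergence and the growth bounds of Hypothesis \ref{hyp: mfg with major optimality major}. Since $|Z_t|\le C$, the growth of $L^0$ gives $\alpha^*$ bounded. Define $\theta_t=(\alpha^*_t-\alpha^0_t)/\sqrt{2\sigma^0}$. Novikov's condition holds, so $\mathbb{Q}=\mathcal{E}(\int\theta\,dW^0)_T\cdot\mathbb{P}$ is well defined, with $\tilde W^0_t=W^0_t-\int_0^t\theta_sds$ a $\mathbb{Q}$-Brownian motion. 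The density is $\mathcal{F}^0_T$-measurable, so $B$ stays an independent Brownian motion under $\mathbb{Q}$, and conditional laws given $\mathcal{F}^0$ are unchanged. Under $\mathbb{Q}$, the process $q$ solves $dq=-\alpha^0dt+\sqrt{2\sigma^0}d\tilde W^0$, and $m_t=\Phi_t(q)$ still holds. Hence, by Step 1 (and Yamada--Watanabe type transfer of laws, assuming $\alpha^0$ is a progressive functional of $W^0$), the $\mathbb{Q}$-law of the controlled system coincides with the $\mathbb{P}$-law of the system driven by $\alpha^0$ in strong form. This identification is the delicate step, since the mean field term depends on the whole path of $q$. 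I would handle it by writing $\alpha^0=a(W^0)$ and solving for $q$ as a functional of $\tilde W^0$, using the Lipschitz dependence in Step 1.

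\emph{Step 3 (convexity).} Rewriting the BSDE for $\varphi$ under $\mathbb{Q}$ and taking expectations, using boundedness of $Z$ for the martingale property, gives
\[
\varphi_0=\mathbb{E}^{\mathbb{Q}}\Big[\psi(q_T,\pi_dm_T)+\int_0^T\big(L^0(q_s,\alpha^*_s,m_s)+Z_s\cdot(\alpha^0_s-\alpha^*_s)\big)ds\Big].
\]
Since $\alpha^*=D_zH^0(q,Z,m)$, Fenchel duality gives $Z=\nabla_{\alpha^0}L^0(q,\alpha^*,m)$. Strict convexity then yields
\[
L^0(q,\alpha^0,m)\ge L^0(q,\alpha^*,m)+Z\cdot(\alpha^0-\alpha^*)+\tfrac{c_{L^0}}{2}|\alpha^0-\alpha^*|^2,
\]
so $J^0(\alpha^0,\phi)\ge\varphi_0+\tfrac{c_{L^0}}{2}\mathbb{E}^{\mathbb{Q}}\int_0^T|\alpha^0-\alpha^*|^2ds$. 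With $\alpha^0=\alpha^*$ (so $\mathbb{Q}=\mathbb{P}$) the same computation gives $J^0(\alpha^*,\phi)=\varphi_0$. Thus $\alpha^*$ is optimal, as stated in \eqref{eq: optimal feedback major}. For uniqueness: any other optimizer makes the quadratic term vanish, so it equals $\alpha^*$ $ds\otimes d\mathbb{Q}$-a.e., and hence $\mathbb{P}$-a.e. by equivalence of the measures.

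The main obstacle I expect is Step 2: justifying that the Girsanov-transformed system really represents the cost of an arbitrary strong control $\alpha^0$, given the path-dependent conditional mean field $m$. The backup plan is to run the comparison pathwise in strong form via a linear BSDE. Concretely, I would consider $\varphi$-type quantities along $q^{\alpha^0}$, and use that $Z$ is only defined along the optimal path, so that the change of measure is unavoidable.
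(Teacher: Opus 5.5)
Your ingredients are the paper's. Its proof records the Legendre identity $L^0(q,D_zH^0,m)=D_zH^0\cdot z-H^0$, the rewriting $\varphi_0=\psi(q_T,\pi_dm_T)-\int_0^TH^0(q_t,Z_t,m_t)\,dt-\int_0^TZ_t\cdot dq_t$, and the strong-convexity remainder. It then delegates the change-of-measure comparison to Theorem 1.57 of \cite{probabilistic-mfg}, and your Step 3 is exactly that computation.

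\textbf{The gap is the identification you flag in Step 2, and as you state it, it is false.} With $\theta=(\alpha^*-\alpha^0)/\sqrt{2\sigma^0}$ and $\alpha^0=a(W^0)$ evaluated on the original space, the control under $\mathbb{Q}$ is still $a(W^0)$. But $W^0=\tilde W^0+\int_0^\cdot\theta_s\,ds$ is not a $\mathbb{Q}$-Brownian motion. So the $\mathbb{Q}$-law of $(\tilde W^0,q,\alpha^0)$ is not the $\mathbb{P}$-law of $(W^0,q^{\alpha^0},\alpha^0)$, and there is nothing for Yamada--Watanabe to transfer.

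For instance, take $\alpha^*\equiv0$ and $\alpha^0_t=\mathrm{sign}(W^0_t)$. Under $\mathbb{Q}$ you have $q-q_0=\sqrt{2\sigma^0}W^0$, so the control is the state feedback $\mathrm{sign}(q_t-q_0)$. In the strong problem, $\mathrm{sign}(W^0_t)\neq\mathrm{sign}(q^{\alpha^0}_t-q_0)$ with positive probability, and the two costs differ in general.

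The correct realization is $\beta=a(\tilde W^0)$. But then $\tilde W^0$ must solve the path-dependent equation $\sqrt{2\sigma^0}\,\tilde W^0_t=\sqrt{2\sigma^0}\,W^0_t-\int_0^t(\alpha^*_s-a(s,\tilde W^0))\,ds$. For a merely progressive $a$, this can fail to have an $\mathcal{F}^0$-adapted solution (Tsirelson-type drifts). The Lipschitz dependence of $m$ on $q$ from Step 1 says nothing about this equation. Your uniqueness sentence inherits the same defect, because the quadratic remainder then bears on $\beta$, not on $\alpha^0$.

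\textbf{One way to close it.} Approximate a bounded $\alpha^0$ in $L^2(ds\otimes d\mathbb{P})$ by uniformly bounded simple controls $\alpha^{0,n}=\sum_i f_i(q_0,W^0_{\cdot\wedge t_i})\mathds{1}_{(t_i,t_{i+1}]}$. For these, the equation for $\tilde W^{0,n}$ can be solved interval by interval. Your Steps 2--3 then hold verbatim with the realized control $\beta^n=\sum_i f_i(q_0,\tilde W^{0,n}_{\cdot\wedge t_i})\mathds{1}_{(t_i,t_{i+1}]}$, giving $J^0(\alpha^{0,n},\phi)\ge\varphi_0+\frac{c_{L^0}}{2}\mathbb{E}^{\mathbb{Q}^n}\int_0^T|\beta^n_s-\alpha^*_s|^2ds$. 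Continuity of the strong cost for bounded controls then yields $J^0(\alpha^0,\phi)\ge\varphi_0$.

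For uniqueness, suppose $\alpha^0$ is optimal. The remainder then tends to zero, so the relative entropy $\mathrm{Ent}(\mathbb{Q}^n|\mathbb{P})=\frac{1}{4\sigma^0}\mathbb{E}^{\mathbb{Q}^n}\int_0^T|\beta^n_s-\alpha^*_s|^2ds$ tends to zero. The $\mathbb{Q}^n$-law of $(q_0,\tilde W^{0,n},\beta^n)$ equals the $\mathbb{P}$-law of $(q_0,W^0,\alpha^{0,n})$. Passing to the limit gives equality of the $\mathbb{P}$-laws of $(q_0,W^0,\alpha^0)$ and $(q_0,W^0,\alpha^*)$. Since both controls are functionals of $(q_0,W^0)$, this forces $\alpha^0=\alpha^*$.

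Your backup plan, a BSDE along $q^{\alpha^0}$ with driver $z\cdot\alpha^0_s-H^0(q^{\alpha^0}_s,z,m^{\alpha^0}_s)$, would put the remainder directly on $\alpha^0$. However, identifying its initial value with $\varphi_0$ requires uniqueness in law for the optimal FBSDE, of which that BSDE becomes a weak solution after Girsanov. The lemma does not assume this uniqueness.
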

\begin{proof}
First let us remark that under Hypothesis \ref{hyp: mfg with major optimality major}, the Hamiltonian of the major player satifies 
\[\exists C>0, \forall (q,z,m)\in \reels^{2m}\times \mathcal{P}_2(\reels^{2d}), \quad |D_zH^0(q,z,m)|\leq C(1+|z|).\]
In particular, letting $(\alpha^{0,*}_t)_{t\in [0,T]}$ be defined as in \eqref{eq: optimal feedback major}, it follows from the boundedness of $(Z_t)_{t\in [0,T]}$ that $(\alpha^{0,*}_t)_{t\in [0,T]}\in L^\infty(\Omega,L^\infty([0,T],\reels^{d^0}))$. Moreover by the definition of $H^0$ as the Legendre transform of $L^0$ and the convexity of $L^0$
\[\forall (q,z,m), \quad L^0(q,D_z H^0(q,z,m),m)=D_zH^0(q,z,m)\cdot z-H^0(q,z,m).\]
Consequently 
\[J(\alpha^*)=\varphi_0=\psi(q_T,\pi_d m_T)-\int_0^T H^0(q_t,Z_t,m_t)dt-\sqrt{2\sigma^0}\int_0^T Z_sd\left(W^0_t-\int_0^t D_zH^0(q_s,Z_s,m_s)ds\right) \quad a.s.\]
By the strong convexity of $L^0$ it follows that 
\begin{equation}
    \label{eq: non optimal hamiltonian major}
    \alpha \mapsto L^0(q,\alpha,m)-\alpha\cdot z,
\end{equation}
is also strongly convex, hence 
\[\forall (\alpha,\beta,q,z,m), \quad L^0(q,\beta,m)-\beta\cdot z\geq L(q,\alpha,m)-\alpha \cdot z+(\nabla_\alpha L^0(q,\alpha,m)-z)\cdot (\beta-\alpha)+\frac{1}{2}c_{L^0}|\alpha-\beta|^2.\]
In particular evaluating this expression for $\alpha$ the minimizer of \eqref{eq: def hamiltonian major} yields 
\[\forall (\alpha,\beta,q,z,m), \quad L^0(q,\beta,m)-\beta\cdot z\geq -H(q,z,m)+\frac{1}{2}c_{L^0}|D_zH(q,z,m)-\beta|^2.\]
In light of all those elements, the proof is then a simple adaptation of Theorem 1.57 of \cite{probabilistic-mfg}.
\end{proof}
We now have all the tools at hands to state the main results on the link between Nash equilibrium and Lipschitz solutions. We consider the following version of \eqref{mfg with major}

\begin{equation}
\label{eq: true formulation mfgmp}
\left\{
\begin{array}{l}
     \displaystyle X_t=X_0-\int_0^t  \alpha^*_t ds+\sqrt{2\sigma}B_t, \\
     \displaystyle q_t=q_0-\int_0^t \alpha^{0,*}_tds+\sqrt{2\sigma^0}W^0_t,\\
    \displaystyle  U_t= U_T+\int_t^T \nabla_x L(X_s,q_s,\alpha_s,\mathcal{L}(X_s,\alpha^*_s|\mathcal{F}^0_s))ds-\int_t^T
    Z_sd(B,W^0)_s,\\
\displaystyle \varphi_t= \varphi_T+\int_t^T L^0(q_s,\alpha^0_s,\mathcal{L}(X_s,\alpha^*_s|\mathcal{F}^0_s))ds-\sqrt{2\sigma^0}\int_t^T Z^\varphi_sdW^0_s,\\
U_T=\nabla_x u(T,X_T,q_T,\mathcal{L}(X_T|\mathcal{F}^0_T)), \quad \varphi_T=\psi(q_T,\mathcal{L}(X_T|\mathcal{F}^0_T)),\\
\forall t\in [0,T], \left\{\begin{array}{l}\alpha^*_t=\nabla_\alpha \tilde{L}^{-1}(X_s,q_s,U_s),\\
\alpha^{0,*}_t=D_p H^0(q_s,Z^\varphi_s,\mathcal{L}(X_s,\alpha^*_s|\mathcal{F}^0_s)),\\
\mathcal{F}^0_t=\sigma\left(q_0,(W^0_s)_{s\leq t}\right),
\end{array}\right.
\end{array}
\right.
\end{equation}
Formally, a Lipschitz solution $(U,\varphi)$ to \eqref{eq: true formulation mfgmp} is a solution of the master system defined through the characteristics. If there exists a smooth solution to this system, then it is already known \cite{mfgmp-cardaliaguet} that the optimal controls are given in feedback form as functions of $U,\nabla_p\varphi$. We now prove that even when $\varphi$ is only assumed to be Lipschitz, the control of minor players can still be written in feedback form 
\begin{thm}
    Under Hypotheses \ref{hyp: mfg with major optimality minor} and \ref{hyp: mfg with major optimality major}, for any admissible initial condition $(X_0,q_0)$
\begin{enumerate}
    \item[-] There exists $T_c>0$ such that \eqref{eq: true formulation mfgmp} admits a Lipschitz solution on $[0,T_c)$.
    \item[-] If there exists a Lipschitz solution $(U,\varphi)$ to \eqref{eq: true formulation mfgmp} on $[0,T]$, then letting
\[\phi^U: (t,x,q,\mu)\mapsto L_{inv}(x,q,U(T-t,x,q,\mu),\left(id_{\reels^d},U(T-t,\cdot,q,\mu)\right)_\#\mu),\]
the pair $(\phi^U,(\alpha^0_t)_{t\in [0,T]})$ is a Nash equilibrium in the sense of Definition \ref{def: nash equilibrium} for the major-minor problem. Moreover, it is the unique Nash equilibrium given by a Lipschitz markovian feedback $\phi$ for minor players, in the sense that if there exists another Nash equilibrium $(\phi',\alpha^{0,'})$ for a Lipschitz function $\phi'$ then 
\[\forall t\in [0,T], \quad (\alpha^0_t,\alpha(\alpha^0,\phi^U)_t)=(\alpha^{0,'}_t,\alpha(\alpha^{0,'},\phi')_t) \quad a.s.\]
\end{enumerate}
\end{thm}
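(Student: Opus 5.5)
For the first item, I would cast \eqref{eq: true formulation mfgmp} as an instance of \eqref{mfg with major} and invoke Theorem \ref{thm: existence mfgmp}. Concretely, I would set
\[F(x,q,u,\varphi,z,\mu)=L_{inv}(x,q,u,\mu),\qquad G(x,q,u,\varphi,z,\mu)=\nabla_x L\bigl(x,q,L_{inv}(x,q,u,\mu),\nu\bigr),\]
where $\nu$ is the joint law of states and controls. Through the Hilbertian lift, $\nu=\mathcal{L}(X,\nabla_\alpha\tilde L^{-1}(X,q,U))$, and this is a Lipschitz function of $\mathcal{L}(X,U)$ by Hypothesis \ref{hyp: mfg with major optimality minor}. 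For the major player I would take $H_z=D_zH^0$ and $L=H^0-D_zH^0\cdot z$, evaluated at the same $\nu$. The data are $g=\nabla_x u(T,\cdot)$ and $\psi$.

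It then remains to check Hypothesis \ref{hyp: locally lipschitz in z}. The minor-player coefficients are globally Lipschitz, since they are compositions of Lipschitz maps. For the major player, strict convexity of $L^0$ together with the Lipschitz bound on $\nabla_{\alpha^0}L^0$ makes $D_zH^0$ Lipschitz in $(q,z,\mu)$, with $|D_zH^0|\le C(1+|z|)$. The growth condition with exponent $p$ then gives $L^0(q,D_zH^0,\mu)$ a local Lipschitz constant of the form $C_{coef}+\omega(|z|)$. Once this is in place, Theorem \ref{thm: existence mfgmp} gives $T_c>0$.

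For the Nash property, take the Lipschitz solution $(U,\varphi)$ on $[0,T]$ and its unique strong solution (Theorem \ref{lemma: uniqueness lip sol}). The steps are as follows.

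\textbf{Step 1: the feedback is admissible.} By the representation $U_t=U(T-t,X_t,q_t,\mathcal{L}(X_t|\mathcal{F}^0_t))$, the minor control is $\alpha^*_t=\phi^U(t,X_t,q_t,\mathcal{L}(X_t|\mathcal{F}^0_t))$. Since $U$ and $L_{inv}$ are Lipschitz, and $\mu\mapsto(id,U(\cdot,q,\mu))_\#\mu$ is Lipschitz in $\mathcal{W}_2$, the function $\phi^U$ is Lipschitz and hence admissible. The major control $\alpha^{0}_t=D_zH^0(q_t,Z^\varphi_t,\cdot)$ is bounded by Lemma \ref{lemma: Z bound by lip}, so it lies in $\mathcal{H}^T_{\mathcal{F}^0}$.

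\textbf{Step 2: minor optimality.} Freeze $(q_t,m_t)$ at the equilibrium flow. The FBSDE for $(X,U)$ is then exactly the Pontryagin system of Lemma \ref{lemma: optimality of minor}. Convexity of $L$ in $\alpha$ and the Lipschitz gradients make this condition sufficient as well as necessary, by the standard convexity argument for stochastic Pontryagin with a convex terminal cost. I would need to check that convexity in $x$ is available here, or otherwise argue via uniqueness of the optimizer, whose existence is given by Lemma \ref{lemma: optimality of minor}.

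\textbf{Step 3: major optimality.} Fix $\phi^U$. The system solved by $(q,\varphi,Z^\varphi,X)$ is exactly the one in Lemma \ref{lemma: optimality of major}, with bounded $Z$. That lemma yields optimality of $\alpha^0$ against $\phi^U$.

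\textbf{Main obstacle: the clause "for any admissible $\alpha^0$".} The major player's deviation changes $q$ and, through $\phi^U$, the minor flow. Lemma \ref{lemma: optimality of major} treats exactly this case, with the law determined by $\phi$ along the deviated $q^{\alpha^0}$, so Step 3 is consistent. I would still verify carefully that the cost functional $J^0$ is evaluated with $m^\phi$ recomputed along the deviated $q$.

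For uniqueness, suppose $(\phi',\alpha^{0,'})$ is another Nash equilibrium with $\phi'$ Lipschitz. By Lemma \ref{lemma: optimality of minor}, the minor control satisfies $U'_t=\nabla_\alpha L(\cdot)$, so $\alpha'=\nabla_\alpha\tilde L^{-1}(X',q',U')$ and $(X',U')$ solves the minor part of \eqref{eq: true formulation mfgmp}.

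For the major player, $\alpha^{0,'}$ is optimal against the Lipschitz feedback $\phi'$. I would introduce the value BSDE of the major player and, by the strict-convexity inequality in the proof of Lemma \ref{lemma: optimality of major}, identify $\alpha^{0,'}_t=D_zH^0(q'_t,Z'_t,m'_t)$. This is the step I expect to be delicate: it requires a comparison argument, namely that $\alpha^{0,'}$ achieves the infimum of $L^0-\alpha\cdot Z'$ $dt\otimes d\mathbb{P}$-a.e. I would also need square integrability of $Z'$, which is enough by the Lipschitz-coefficient version of Theorem \ref{lemma: uniqueness lip sol}.

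With both identifications, the tuple is a strong solution of \eqref{eq: true formulation mfgmp} from the same initial condition, and Theorem \ref{lemma: uniqueness lip sol} forces it to coincide with the Lipschitz-solution trajectory. This gives equality of $(\alpha^0,\alpha)$ almost surely.
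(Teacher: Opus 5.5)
\textbf{Overall.} Your first item and the verification that $(\phi^U,\alpha^0)$ is a Nash equilibrium follow the paper. The uniqueness part, however, has a genuine gap, exactly at the step you flagged as delicate.

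\textbf{The gap in the uniqueness part.} To identify $\alpha^{0,'}$, you need a strong solution $(q',X',\varphi',Z')$ of the system in Lemma~\ref{lemma: optimality of major} with $\phi=\phi'$, on all of $[0,T]$, with $Z'$ bounded. That lemma characterizes the optimal control, and gives its uniqueness, only under this hypothesis.

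This object is not a plain value BSDE. When the major player deviates, the minor flow $m^{\phi'}$ is recomputed along the deviated $q$. The system therefore fully couples three pieces: the forward $q'$ with drift $D_zH^0(q',Z',m')$, the conditional law of $X'$, and the backward $\varphi'$. Theorem~\ref{thm: existence mfgmp} produces such a solution only on a short interval, unless you have an a priori Lipschitz bound, and you do not supply one.

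Your fallback is also unavailable. You claim square integrability of $Z'$ suffices via the Lipschitz-coefficient version of Theorem~\ref{lemma: uniqueness lip sol}. But the driver $L^0(q,D_zH^0(q,z,m),m)$ is only locally Lipschitz in $z$; it is quadratic when $L^0$ is quadratic in $\alpha^0$, which is why Hypothesis~\ref{hyp: locally lipschitz in z} carries the modulus $\omega$. So both Lemma~\ref{lemma: optimality of major} and your final appeal to Theorem~\ref{lemma: uniqueness lip sol} require $Z'\in L^\infty$, which you never obtain.

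\textbf{The missing idea.} The paper supplies a Lipschitz estimate on the major player's value function against $\phi'$, namely $\varphi'(t,q,\mu)=\inf_\alpha J(t,q,\mu,\alpha)$, with the minor flow driven by $\phi'$ along $q^{\alpha}$. The argument runs as follows.

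First, the coercivity $L^0\ge -C_1+C_2|\alpha^0|^p$, compared with the cost of the zero control, lets you restrict the infimum to a set $K_\alpha$ of controls whose $p$-th moment is bounded uniformly in $(t,q,\mu)$.

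Second, on $K_\alpha$ you combine the local Lipschitz bound $C^0(1+|\alpha^0|^p)$ on $L^0$ with Gronwall estimates for the flows driven by the Lipschitz feedback $\phi'$. This makes $J$ Lipschitz in $(q,\mu)$ uniformly in $\alpha\in K_\alpha$ and in $t$, so $\varphi'$ is Lipschitz uniformly on $[0,T]$.

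Third, by the verification in Lemma~\ref{lemma: optimality of major}, the Lipschitz solution of this system coincides with $\varphi'$ on its interval of existence. The blow-up alternative of Theorem~\ref{thm: existence mfgmp} then gives a Lipschitz solution on all of $[0,T]$.

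Finally, Lemma~\ref{lemma: Z bound by lip} gives $|Z'|\le\|\nabla_q\varphi'\|_\infty$. Uniqueness in Lemma~\ref{lemma: optimality of major} then yields $\alpha^{0,'}=D_zH^0(q',Z',m')$ with bounded $Z'$, and your last step goes through.

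\textbf{A smaller point on your Step~2.} Sufficiency of the maximum principle is not available here. Hypothesis~\ref{hyp: mfg with major optimality minor} gives convexity in $\alpha$ only, with no convexity in $x$ of $L$ or of $u(T,\cdot)$. Commit to your fallback, stated precisely as follows. An optimizer exists by Lemma~\ref{lemma: optimality of minor}, and every optimizer solves the Pontryagin FBSDE with frozen $(q_t,m_t)$. That FBSDE has a unique solution because $x\mapsto U(T-t,x,q_t,\mathcal{L}(X_t|\mathcal{F}^0_t))$ is a Lipschitz decoupling field for it, via \eqref{eq: caracteristics in x fbsde with major}, so Theorem~\ref{lemma: uniqueness lip sol} applies. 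What you use is uniqueness for this FBSDE; uniqueness of the optimizer is the conclusion.
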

\begin{proof}

    \noindent\textit{Step 1: $(\phi,\alpha^{0,*})$ is a Nash equilibrium}\\
    Under Hypotheses \eqref{hyp: mfg with major optimality minor} and \eqref{hyp: mfg with major optimality major}, 
    by the strong convexity in $\alpha^0$ of $L^0$,
    Hypothesis \eqref{hyp: locally lipschitz in z} holds for the coefficients $(\psi,\nabla_x u(T,\cdot,),\nabla_\alpha \tilde{L}^{-1},\nabla_x L\circ \nabla_\alpha \tilde{L}^{-1},D_pH^0,L^0\circ D_p H^0)$ and the first claim follows from Theorem \ref{thm: existence mfgmp}.
    
    We now fix $T>0$ and assume that there exists a Lipschitz solution to \linebreak MFGMP$(\psi,\nabla_x u(T,\cdot,),\nabla_\alpha \tilde{L}^{-1},\nabla_x L\circ \nabla_\alpha \tilde{L}^{-1},D_pH^0,L^0\circ D_p H^0)$ on $[0,T]$. By Lemma \ref{lemma: optimality of major}, the control $(\alpha^{0,*}_t)_{t\in [0,T]}$ as defined in \eqref{eq: true formulation mfgmp} is optimal for the Lipschitz feedback $\phi^U$. Moreover, letting 
    \[\forall t\in [0,T], \quad m_t=\mathcal{L}(X_t,\alpha^*_t|\mathcal{F}^0_t),\]
    it follows from Lemma \ref{lemma: optimality of minor}, that given $(q^{\alpha^{0,*}}_t,m_t)_{t\in [0,T]}$ there exists at least one optimal control to the problem \eqref{eq: problem of minor} and any optimal control must satisfy
    \[
    \left\{
        \begin{array}{l}
     \displaystyle X^\alpha_t=X_0-\int_0^t  \alpha_t ds+\sqrt{2\sigma}dB_s, \\
    \displaystyle  U^\alpha_t= \nabla_x u(T,X_T,q_T,\pi_d m_T)+\int_t^T \nabla_x L(X_s,q_s,\alpha_s,m_s)ds-\int_t^T
    Z_sd(B,W^0)_s,\\
    \forall s\in [0,T],\quad \alpha_s=D_p H(X^\alpha_s,q_s,U^\alpha_s,m_s)
        \end{array}
    \right.
    \]
    where $H$ is the Hamiltonian of minor players defined by 
    \[H(x,q,p,m)=\sup_\alpha \left(\alpha\cdot p-L(x,q,\alpha,m)\right).\]
    We now remind that for any random variable $(X,\alpha)\in \mathcal{H}^{2d}$, $L_{inv}$ admits the following representation \cite{monotone-sol-meynard}
    \[L_{inv}(x,q,a,\mathcal{L}(X,\alpha))=D_p H(x,q,a,\mathcal{L}(X,\nabla_\alpha \tilde{L}^{-1}(X,q,\alpha))).\]
    In particular 
    \[\forall (t,x,a)\in [0,T]\times \reels^{d+d^0+d}, \quad D_pH(x,q_t,a,m_t)=L_{inv}(x,q,a,\mathcal{L}(X_t,\alpha_t|\mathcal{F}^0_t)), \quad a.s.\]
    In light of this representation, the necessary conditions for the optimality condition of a control for minor players become
    \[
    \left\{
        \begin{array}{l}
     \displaystyle X^\alpha_t=X_0-\int_0^t  \alpha_t ds+\sqrt{2\sigma}B_t, \\
    \displaystyle  U^\alpha_t= \nabla_x u(T,X_T,q_T,\pi_d m_T)+\int_t^T \nabla_x L(X_s,q_s,\alpha_s,m_s)ds-\int_t^T
    Z^\alpha_sd(B,W^0)_s,\\
    \forall s\in [0,T],\quad \alpha_s=L_{inv}(X^\alpha_s,q_s,U^\alpha_s,\mathcal{L}(X_t,\alpha^*_t|\mathcal{F}^0_t))
        \end{array}
    \right.
    \]
    By Theorem \ref{lemma: uniqueness lip sol}, there exists a unique solution to this forward backward system which is given by $(X_t,U_t,Z_t)_{t\in [0,T]}$. This ends to show that $(\alpha^*_t)_{t\in [0,T]}$ is the unique optimal control for the minor problem given $(\alpha^{0,*}_t,m_t)_{t\in [0,T]}$ and consequently that the couple $\phi^U,(\alpha^{*,0}_t)_{t\in [0,T]}$ is a Nash equilibrium for the minor-major problem. \\

    \noindent\textit{Step 2: uniqueness of Nash equilibrium given by a Lipschitz feedback}\\
    Let us assume that there exists another Nash equilibrium $(\phi',(\alpha^{0,'}_t)_{t\in [0,T]})$, with a Lipschitz feedback $\phi'$. For any admissible control $\alpha$, and any pair $q_0,\mu\in \reels^{d^0}\times \mathcal{P}_2(\reels^d),$ we define 
    \[J(t,q,\mu,\alpha)=\esp{\psi(q^{\alpha,q_0}_T,\pi_dm^{\mu,q_0}_T)+\int^T_{T-t}L^0(q^{\alpha,q_0}_s,\alpha_s,m^{\mu,q_0}_s)ds},\] 
    where for $u\in [T-t,T]$
        \[
    \left\{
        \begin{array}{l}
     \displaystyle q^{\alpha,q^0}_{u}=q_0-\int_{T-t}^{u}  \alpha_s ds+\sqrt{2\sigma}\int_{T-t}^{u}dW^0_s, \\
\displaystyle X^{\mu,q_0}_{u}=X^\mu_0-\int_{T-t}^{u} \phi'(s,X^{\mu,q_0}_s,q^{\alpha^0}_s,m^{\mu,q_0}_s)ds+\sqrt{2\sigma}\int_{T-t}^{u}dB_s,\\
m^{\mu,q_0}_u=\mathcal{L}(X^{\mu,q_0}_u,\phi(u,X^{\mu,q_0}_u,q^{\alpha,q_0}_u,\mathcal{L}(X^{\mu,q_0}_u|\mathcal{F}^0_u))|\mathcal{F}^0_u),
        \end{array}
    \right.
    \]
    for a random variable $X^\mu_0$ independent of $(B_s-B_{T-t},W^0_s-W^0_{T-t})_{s\geq T-t}$ distributed along $\mu$. By standard Gronwall type estimates, there exists a constant $C_{Lip}>0$ such that for any admissible control $\alpha$ 
    \begin{gather*}\forall (q_0,q_0',\mu,\nu)\in \reels^{2m}\times \mathcal{P}_2(\reels^{2d}), \forall u\in [T-t,T]\\
         |q^{\alpha,q_0}_u-q^{\alpha,q_0'}_u|^2+\espcond{|X^{\mu,q_0}_u-X^{\nu,q_0'}_u|^2}{\mathcal{F}^0_u}\leq C_{Lip}(|q_0-q_0'|^2+\mathcal{W}^2_2(\mu,\nu)) \quad a.s. \end{gather*}
    We now remark that independently of the chosen initial condition $(q,\mu)$ 
    \[\inf_\alpha J(t,q,\mu,\alpha)\leq J(t,q,\mu,0)\leq C_1,\]
    consequently for a given $(q,\mu)$ the infimum is reached on the set 
    \[K_\alpha=\left\{\alpha\in \mathcal{H}^T_{\mathcal{F}^0}, \quad \esp{\int_{0}^T|\alpha_s|^p ds}\leq 2C_1\right\},\]
    by Hypothesis \ref{hyp: mfg with major optimality major}. However fixing $(q,q',\mu,\nu)$ and $\alpha\in K_\alpha$, 
    \begin{align*} 
        |J(t,q,\mu,\alpha)-J(t,q',\nu,\alpha)|&\leq \|\psi\|_{Lip}C_{Lip}\left(|q-q'|+\mathcal{W}_2(\mu,\nu)\right)\\
        &+\esp{\int_{T-t}^t\left|L^0(q^{\alpha,q_0}_s,\alpha_s,m^{\mu,q}_s)-L^0(q^{\alpha,q'}_s,\alpha_s,m^{\nu,q'_0}_s)\right|ds },
    \end{align*}
    and 
    \begin{gather*}
        \esp{\int_{T-t}^T\left|L^0(q^{\alpha,q_0}_s,\alpha_s,m^{\mu,q}_s)-L^0(q^{\alpha,q'}_s,\alpha_s,m^{\nu,q'_0}_s)\right|ds }\\
        \leq  \esp{\int_{T-t}^T\espcond{C^0(1+|\alpha_s|^p)(|q^{\alpha,q}_s-q^{\alpha,q'}_s|+\mathcal{W}_2(m^{\mu,q}_s,m^{\nu,q'}_s))|}{\mathcal{F}^0_s}ds }\\
        \leq 2(1+C^0)C_1)C_{Lip}(1+\|\phi'\|_{Lip})(T-t)\left(|q-q'|+\mathcal{W}_2(\mu,\nu)\right)
    \end{gather*}
    Letting 
    \[\varphi'(t,q,\mu)=\inf_\alpha J(t,q,\mu,\alpha)=\inf_{\alpha \in K_\alpha}J(t,q,\mu,\alpha),\]
    we deduce that $\varphi':[0,T]\times \reels^{d^0}\times \mathcal{P}_2(\reels^d)\to \reels$ is a Lispchitz function in $(q,\mu)$ uniformly on $[0,T]$. In particular, it follows naturally from Lemma \eqref{lemma: optimality of major} and Theorem \ref{thm: existence mfgmp}, that there exists a Lipschitz solution on $[0,T]$ to the forward backward system
\[
\left\{
\begin{array}{l}
  \displaystyle q'_t=q_0-\int_{0}^t D_z H^0(q'_s,Z^{0,'}_s,m'_s)ds+\sqrt{2\sigma^0}W^0_t\\
   \displaystyle\varphi'_t=\psi(q'_T,\pi_d m'_T)+\int_t^T L^0(q'_s,D_zH^0(q'_s,Z^{0,'}_s,m'_s),m'_s)ds-\sqrt{2\sigma^0}\int_t^T Z^{0,'}_sdW^0_s\\
   \displaystyle X'_t=X_0-\int_{0}^t \phi'(s,X'_s,q'_s,\mathcal{L}(X'_s|\mathcal{F}^0_s)),m'_s)ds+\sqrt{2\sigma}B_t\\
   \displaystyle m'_t=\mathcal{L}(X'_t,\phi'(t,X'_t,q'_t,\mathcal{L}(X'_t|\mathcal{F}^0_t))|\mathcal{F}^0_t),
\end{array}
\right.
\]
which is exactly given by $\varphi'$. In particular
\[\forall t\in [0,T],\quad  \alpha^{0,'}_t=D_zH^0(q'_s,Z^{0,'}_s,\mathcal{L}(X'_s|\mathcal{F}^0_s))|^2 \quad a.s.\]
Naturally, it follows that 
\[\forall t\in [0,T], \quad (X'_t,m'_t)=(X^{\phi'}_t,m^{\phi'}_t) \quad a.s. \]
and that for minor players  
\[\forall t\in [0,T], \quad \alpha^*(\alpha^{0,*})_t=\phi'(t,X'_t,q'_t,\mathcal{L}(X'_s|\mathcal{F}^0_s)) \quad a.s.\]
Using the necessary optimality condition for minor players on $[0,T]$ we deduce that the following must hold 
 \[
\left\{
\begin{array}{l}
  \displaystyle q'_t=q_0-\int_{0}^t D_z H^0(q'_s,Z^{0,'}_s,m'_s)ds+\sqrt{2\sigma^0}W^0_t\\
   \displaystyle\varphi'_t=\psi(q'_T,\pi_d m'_T)+\int_t^T L^0(q'_s,D_zH^0(q'_s,Z^{0,'}_s,m'_s),m'_s)ds-\sqrt{2\sigma^0}\int_t^T Z^{0,'}_sdW^0_s\\
   \displaystyle X'_t=X_0-\int_{0}^t \phi'(s,X'_s,q'_s,\mathcal{L}(X'_s|\mathcal{F}^0_s),m'_s)ds+\sqrt{2\sigma}B_t\\
    \displaystyle  U'_t= U_T+\int_t^T \nabla_x L(X'_s,q'_s,\phi'(s,X'_s,q'_s,\mathcal{L}(X'_s|\mathcal{F}^0_s)),\mathcal{L}(X'_s,\alpha_s|\mathcal{F}^0_s))ds-\int_t^T
    Z'_sd(B,W^0)_s,\\
   \displaystyle m'_t=\mathcal{L}(X'_t,\phi'(t,X'_t,q'_t,\mathcal{L}(X'_t|\mathcal{F}^0_t))|\mathcal{F}^0_t),\\
   \phi'(s,X'_s,q'_s,\mathcal{L}(X'_s|\mathcal{F}^0_s))=\nabla_\alpha \tilde{L}^{-1}(X'_s,q'_s,U'_s),\\
   U_T=\nabla_x u(T,X'_T,q'_T,\mathcal{L}(X_T|\mathcal{F}^0_T)).
\end{array}
\right.
\]
This means that $(X'_t,q'_t,\varphi'_t,U'_t,Z'_t,Z^{0,'}_t)_{t\in [0,T]}$ is a solution of the FBSDE \eqref{eq: true formulation mfgmp}. By Theorem \eqref{lemma: uniqueness lip sol} this implies that $(\phi,\alpha^0)$ and $(\phi',\alpha^{0,'})$ yield the same controls. 
\end{proof}
\begin{remarque}
Our uniqueness result may appear quite weak. Although we prove that two Lipschitz feedbacks $\phi,\phi'$ must yield the same controls for minor players we do not show that they are equal. This is a direct consequence of our definition of a Nash equilibrium, and in general we cannot expect a better result. Indeed if $(\phi,\alpha^0)$ is a Nash equilibrium, then any Lipschitz function $\phi'$ such that
\[\phi(t,x,q^{\alpha^0}_t,m^*_t(\alpha^0,\phi))=\phi'(t,x,q^{\alpha^0}_t,m^*_t(\alpha^0,\phi)) \quad a.e \text{ in } [0,T]\times \reels^d, \quad a.s\]
yield the same Nash equilibrium. Let us formally explain how to recover the uniqueness of a Lipschitz feedback whenever there exists a Lipschitz solution. Instead of fixing an initial condition $(X_0,q_0)$, we may define a pair of function $(\phi,\Phi^0)$ as a Nash equilibrium if $\Phi^0$ is measurable, $\phi$ is Lipschitz, and for any admissible initial condition $(X_0,q_0)$ the pair
\begin{gather*}
    (\alpha^0)_{t\in [0,T]}=\Phi^0(\mathcal{L}(X_0),q_0,W^0),\\
    (\alpha_t)_{t\in [0,T]}=(\alpha^*(\alpha^0)_t)_{t\in [0,T]},
\end{gather*}
is an optimal feedback for the major-minor system with initial condition $(X_0,q_0)$. Given the existence of a Lipschitz solution to \eqref{eq: true formulation mfgmp}, a variant of Theorem 1.33 of \cite{probabilistic-mfg} ensures the existence of such a mapping. The uniqueness of a pair $(\phi,\Phi^0)$ then follows from the fact that the argument we presented in the above proof now holds along any initial condition. 
\end{remarque}
\subsubsection{The case of additive common noise}
\label{subsection: additive common noise}
Suppose now that there is an additional additive common noise $(B^0_t)_{t\geq 0}$ a $\mathcal{F}^0-$Brownian motion. Given a correlation matrix 
\[\forall t>0, \quad \Gamma_{cor}=\text{Cor}(B^0_t,W^0_t)=Cov(B^0_1,W^0_1),\]
we may always assume that there exist $C_{cor}\in \mathcal{M}_d(\reels)$ such that 
\[C_{cor} C_{cor}^T=I_d-\Gamma_{cor}\Gamma_{cor}^T,\]
and a Brownian motion $(\tilde{B}^0_t)_{t\geq 0}$ independent of $(W^0_t)_{t\geq 0}$ such that 
\[\forall t\geq 0, \quad B^0_t=\Gamma_{cov}W^0_t+C_{cor}\tilde{B}^0_t.\]
This follows from the fact that 
\[I_d-\Gamma_{cor}\Gamma_{cor}^T,\]
is positive semi-definite and Cholesky decomposition. Let $v_0\geq 0$, we consider systems of the form 
 \begin{equation}
\label{eq: system common noise}
\left\{
\begin{array}{l}
     \displaystyle X_t=X_0-\int_0^t  F(X_s,q_s,U_s,\varphi_s,Z^\varphi_s,\mathcal{L}(X_s,U_s|\mathcal{F}^0_s))ds+\sqrt{2\sigma}B_t+\sqrt{2v^0}B^0_t, \\
     \displaystyle q_t=q_0-\int_0^t H_z(q_s,\varphi_s,Z^\varphi_s,\mathcal{L}(X_s,U_s|\mathcal{F}^0_s))ds+\sqrt{2\sigma^0}W^0_t,\\
    \displaystyle  U_t= U_T+\int_t^T G(X_s,q_s,U_s,\varphi_s,Z^\varphi_s,\mathcal{L}(X_s,U_s|\mathcal{F}^0_s))ds-\int_t^T
    Z_sd(B,W^0,B^0)_s,\\
\displaystyle \varphi_t= \varphi_T+\int_t^T L_H(q_s,\varphi_s,Z^\varphi_s,\mathcal{L}(X_s,U_s|\mathcal{F}^0_s))ds-\int_t^T Z^\varphi_sd(W^0,B^0)_s,\\
U_T=g(X_T,q_T,\mathcal{L}(X_T|\mathcal{F}^0_T)), \quad \varphi_T=\psi(q_T,\mathcal{L}(X_T|\mathcal{F}^0_T)).
\end{array}
\right.
\end{equation}
This is a more general version of MFGs with a major player and additive common noise. Introducing the change of variable 
\[q^0_t=\sqrt{2v^0}W^0_t, \quad \tilde{X}_t=W_t-q^0_t,\]
we consider the system
 \begin{equation}
\label{system with additive common noise}
\left\{
\begin{array}{l}
     \displaystyle \tilde{X}_t=X_0-\int_0^t  F'(\tilde{X}_s,q^0_s,q_s,U_s,\varphi_s,\tilde{Z}^\varphi_s,\mathcal{L}(\tilde{X}_s,U_s|\mathcal{F}^0_s))ds+\sqrt{2\sigma}B_t, \\
     \displaystyle q_t=q^0_0-\int_0^t H'_z(q_s,q^0_s,\varphi_s,\tilde{Z}^\varphi_s,\mathcal{L}(\tilde{X}_s,U_s|\mathcal{F}^0_s))ds+\sqrt{2\sigma^0}W^0_t,\\
     \displaystyle q^0_t=\sqrt{2v^0}\left(\Gamma_{cor} W^0_t+C_{cor}\tilde{B}^0_t\right)\\
    \displaystyle  U_t= U_T+\int_t^T G'(\tilde{X}_s,q^0_s,q_s,U_s,\varphi_s,\tilde{Z}^\varphi_s,\mathcal{L}(\tilde{X}_s,U_s|\mathcal{F}^0_s))ds-\int_t^T
    \tilde{Z}_sd(B,W^0,\tilde{B}^0)_s,\\
\displaystyle \varphi_t= \varphi'_T+\int_t^T L'_H(q_s,q^0_s,\varphi_s,\tilde{Z}^\varphi_s,\mathcal{L}(\tilde{X}_s,U_s|\mathcal{F}^0_s))ds-\int_t^T \tilde{Z}^\varphi_sd(W^0,\tilde{B}^0)_s,\\
U'_T=g'(X_T,q^0_T,q_T,\mathcal{L}(\tilde{X}_T|\mathcal{F}^0_T)), \quad \varphi'_T=\psi'(q_T,q^0_T,\mathcal{L}(\tilde{X}_T|\mathcal{F}^0_T)),
\end{array}
\right.
\end{equation}
with the notation 
\[F'(x,q^0,q,\varphi,m)=F(x+q^0,q,\varphi,f(q^0)_\#m ),\]
for the translation
\[f(q^0):(x,u)\mapsto (x+q^0,u),\]
and a similar definition for $g',\psi',L'_H,G',H'_z$. For $q^0_0=0$, this is exactly the system \eqref{eq: system common noise}.Consequently if there exists a decoupling field associated to \eqref{system with additive common noise}, then it can be related directly to the decoupling field of \eqref{eq: system common noise}, see \cite{common-noise-in-MFG} Lemma 5.3 for a rigorous argument. Letting $H^{extended}_z=(H'_z,0_{\reels^d})$, For $\Gamma_{cor}=0_{\mathcal{M}_{d\times d^0}(\reels)}$, this falls direcly into the class we have already considered, otherwise, the only difference with this system and the class we treated with Lipschitz solutions is the fact that the equation satisfied by $(\bar{q}_t)_{t\geq 0}=(q_t,q^0_t)_{t\geq 0}$ is of the form 
\[d\bar{q}_s=-H^{extended}_z(\bar{q}_s,\varphi_s,\tilde{Z}_s,\mathcal{L}(\tilde{X}_s,U_s))ds+\Sigma d(W^0,\tilde{B}^0),\]
for a non diagonal matrix 
\[\Sigma=\left(\begin{array}{cc}
    \sqrt{2\sigma^0}I_{d^0} &0\\
    \sqrt{2v^0} \Gamma_{cor} & \sqrt{2v^0}C_{cor}
\end{array}\right).\]
There is no particular difficulty in extending our result to \eqref{system with additive common noise}, since under the transformation we have introduced, coefficients are Lipschitz in $q^0$. We now make the following non degeneracy assumption 
\begin{hyp}
    \label{hyp: non degeneracy}
At least one of the following holds
\begin{enumerate}
    \item[-] $v^0=0$,
    \item[-] $I_d-\Gamma_{cor}\Gamma_{cor}^T$ is invertible
\end{enumerate}
\end{hyp}
This last assumption on the invertibility of $I_d-\Gamma_{cor}\Gamma_{cor}^T$ is truly a non degeneracy assumption. Consider the simple example for $d^0=d=1$, for which $\Gamma_{cor}=\rho\in [-1,1]$, this is equivalent to 
\[|\rho|<1.\]

\begin{lemma}
    Under Hypothesis \ref{hyp: non degeneracy} and if $\sigma^0>0$, there exists a matrix $\Sigma_{inv}\in \mathcal{M}_{d^0\times (d^0+d)}(\reels)$ depending only on $\Gamma_{cor}$ such that 
    \[\forall  x\in \reels^{d^0},y\in \reels^d,\quad \frac{1}{\sqrt{2\sigma^0}}\Sigma_{inv}\cdot \Sigma^T \left(\begin{array}{cc}
        x\\
        y
    \end{array}\right) =x\]
\end{lemma}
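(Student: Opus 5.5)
The plan is to construct $\Sigma_{inv}$ explicitly as a block row matrix that eliminates the $y$-contribution in $\Sigma^T$. First I compute the transpose of the block matrix $\Sigma$ introduced above:
\[
\Sigma^T\left(\begin{array}{c} x\\ y\end{array}\right)=\left(\begin{array}{c}\sqrt{2\sigma^0}\,x+\sqrt{2v^0}\,\Gamma_{cor}^T y\\ \sqrt{2v^0}\,C_{cor}^T y\end{array}\right),\qquad x\in\reels^{d^0},\ y\in\reels^d .
\]
Here $\Gamma_{cor}^T$ is of size $d^0\times d$ and $C_{cor}^T$ of size $d\times d$. The goal is to find a $d^0\times(d^0+d)$ matrix $\Sigma_{inv}=(A,\;B)$ with
\[
A\bigl(\sqrt{2\sigma^0}x+\sqrt{2v^0}\Gamma_{cor}^T y\bigr)+\sqrt{2v^0}\,B\,C_{cor}^T y=\sqrt{2\sigma^0}\,x
\]
for all $(x,y)$. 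Taking $A=I_{d^0}$ reduces this to the single matrix equation $\sqrt{2v^0}\,\bigl(\Gamma_{cor}^T+BC_{cor}^T\bigr)=0$. I would then split according to the two alternatives in Hypothesis \ref{hyp: non degeneracy}.

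In the case $v^0=0$, the equation holds trivially, and I take $\Sigma_{inv}=(I_{d^0},\,0)$.

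In the case where $I_d-\Gamma_{cor}\Gamma_{cor}^T$ is invertible, the identity $C_{cor}C_{cor}^T=I_d-\Gamma_{cor}\Gamma_{cor}^T$ shows that the square matrix $C_{cor}$ is invertible, since $\det(C_{cor})^2\neq 0$. Hence $C_{cor}^T$ is invertible as well, and I choose
\[
\Sigma_{inv}=\bigl(I_{d^0},\;-\Gamma_{cor}^T(C_{cor}^T)^{-1}\bigr).
\]
Direct substitution then gives $\Sigma_{inv}\Sigma^T(x,y)^T=\sqrt{2\sigma^0}\,x$, and dividing by $\sqrt{2\sigma^0}$, which is legitimate because $\sigma^0>0$, yields the claim. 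This choice also covers $v^0=0$ whenever $C_{cor}$ happens to be invertible.

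It remains to check that $\Sigma_{inv}$ depends only on $\Gamma_{cor}$. This holds because $C_{cor}$ is fixed as the Cholesky-type factor of $I_d-\Gamma_{cor}\Gamma_{cor}^T$, so it is a function of $\Gamma_{cor}$ alone; in particular, $\Sigma_{inv}$ does not depend on $\sigma^0$ or $v^0$. There is no genuine obstacle in this argument. The only points requiring care are the bookkeeping of dimensions ($\Gamma_{cor}$ is $d\times d^0$) and the observation that invertibility of $C_{cor}C_{cor}^T$ forces invertibility of $C_{cor}$ because $C_{cor}$ is square.
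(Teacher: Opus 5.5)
Your proof is correct and follows the same route as the paper. Both split along the two alternatives of Hypothesis \ref{hyp: non degeneracy}: they take $\Sigma_{inv}=(I_{d^0},\,0)$ when $v^0=0$, and otherwise use invertibility of the square matrix $C_{cor}$ to choose a second block that cancels the $y$-contribution of $\Sigma^T$. Your block $-\Gamma_{cor}^T(C_{cor}^T)^{-1}$, of size $d^0\times d$, is in fact the dimensionally consistent form of the paper's $-\Gamma_{cor}C_{cor}^{-1}$, which drops the transposes: for $\Gamma_{cor}\in\mathcal{M}_{d\times d^0}(\reels)$, the product $\Gamma_{cor}C_{cor}^{-1}$ is not even defined unless $d=d^0$.
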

\begin{proof}
If $v^0=0$, then $\Sigma_{inv}=\left(\begin{array}{cc}
     I_{d^0} & 0
\end{array}
\right)$. Otherwise, the invertibility of 
\[I_d-\Gamma_{cor}\Gamma_{cor}^T\]
is equivalent to the invertibility of its square root $C_{cor}$, consequently we can take 
\[\Sigma_{inv}=\left(\begin{array}{cc}
    I_{d^0} & -\Gamma_{cor}C_{cor}^{-1}
\end{array}\right)\]
\end{proof}
Let us now observe that if \eqref{system with additive common noise}, admits a smooth decoupling field $(U,\varphi)$ then by Ito's lemma
\[\forall t\in [0,T], \quad \tilde{Z}^\varphi_t=\Sigma^T\left(\begin{array}{cc}
        \nabla_q \varphi\\
        \nabla_{q^0} \varphi
    \end{array}\right) (t,q_t,q^0_t,\mathcal{L}(X_t|\mathcal{F}^0_t)).\]
    Consequently as soon as the non degeneracy assumption, Hypothesis \eqref{hyp: non degeneracy} holds, the case of mean field games corresponds to taking 
    \[\bar{H}(q,q^0,\varphi,z,\mu)=H'(q,q^0,\varphi,\frac{1}{\sqrt{2\sigma^0}}\Sigma_{inv}\cdot z,\mu),\]
    up to a rescaling by $\sqrt{2\sigma^0}$, we can get back exactly a system of the form \eqref{mfg with major} and since $z\mapsto \Sigma_{inv}\cdot z$ is a Lipschitz function, this fits into the framework we have already developed. Formally this argument is a consequence of the fact that as soon as the covariance matrix of the common noise is not degenerate, up to a change of variables, the additive common noise can be seen as an uncontrolled part of the state of the major player. 
    \begin{remarque}
        It is possible to consider a non constant correlation through time 
        \[\forall t>0, \quad \Gamma^t_{cor}= Cor (B^0_t,W^0_t)=\frac{1}{t}Cov(B^0_t,W^0_t),\]
        so long as
        \[\exists \kappa>0, \quad \forall t\in (0,T), \quad \|\left(C_{cor}^t\right)^{-1}\|\leq \kappa ,\]
        where $\|\cdot \|$ indicates any norm on $\mathcal{M}_d(\reels)$ and 
        \[C_{cor}^t=\sqrt{I_d-\Gamma^t_{cor}\left(\Gamma^t_{cor}\right)^T}\]
        . Indeed in this case 
        \[(t,q,q^0,\varphi,z,\mu)\mapsto H'(q,q^0,\varphi,\Sigma_{inv}^t \cdot z,\mu),\]
        is Lipschitz uniformly in time and the extension is straighforward. 
    \end{remarque}

\section{Global wellposedness of Lipschitz solutions to the master system}
\subsection{General study under Hilbertian monotonicity}
The long time analysis of \eqref{mfg with major} relies on a regularization by common noise argument. Altough the setting and method of proof are vastly different, the result is analogous to \cite{minor-major-delarue}, namely if the volatility associated to the common noise is sufficiently large compared to the horizon of the game, we can obtain estimates which are sufficient to conclude to the existence and uniqueness of a strong solution to the system \eqref{mfg with major}. Under sufficiently strong assumptions we also show that there exists a volatility threshold independent of the time interval considered, such that if the volatility of the major player is above this threshold, existence and uniqueness hold over arbitrarily long time intervals.

Letting $H_{major}$ indicates the Hamiltonian of the major player, throughout this section the following assumption will be in force.
\begin{hyp}
    \label{hyp: coefficients mfgmp}
\begin{enumerate}
    \item[-] $F,G$ do not depend on $\varphi$ and there exists a constant $\lambda \geq 0$ and a function $H$ such that 
\[
\forall (\mu,q,\varphi,z)\in \mathcal{P}_2(\reels^{2d})\times \reels^{2d^0+1}, \quad H_{major}(q,\varphi,z,\mu)=\lambda \varphi+H^0(q,z,\mu).
\]
    \item[-] Hypothesis \ref{hyp: locally lipschitz in z} holds for $(\psi,g,D_zH^0,L_{major},F,G)$, with
    \[\forall (q,z,m)\in \reels^{2m}\times \mathcal{P}_2(\reels^{2d}), \quad  L_{major}(q,\varphi,z,m)=H_{major}(q,\varphi,z,m)-D_zH_{major}(q,\varphi,z,m)\cdot z.\]
\item[-] there exists a constant $C_{H}$ and a power $\delta \in [0,1]$ such that 
\begin{gather*}
 \forall (\mu,q),(\nu,q')\in \mathcal{P}_2(\reels^{2d})\times \reels^{m}, \forall z\in \reels^{d^0}\\
 |(H^0-D_z H^0\cdot z)(q,z,\mu)-(H^0-D_z H^0\cdot z)(q',z,\nu)|\leq C_H(1+|z|^\delta)\left(|q-q'|+\mathcal{W}_2(\mu,\nu) \right).
\end{gather*}
\end{enumerate}
\end{hyp}
Under this assumption, the function $L_{major}$ is such that 
\[L_{major}(q,\varphi,z,\mu)=\lambda \varphi+L_H(q,z,\mu),\]
with 
\[L_H(q,z,\mu)= H^0(q,z,\mu)-D_z H^0(q,z,\mu)\cdot z.\]
While the assumptions on the Hamiltonian of the major player $H^0$ can be relaxed, it makes the statement of our main result easier to read. The fact that coefficients depend on $\varphi$ through a discount term only is also quite natural in the context of optimal control. The second assumption on the regularity of $H^0$ is more restrictive but still holds for a wide variety of Hamiltonians.
\begin{exemple}
Let $H^0$ be of the form 
\[H^0(q,z,\mu)= b(q,\mu)\cdot z+h(q,\mu,z),\]
for a Lipschitz function $b$, and a function $h$ such that 
\[\exists C>0,\quad \forall z\in \reels^{d^0},\quad \|h(\cdot,z)\|_{Lip}\leq C(1+|z|^\delta),\quad  \|\nabla_z h(\cdot,z)\|_\infty\leq C(1+\omega(|z|)),\]
and 
\[\|\nabla_z h(\cdot,z)\|_{Lip}\leq \frac{C}{1+|z|^{1-\delta}},\]
for some $\delta\in [0,1]$ and an increasing function $\omega:\reels^+\to \reels^+$. Then $H^0$ satisfies Hypothesis \ref{hyp: coefficients mfgmp} for this particular $\delta$. This includes any power Hamiltonian 
\[h\equiv |z|^p,\]
for some $p\geq 1$, or more generally any Hamiltonian such that both $H^0,D_zH^0$ are Lipschitz in $(q,\mu)$ uniformly in $z$. 
\end{exemple}
Now that we have introduced our regularity assumption, we also make an assumption on the monotonicity of coefficients
\begin{hyp}
\label{hyp: monotonicity mfgmp}
There exists a symmetric matrix $A\in M_m(\reels)$, a constant $\beta_0>0$ such that 
\begin{gather}
\nonumber \forall (X,q),(Y,q')\in \mathcal{H}^{d}\times \reels^{d^0},\\
\label{eq: monotonicity on initial condition} \langle g(X,q,\mathcal{L}(X))-g(Y,q',\mathcal{L}(Y)),X-Y\rangle+ \frac{1}{2}(q-q')\cdot A(q-q')\\
\nonumber \geq \beta_0 \|\psi(q,\mathcal{L}(X))-\psi(q',\mathcal{L}(Y))\|^2.
\end{gather}
Moreover there also exists a constant $\kappa>0$ such that
\begin{gather}
   \nonumber \forall (X,U,q,\varphi),(Y,V,q',\varphi')\in \mathcal{H}^{2d}\times \reels^{d^0+1}, \quad z\in \reels^{d^0}\\
   \label{eq: monotonicity on coefficients}\langle \Delta (\tilde{G},\tilde{F},AD_zH^0),\Delta \bar{X}\rangle\geq \kappa \|\Delta \bar{X}\|^2 
\end{gather}
where 
\[\Delta (\tilde{G},\tilde{F},AD_zH^0)=\left(\begin{array}{c}
\tilde{G}(X,U,q,z)-\tilde{G}(Y,V,q',z)\\
\tilde{F}(X,U,q,z)-\tilde{F}(Y,V,q',z)\\
AD_z\tilde{H}^0(X,U,q,z)-AD_z\tilde{H}^0(Y,V,q',z)
\end{array}\right) \text{ and }  \Delta \bar{X}=\left(\begin{array}{c}
X-Y\\
U-V\\
q-q'
\end{array}\right). \]
\end{hyp}
This monotonicity assumption is inspired by previous works on mean field games with an external source of noise \cite{noise-add-variable,common-noise-in-MFG}, though those works did not address controlled dynamics (i.e. a major player). In fact it is the exact same monotonicity assumption if we ignore the dependency on the variable $z$. By itself \eqref{eq: monotonicity on initial condition} is not such a strong assumption. For example it is satisfied for $A=cI_m$ for some $c\geq 0$ as soon as $g$ is strongly monotone in $X\in H$, which is not an unusual assumption in the literature on displacement monotone MFGs \cite{disp-monotone-1,disp-monotone-2} and related transport equations \cite{monotone-sol-meynard}. The second part however \ref{eq: monotonicity on coefficients} has strong implications for the dynamics. Indeed it requires the drift of the major player $D_z H^0$ to be monotone in its state $q$. 
\begin{exemple}
A typical exemple in which the drift of the major player is monotone in $q$ is whenever its Hamiltonian is of the form 
\[H^0(q,z,m)=b(q,m)\cdot z+ H^0(z,m),\]
with a function $b$ monotone in $q$. This corresponds to a controlled process of the form 
\[dq^\alpha_t=-(b(q^\alpha_t,m_t)+\alpha_t)dt+\sqrt{2\sigma^0}dW^0_t.\]

\end{exemple}
In particular, if \eqref{eq: monotonicity on initial condition} holds for $\beta_0=0$ and the triple $(F,G,b)$ satisfies \eqref{eq: monotonicity on coefficients}, then for any admissible initial condition $(X,q)$ and any adapted process $(Z^0_t)_{t\in [0,T]}$ such that 
\[\esp{\int_0^T\left(|F(0,Z^0_s)|^2+|G(0,Z^0_s)|^2+|D_zH^0(0,Z^0_s)|^2\right)ds}<+\infty,\]
there exists a unique strong solution to the uncontroled dynamics
\[
\left\{
\begin{array}{l}
   dX_t=-F(X_t,q_t,U_t,Z^0_t,\mathcal{L}(X_t,U_t|\mathcal{F}^0_t))dt+\sqrt{2\sigma}dB_t, \quad X_0=X\\
   dq_t= -D_zH^0(q_t,Z^0_t,\mathcal{L}(X_t,U_t|\mathcal{F}^0_t))dt+\sqrt{2\sigma^0}dW^0_t, \quad q_0=q\\
   dU_t= G(X_t,q_t,U_t,Z^0_t,\mathcal{L}(X_t,U_t|\mathcal{F}^0_t))dt+Z_td(B_t,W^0_t), \quad U_T=g(X_T,q_T,\mathcal{L}(X_t\mathcal{F}^0_t))\\
   \mathcal{F}^0_t=\sigma(q,(W^0_s)_{s\leq t}),
\end{array}
\right.
\]
where 
\[F(0,z)=F(0,0,0,z,\delta_{0_{\reels^{2d}}}).\]
This follows easily by adapting results from \cite{common-noise-in-MFG}. Formally this ensures the wellposedness of mean field games with a major player for open loop controls.
\begin{lemma} 
    Under Hypotheses \ref{hyp: coefficients mfgmp} and \ref{hyp: monotonicity mfgmp}, there exists a matrix $A\in \mathcal{M}_{d^0}(\reels)$, constants $\kappa,\beta>0, C_M\geq 0$ and an increasing function $K:\reels^+\to \reels^+$ such that the following monotonicity assumption is satisfied
\begin{gather}
   \nonumber \forall (X,U,q,z),(Y,V,q',z')\in \mathcal{H}^{2d}\times \reels^{2d^0+1},\\
   \label{eq: monotonicity on coefficients with z}\langle \Delta (\tilde{G},\tilde{F},AD_zH^0),\Delta \bar{X}\rangle+ (C_M+K(|z|\wedge |z'|))|z-z'|^2\geq \kappa \|\Delta \bar{X}\|^2 
\end{gather}
where 
\[\Delta (\tilde{G},\tilde{F},AD_zH^0)=\left(\begin{array}{c}
\tilde{G}(X,U,q,z)-\tilde{G}(Y,V,q',z')\\
\tilde{F}(X,U,q,z)-\tilde{F}(Y,V,q',z')\\
AD_z\tilde{H}^0(X,U,q,z)-AD_z\tilde{H}^0(Y,V,q',z')
\end{array}\right) \text{ and }  \Delta \bar{X}=\left(\begin{array}{c}
X-Y\\
U-V\\
q-q'
\end{array}\right), \]
\end{lemma}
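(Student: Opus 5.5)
The plan is to reduce \eqref{eq: monotonicity on coefficients with z} to the $z$-frozen monotonicity \eqref{eq: monotonicity on coefficients} of Hypothesis \ref{hyp: monotonicity mfgmp}, and to control the $z$-increments with the local Lipschitz bound of Hypothesis \ref{hyp: locally lipschitz in z} (which Hypothesis \ref{hyp: coefficients mfgmp} imposes on $F,G,D_zH^0$), through a Young inequality. We keep the matrix $A$ from Hypothesis \ref{hyp: monotonicity mfgmp} and replace $\kappa$ by $\kappa/2$.

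Fix $(X,U,q,z)$ and $(Y,V,q',z')$, and assume by symmetry that $|z|\le |z'|$, so that $|z|\wedge|z'|=|z|$. Insert the intermediate point $(Y,V,q',z)$ and split
\[\Delta(\tilde G,\tilde F,AD_z\tilde H^0)=\Delta_1+\Delta_2,\]
where $\Delta_1$ is the difference between the evaluations at $(X,U,q,z)$ and $(Y,V,q',z)$, and $\Delta_2$ is the difference between the evaluations at $(Y,V,q',z)$ and $(Y,V,q',z')$. Since both points in $\Delta_1$ share the same $z$, \eqref{eq: monotonicity on coefficients} gives $\langle \Delta_1,\Delta\bar X\rangle\ge \kappa\|\Delta\bar X\|^2$.

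For $\Delta_2$, only the $z$ variable changes. Hypothesis \ref{hyp: locally lipschitz in z} then gives, pointwise and hence in $\mathcal{H}$,
\[|\Delta_2|\le (1+|A|)\,(C_{coef}+\omega(|z|))\,|z-z'|.\]
Here the measure arguments coincide, so only the $|z-z'|$ term survives. It is important that the constant is evaluated at $|z|\wedge|z'|$, as written in the hypothesis. Cauchy–Schwarz and Young then yield
\[|\langle\Delta_2,\Delta\bar X\rangle|\le \frac{\kappa}{2}\|\Delta\bar X\|^2+\frac{(1+|A|)^2}{2\kappa}(C_{coef}+\omega(|z|\wedge|z'|))^2|z-z'|^2.\]

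Combining the two estimates gives \eqref{eq: monotonicity on coefficients with z} with constant $\kappa/2$, and with $C_M+K(r)=(1+|A|)^2(C_{coef}+\omega(r))^2/\kappa$. For instance, take $C_M=(1+|A|)^2 C_{coef}^2/\kappa$ and $K(r)=(1+|A|)^2(2C_{coef}\omega(r)+\omega(r)^2)/\kappa$. This $K$ is increasing because $\omega$ is increasing and nonnegative.

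The only delicate point is the ordering of the intermediate point. We must freeze the \emph{smaller} of $|z|,|z'|$ when applying \eqref{eq: monotonicity on coefficients}, so that the Lipschitz modulus in the $z$-increment is controlled by $\omega(|z|\wedge|z'|)$ rather than by $\omega(|z'|)$. The mismatch between the scalar $z$ in the statement and the process-valued $z$ in $\mathcal{H}$ is harmless, because all bounds are pointwise before taking expectations.
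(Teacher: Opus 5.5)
Your proposal is correct and follows the paper's own one-line argument: keep the matrix $A$ of Hypothesis~\ref{hyp: monotonicity mfgmp}, apply \eqref{eq: monotonicity on coefficients} at a common $z$, and absorb the pure $z$-increment using the local Lipschitz bound of Hypothesis~\ref{hyp: locally lipschitz in z} together with Cauchy--Schwarz and Young, which you simply write out in detail. Two cosmetic points: that hypothesis already evaluates the modulus at $|z|\wedge|z'|$ for any pair of points, so the choice of which $z$ to freeze is irrelevant (there is no ``delicate ordering''); and the prefactor bounding the stacked increment should be $\sqrt{2+|A|^2}$ rather than $1+|A|$, which only changes the constants.
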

\begin{proof}
Since coefficients are locally Lipschitz in the variable $z$ and Lipschitz in the other variables, it suffices to take any $A$ for which Hypothesis \ref{hyp: monotonicity mfgmp} holds, and use Cauchy Schwarz inequality. 
\end{proof}
This is the notion of monotonicity that we will use for the long time analysis of \eqref{mfg with major}. 
\begin{thm}
\label{thm: existence for sigma T mfgmp}
Under Hypotheses \ref{hyp: coefficients mfgmp} and \ref{hyp: monotonicity mfgmp}, for any $T>0$ there exists a $\sigma^0_T$ depending on $T,\kappa,C_{coef},\omega,\beta_0,C_H,\delta$ such that for any $\sigma^0>\sigma^0_T$ there exists a unique Lipschitz solution to \linebreak MFGMP$(T,\psi,g,D_z H^0,L_{major},F,G)$ on $[0,T]$. 
\end{thm}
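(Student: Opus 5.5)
The plan is to use the continuation criterion of Theorem \ref{thm: existence mfgmp}. A Lipschitz solution exists on a maximal interval $[0,T_c)$, and it extends to $[0,T]$ as soon as
\[\sup_{t<T_c}\|(U,\varphi)(t,\cdot)\|_{Lip}\]
is finite. Uniqueness is then given by Theorem \ref{lemma: uniqueness lip sol}. So everything reduces to an a priori Lipschitz estimate on any Lipschitz solution defined on $[0,t]$, with $t\le T$. The estimate must depend only on $T,\kappa,C_{coef},\omega,\beta_0,C_H,\delta$ and hold once $\sigma^0>\sigma^0_T$.

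\textbf{Step 1: monotonicity estimate.} Fix $t$ and two admissible initial conditions $(X_0,q_0)$ and $(\tilde X_0,\tilde q_0)$, and take the associated strong solutions of \eqref{eq: def lip sol system}. By Lemma \ref{lemma: Z bound by lip}, $|Z^\varphi|,|\tilde Z^\varphi|\le M(t):=\sup_{s\le t}\|\nabla_q\varphi(s,\cdot)\|_\infty$. I would apply Itô's formula to
\[\langle U_s-\tilde U_s,X_s-\tilde X_s\rangle+\tfrac12\,\mathbb{E}\big[(q_s-\tilde q_s)\cdot A(q_s-\tilde q_s)\big].\]
The drift terms produce the quantity in \eqref{eq: monotonicity on coefficients with z}, up to a sign convention that will have to be checked. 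The terminal term is handled by \eqref{eq: monotonicity on initial condition}, which yields $\beta_0\|\psi-\tilde\psi\|^2$ at time $T$. The outcome is a bound
\[\kappa\int\|\Delta\bar X_s\|^2ds+\beta_0\|\Delta\psi_T\|^2\le C\|\Delta(X_0,q_0)\|\,\|\Delta U_{T-t}\|+(C_M+K(M))\int\|\Delta Z^\varphi_s\|^2ds.\]

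\textbf{Step 2: the major player's BSDE.} Next I apply Itô's formula to $|\varphi_s-\tilde\varphi_s|^2$. The stochastic integral contributes $2\sigma^0\int\|\Delta Z^\varphi\|^2$ on the left-hand side. The driver is $\lambda\varphi+L_H$, and by Hypothesis \ref{hyp: coefficients mfgmp} its increment in $(q,\mu)$ is bounded by $C_H(1+M^\delta)\|\Delta\bar X\|$. Its increment in $z$ is $-D^2_{zz}H^0\,z\cdot\Delta z$, which is the delicate term. I would instead linearize the $z$-difference as a Girsanov drift, since $z\mapsto L_H$ is locally Lipschitz with constant $\omega(M)$. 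A cleaner option is to keep it as $\omega(M)\|\Delta Z^\varphi\|\,\|\Delta\varphi\|$ and absorb it into $\sigma^0\|\Delta Z^\varphi\|^2$ by Young's inequality, at the price of a factor $\omega(M)^2/\sigma^0$ on $\|\Delta\varphi\|^2$, followed by Gronwall over $[0,T]$. This gives
\[\|\Delta\varphi_{T-t}\|^2+\sigma^0\int\|\Delta Z^\varphi\|^2\le e^{C(1+\omega(M)^2/\sigma^0)T}\Big(\|\Delta\psi_T\|^2+C(1+M^{2\delta})\int\|\Delta\bar X\|^2\Big).\]
Feeding Step 1 into this, the $Z^\varphi$ term on the right of Step 1 becomes negligible once $\sigma^0\gg (C_M+K(M))(1+M^{2\delta})e^{CT}/\min(\kappa,\beta_0)$. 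The outcome is $\|\Delta\varphi_{T-t}\|\le C_1(M)\|\Delta(X_0,q_0)\|$, together with a bound on $\|\Delta U\|$ obtained from a standard BSDE estimate given the bound on $\int\|\Delta\bar X\|^2$.

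\textbf{Step 3: closing the bootstrap.} This is the main obstacle, because the constants depend on $M$ itself. The key point is that the sublinear growth $M^\delta$ with $\delta\le1$ in the Lipschitz constant of $L_H$, combined with dividing by $\sigma^0$, should yield an implicit inequality of the form
\[M\le C\Big(1+\tfrac{(C_M+K(M))(1+M^{\delta})}{\sigma^0}\Big)^{1/2}+C\quad\text{or similar.}\]
I would argue by continuity in $t$. The quantity $M(t)$ is continuous in $t$ (using the local estimate of Lemma \ref{lemma: local lipschitz estimate} near each time), and $M(0)\le C_{coef}$. Choose $\bar M=2C_1(\text{data})$, and then $\sigma^0_T$ so large that $M(t)\le\bar M$ forces $M(t)\le\bar M/2$. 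This requires $\omega(\bar M),K(\bar M)$ to enter only through quotients by $\sigma^0$. If the $e^{\omega(M)^2T/\sigma^0}$ factor does not behave as hoped, I would instead control the $z$-increment through a change of measure, using the boundedness of $Z^\varphi$ from Lemma \ref{lemma: Z bound by lip}, so that it disappears from the energy estimate. Finally, the Lipschitz bounds for $U$ in $x$ and in $\mu$ follow as in the end of the proof of Lemma \ref{lemma: local lipschitz estimate}, via the characteristics \eqref{eq: caracteristics in x fbsde with major}. The uniform bound up to $T_c$ then contradicts $T_c<T$, and the last statement of Theorem \ref{thm: existence mfgmp} gives a solution on $[0,T]$.
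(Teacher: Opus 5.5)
Your reduction to an a priori Lipschitz bound and your Steps 1--2 are fine, but the bootstrap in Step 3 does not close for the full statement, which allows $\delta=1$. Contrary to what you assert, $M$ does not enter only through quotients by $\sigma^0$. In Step 2, the $(q,\mu)$-increment of $L_H$ contributes $C_H^2(1+M^\delta)^2\int\|\Delta\bar X_s\|^2ds$, and $\sigma^0$ only controls the $Z^\varphi$-part.

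To see the consequence, take $X_0=\tilde X_0$ with deterministic $q_0,\tilde q_0$. Note that the initial term in Step 1 is $\langle\Delta U_{T-t},\Delta X_0\rangle+\frac12\Delta q_0\cdot A\Delta q_0$, and the second piece is what drives the $q$-Lipschitz bound. Once the $Z^\varphi$ terms are absorbed, Step 1 gives $\int\|\Delta\bar X\|^2\lesssim\|A\||\Delta q_0|^2/\kappa$ and $\|\Delta\psi_T\|^2\lesssim\|A\||\Delta q_0|^2/\beta_0$. Step 2 then yields
\[
M^2\le e^{cT}\|A\|\Big(\frac{1}{\beta_0}+\frac{C\,C_H^2}{\kappa}\big(1+M^{2\delta}\big)\Big).
\]
For $\delta<1$ this closes, and your route would then be a legitimate alternative. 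For $\delta=1$ it reads $M^2\le a+bM^2$ with $b\simeq C_H^2\|A\|e^{cT}/\kappa$. This $b$ is not smaller than $1$ in general and does not depend on $\sigma^0$. Optimizing the Young parameter in your Gronwall step only brings $b$ down to order $C_H^2\|A\|T/\kappa$, which is still not small for long horizons. A change of measure does not help either, since the obstruction is the $(q,\mu)$-increment of the driver, not its $z$-increment.

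The missing idea is to couple the major player's energy to the monotonicity functional with a time-decaying weight, as the paper does. Set
\[
V_s=\langle\Delta U_s,\Delta X_s\rangle+\tfrac12\Delta q_s\cdot A\Delta q_s-\beta(T-s)|\Delta\varphi_s|^2,
\]
with $\beta(s)=\beta_0e^{(2\lambda-\gamma)s}$. Since $\beta(0)\le\beta_0$, one has $\mathbb{E}[V_T]\ge0$ by \eqref{eq: monotonicity on initial condition}. The cross term $2\beta\,\Delta L_H\,\Delta\varphi$ is split by Young with parameter $\gamma$, and the term $\sigma^0\beta|\Delta Z^\varphi|^2$ absorbs $(C_M+K)|\Delta Z^\varphi|^2$.

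Propagating $\mathbb{E}[V_{T-t}]\ge0$ with $X^1=X^2$ then gives a bound independent of the solution: $\|\nabla_q\varphi(t,\cdot)\|_\infty\le\sqrt{\|A\|/\beta(t)}$. This bound matches the weight pointwise in time, since $\beta(T-s)C_H^2(1+|Z^\varphi_s|^{2\delta})\le C_H^2(\beta+\beta^{1-\delta}\|A\|^\delta)$. Hence, for every $\delta\le1$, one can fix $\gamma^*=\frac{2}{\kappa}C_H^2(\|A\|+\beta_0)$ from the data. One then defines $\sigma^0_T$ by evaluating $\omega$ and $K$ at $\sqrt{\|A\|/\beta^*(T)}$; the $T$-dependence of the threshold comes precisely from the decay of $\beta^*$.

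Equivalently, you could repair your argument by weighting Step 2 with $e^{-\mu(T-s)}$ and bootstrapping a time-dependent bound $\|\nabla_q\varphi(s,\cdot)\|_\infty^2\le Ke^{\mu s}$ instead of a single constant $M$. A global-in-time $M$ with plain Gronwall loses exactly this compensation.
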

\begin{proof}
We already know that under Hypothesis \ref{hyp: coefficients mfgmp}, there exits a maximal time $T_c\leq T$ such that there exists a Lipschitz solution $(U,\varphi)$ to MFGMP$(T,\psi,g,D_z H^0,L_{major},F,G)$ on $[0,T_c]$ by Theorem \ref{thm: existence mfgmp}. Let us assume by contradiction that $T_c<T$. 

\noindent\textit{Step 1: propagatation of monotonicity on small time intervals}

Let $t\leq T_c$, we fix $(X^1,q^1)$, $(X^2,q^2)$ two set of admissible initial conditions and for $i=1,2$, let $(X^i_s,q^i_s,U^i_s,\varphi^i_s,Z^i_s,Z^{\varphi,i}_s)_{s\in [0,t]}$ be the associated solution to \eqref{eq: def lip sol system}. We now introduce the process $(V_s)_{s\in [T-t,T]}$ defined by
\begin{equation}
    \label{eq: prop monotonicity mfgmp}
    \forall s\in [T-t,T], \quad V_s= \langle U^1_s-U^2_s,X^1_s-X^2_s\rangle+\frac{1}{2}\langle q^1_s-q^2_s,A(q^1_s-q^2_s)\rangle-\beta(T-s)|\varphi^1_s-\varphi^2_s|^2, \end{equation}
for $\beta:\reels\to \reels$ a smooth function of time satisfying $\beta(0)\leq \beta_0$. In particular, let us remark that by \eqref{eq: monotonicity on initial condition} of Hypothesis \ref{hyp: monotonicity mfgmp}, 
\[\espcond{V_T}{\mathcal{F}^0_T}\geq 0 \quad a.s.\]
We now apply Ito's Lemma to $(V_s)_{s\in [T-t,T]}$.
\begin{align*}
\esp{V_T}=&\esp{V_0-\int_{T-t}^T  (\Delta (G,F,AD_zH^0)_s)\cdot(\Delta (X,U,q)_s)ds-\int_{T-t}^T \sigma^0 \beta(t-s)|Z^{\varphi,1}_s-Z^{\varphi,2}_s|^2ds}\\
&+\esp{\int_{T-t}^T \left(\frac{d\beta}{ds}-2\lambda \beta\right)(T-s)|\varphi^1_s-\varphi^2_s|^2ds-\int_{T-t}^T 2\beta(T-s)({L_H}^1_s-{L_H}^2_s)(\varphi^1_s-\varphi^2_s)ds},
\end{align*}
where we have used the notation $\Delta (X,U,q)_s=(X^1_s-X^2_s,U^1_s-U^2_s,q^1_s-q^2_s)$, the definition of $\Delta (G,F,AD_zH^0)_s$ being analogous. Also for $i=1,2$, \[{L_H}^i_s=L_H(q^i_s,\varphi^i_s,Z^{\varphi,i}_s,\mathcal{L}(X^i_s,U^i_s|\mathcal{F}^{0,i}_s)).\]
Using \eqref{eq: monotonicity on coefficients with z}, and taking the expectation of conditional expectation with respect to $\mathcal{F}^0_{T-t}$, we deduce that
\begin{gather*}-\esp{\int_{T-t}^T  (\Delta (G,F,AD_zH^0)_s)\cdot(\Delta (X,U,q)_s)ds}\\
    \leq \esp{\int_{T-t}^T\left(C_M+K(|Z^{\varphi,1}_s|\wedge |Z^{\varphi,2}_s|)|Z^{\varphi,1}_s-Z^{\varphi,2}_s|^2-\kappa |\Delta (X,U,q)_s|^2 \right)ds }.
\end{gather*}
On the other hand using Hypothesis \ref{hyp: coefficients mfgmp} on $H^0$, for any $\gamma>0$
\begin{gather*}\esp{\int_{T-t}^T \left(\frac{d\beta}{ds}-2\lambda \beta\right)(t-s)|\varphi^1_s-\varphi^2_s|^2ds-\int_{T-t}^T \beta(T-s)({L_H}^1_s-{L_H}^2_s)(\varphi^1_s-\varphi^2_s)ds}\\
    \leq \esp{\int_{T-t}^T \left(\frac{d\beta}{ds}(T-s)-(2\lambda-\gamma) \beta(T-s) \right)|\varphi^1_s-\varphi^2_s|^2ds+\int_{T-t}^T \frac{1}{\gamma}\beta(T-s)\left(C^2_H(1+|Z^{\varphi,1}_s|^{2\delta})|\Delta (X,U,q)_s|^2\right)ds}\\
    +\esp{\int_{T-t}^T \frac{1}{\gamma}\beta(T-s)\omega^2(|Z^{\varphi,1}_s|\wedge |Z^{\varphi,2}_s|)|Z^{\varphi,1}_s-Z^{\varphi,2}_s|^2ds }.
\end{gather*}
Fixing $\gamma$ and letting $\beta(s)=\beta_0 e^{(2\lambda-\gamma)s}$, it follows that 
\begin{align*}
    \esp{V_T}\leq& \esp{V_{T-t}}+\esp{\int_{T-t}^T \left(\frac{1}{\gamma}\beta(T-s)\left(C^2_H(1+|Z^{\varphi,1}_s|^{2\delta})-\kappa \right)|\Delta (X,U,q)_s|^2\right)ds}\\
    &\esp{\int_{T-t}^T \left(\frac{1}{\gamma}\beta(T-s)\omega^2(|Z^{\varphi,1}_s|\wedge |Z^{\varphi,2}_s|)+C_M+K(|Z^{\varphi,1}_s|\wedge |Z^{\varphi,2}_s|)-\sigma^0 \beta(t-s)\right)|Z^{\varphi,1}_s-Z^{\varphi,2}_s|^2ds}.
\end{align*}
Let us assume that 
\begin{equation}
    \label{eq: local in time sigma greater}
    \sigma^0>\frac{1}{\gamma}\omega^2(\|\nabla_q \psi\|_\infty )+\frac{1}{\beta_0}\left(C_M+K((\|\nabla_q \psi\|_\infty )\right).
\end{equation}
By Lemma \ref{lemma: local lipschitz estimate}, on a sufficiently small interval of time $[0,t^*]$ there exists a constant $C$ depending on the coefficients such that 
\[\forall s\leq t^*, \quad \|\nabla_q \varphi(s,\cdot)\|_\infty \leq \|\nabla_q \psi(\cdot)\|_\infty+Cs.\]
By Lemma \eqref{lemma: Z bound by lip}, it follows that for any $\gamma>0$, there exists a time $t^*_\gamma$ depending on the coefficients $\beta_0$ and $\gamma$ such that for $t\leq t^*_\gamma$
\[\esp{\int_{T-t}^T \left(\frac{1}{\gamma}\beta(T-s)\omega^2(|Z^{\varphi,1}_s|\wedge |Z^{\varphi,2}_s|)+C_M+K(|Z^{\varphi,1}_s|\wedge |Z^{\varphi,2}_s|)-\sigma^0 \beta(t-s)\right)|Z^{\varphi,1}_s-Z^{\varphi,2}_s|^2ds}\leq 0.\]
Choosing $\gamma$ sufficiently large, and $t^*_\gamma$ sufficiently small, we may also assume that for $t\leq t^*_\gamma$
\[\esp{\int_{T-t}^T \left(\frac{1}{\gamma}\beta(T-s)\left(C^2_H(1+|Z^{\varphi,1}_s|^{2\delta})-\kappa \right)|\Delta (X,U,q)_s|^2\right)ds}\leq 0.\]
For any $t\in [0,t^*_\gamma]$, 
\[\esp{V_{T-t}}\geq 0.\]
Since this is true for any initial condition. Taking $X^1=X^2$ and a deterministic initial condition for the major player
\[\forall (t,q^1,q^2,\mu)\in[0,t^*_h]\times \reels^{2m}\times \mathcal{P}_2(\reels^d),\quad \beta(t)|\varphi(t,q^1,\mu)-\varphi(t,q^2,\mu)|^2\leq (q^1-q^2)\cdot A(q^1-q^2).\]
Naturally, 
\[\forall t\in [0,t^*_\gamma], \quad \| \nabla_q\varphi(t,\cdot)\|_\infty\leq \sqrt{\frac{\|A\|}{\beta(t)}}.\]
Combining this estimate with Lemma \ref{lemma: Z bound by lip}, for any $t\leq t^*_\gamma$ and any initial conditions $(X^1,q^1), (X^2,q^2)$

\begin{gather*}\esp{\int_{T-t}^T \left(\frac{1}{\gamma}\beta(t-s)\left(C^2_H(1+|Z^{\varphi,1}_s|^{2\delta})-\kappa \right)|\Delta (X,U,q)_s|^2\right)ds}\\
    \leq \esp{\int_{T-t}^T \left(\frac{1}{\gamma}\left(C^2_H(\|A\|+\beta(T-s))-\kappa \right)|\Delta (X,U,q)_s|^2\right)ds}.\end{gather*}
In light of this new estimate, we may choose
\[\gamma^*= \frac{2}{\kappa}C^2_H(\|A\|+\beta_0),\]
and 
\[\beta^*(t)=\beta_0 e^{(2\lambda-\gamma^*)t},\]
independent of $\sup_{[0,t^*_\gamma]}\|(U,\varphi)(t,\cdot)\|_{Lip}$.

\noindent\textit{Step 2: extension to the interval of definition}

We now define 
\[\sigma^0_T=\frac{1}{4\gamma^*}\omega^2\left(\sqrt{\frac{\|A\|}{\beta^*(T)}}\right)+\frac{1}{\beta^*(T)}\left(C_M+K\left(\sqrt{\frac{\|A\|}{\beta^*(T)}}\right)\right),\]
and we assume that $\sigma^0>\sigma^0_T$. First we notice that $(U,\varphi)(t^*_\gamma,\cdot)$ satisfy \eqref{eq: monotonicity on initial condition} for $\tilde{\beta}_0=\beta^*(t^*_h)$. Since $\sigma^0>\sigma^0_T$, in particular we also have $\sigma^0>\sigma^0_t$ and \eqref{eq: local in time sigma greater} holds for $\sigma^0,\varphi(t^*_\gamma,\cdot)$. Clearly, we can repeat Step 1, and choose the same $\gamma^*$ since it was independent of $(U,\varphi)$. Since this is true as long as $\|(U,\varphi)(t,\cdot)\|_{Lip}<+\infty$, we deduce that for any $t\leq T_c$ and any initial conditions $(X^1,q^1), (X^2,q^2)$, 
\begin{equation}
\label{eq: prop monotonicity mfgmp 2}
    \forall s\in [0,t],\quad \esp{V_{T-s}}\geq \esp{V_{T}}\geq 0.\end{equation}

\noindent\textit{Step 3: regularity from monotonicity}
From \eqref{eq: prop monotonicity mfgmp 2}, we already have the following estimate 
\[\forall s\in [T-t,T],\quad \|\varphi^1_s-\varphi^2_s\|^2\leq \frac{\|A\|}{\beta(T-s)}\|q^1_s-q^2_s\|^2+\frac{1}{\beta(T-s)}\|X^1_s-X^2_s\|\|U^1_s-U^2_s\|.\]
Consequently, it is sufficient to show that we can get Lipschitz estimates uniformly on $[0,T]$. To that end, we now make use of the strong monotonicity of coefficients. Indeed since $\sigma^0>\sigma^0_T$ and thanks to our choice of $\gamma^*$ there exists a constant $c_\kappa$ such that 
\[\esp{V_T}\leq \esp{V_{T-t}-c_\kappa\int_{T-t}^T |\Delta (X,U,q,Z^{\varphi})_s|^2ds }.\] 
It follows that 
\begin{equation}
\label{eq: estimate from strong monotonicity mfgmp}
    c_\kappa\esp{\int_{T-t}^T |\Delta (X,U,q,Z^{\varphi})_s|^2ds}\leq \|A\|\|q^1_0-q^2_0\|^2+\|X^1_0-X^2_0\|\|U^1_0-U^2_0\|.
\end{equation}
Using the regularity of coefficients as well as the bound we already have on $\|\nabla_q\varphi\|_\infty$, we deduce that 
\[\forall s\in [T-t,T], \quad \|(X^1_s,q^1_s)-(X^2_s,q^2_s)\|^2\leq \left(1+C_{coef}+\omega\left(\sqrt{\frac{\|A\|}{\beta(T-s)}}\right)\right)\esp{\int_{T-t}^s |\Delta (X,U,q,Z^\varphi)_u|^2 du},\]
which in turns implies 
\[\|(X^1_{T-t},q^1_{T-t})-(X^2_{T-t},q^2_{T-t})\|^2\leq C\left(\|q^1_0-q^2_0\|^2+\|X^1_0-X^2_0\|\|U^1_0-U^2_0\|\right),\]
for some constant $C$ independent of $t\in [0,T]$.
We may now estimate the difference of the backward processes in a similar fashion and conclude that 
\[\| U_0^1-U_0^2\|^2\leq C'(\|q^1_0-q^2_0\|^2+\|X^1_0-X^2_0\|^2),\]
for another constant independent of $t$. It follows that the pair $(U,\varphi)$ is lift Lipschitz, that is for any $t<T_c$, the function 
\[(X,q)\mapsto (U,\varphi)(t,X,q,\mathcal{L}(X)),\]
is Lipschitz for the Hilbertian norm $\|\cdot\|$. Then since we have the estimate \eqref{eq: estimate from strong monotonicity mfgmp} on the stability of $Z^\varphi$ with respect to initial conditions, we may conclude that $U$ is also Lipschitz on $\mathcal{P}_2(\reels^d)$ with a constant independent of $t\in [0,T]$ by going back to the system \eqref{eq: caracteristics in x fbsde with major} as we did in the proof of Lemma \ref{lemma: local lipschitz estimate}. 

Since the Lipschitz estimates we presented are valid on $[0,T_c)$ with an uniform constant, this contradict 
\[\limsup_{t\to T_c}\|(U,\varphi)(t,\cdot)\|_{Lip}=+\infty. \]
and we deduce that $T_c=T$. Finally since this estimate is valid up until $T$ included, naturally
\[\limsup_{t\to T}\|(U,\varphi)(t,\cdot)\|_{Lip}<+\infty, \]
and a Lipschitz solution can be constructed on the closed interval $[0,T]$ by Theorem \ref{thm: existence mfgmp}.
\end{proof}
\begin{remarque}
    Although we do not treat the case of a dynamics with an additive common noise $(B^0_t)_{t\geq 0}$ directly, there is no particular difficulty in extending our result to this setting. Indeed, under Hypothesis \ref{hyp: non degeneracy}, the coefficients still satisfy Hypotheses \ref{hyp: coefficients mfgmp} and \ref{hyp: monotonicity mfgmp} under the change of variables introduced in Section \ref{subsection: additive common noise}, uniformly in the new variable $q^0$. The above proof can be adapted by taking the same initial condition \[q^{0,1}_0=q^{0,2}_0=q^0_0,\]
    when comparing two solutions with initial conditions $(X^i,q^i,q^{0,i})$ for $i=1,2$. Since
    \[\forall t\geq 0, \quad q^{0,1}_t=q^{0,2}_t \quad a.s,\]
    for any such initial condition, the only difference is that $\sigma^0_T$ now also depends on $\left\|\left(I_d-\Gamma_{cor}\Gamma_{cor}^T\right)^{-1}\right\|$ for $\Gamma_{cor}$ the correlation matrix between the two common noises.
\end{remarque}
In the above proof, $\sigma^0_T$ depends on $T$ only through the function $t\mapsto \beta(t)$ such that 
\begin{gather}
\nonumber\forall t\in [0,T],\quad  \forall (X,q),(Y,q')\in \mathcal{H}^{d}\times \reels^{d^0},\\
\label{eq: monotonicity with beta t} \langle U(t,X,q,\mathcal{L}(X))-U(t,Y,q',\mathcal{L}(Y)),X-Y\rangle+ \frac{1}{2}(q-q')\cdot A(q-q')\\
\nonumber \geq \beta(t) \|\varphi(t,q,\mathcal{L}(X))-\varphi(t,q',\mathcal{L}(Y))\|^2.
\end{gather}
Consequently, if we can find a constant function $\beta$ independent of $T$ such that \eqref{eq: monotonicity with beta t} holds on $[0,T]$, then $\sigma^0_T$ can also be chosen independently of $T$. 
\begin{thm}
    \label{thm sigma independent of T}
Under Hypotheses \ref{hyp: coefficients mfgmp} and \ref{hyp: monotonicity mfgmp}, if either 
\begin{enumerate}
\item[-] the Hamiltonian is such that $\lambda\geq \frac{\gamma^*}{2}$,
\item[-] the Hamiltonian is such that $\lambda\geq 0$ and $\delta<1$,
\end{enumerate}
then there exists a $\sigma^0_*$ depending on $\kappa,C_{coef},\omega,\beta_0,C_H,C_M,\delta,\lambda$ but independent of a time horizon, such that for any $\sigma^0>\sigma^0_*$ and for any $T>0$ there exists a unique Lipschitz solution to \linebreak   MFGMP$(T,\psi,g,D_z H^0,L_{major},F,G)$ on $[0,T]$
\end{thm}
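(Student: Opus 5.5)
The plan is to rerun the proof of Theorem \ref{thm: existence for sigma T mfgmp}, using the remark just before the statement. There, $\sigma^0_T$ depends on $T$ only through the lower bound $\beta(t)$ in \eqref{eq: monotonicity with beta t}. It therefore suffices to find a weight $\beta$ with $\inf_{t\geq 0}\beta(t)\geq \underline{\beta}>0$, where $\underline{\beta}$ is independent of $T$. Such a weight gives the uniform bounds
\[\|\nabla_q\varphi(t,\cdot)\|_\infty\leq \sqrt{\|A\|/\underline{\beta}}, \qquad \sup_t |Z^{\varphi}_s|\leq \sqrt{\|A\|/\underline{\beta}},\]
the second one coming from Lemma \ref{lemma: Z bound by lip}. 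We then define
\[\sigma^0_*=\frac{1}{\gamma}\omega^2\Bigl(\sqrt{\|A\|/\underline{\beta}}\Bigr)+\frac{1}{\underline{\beta}}\Bigl(C_M+K\bigl(\sqrt{\|A\|/\underline{\beta}}\bigr)\Bigr).\]
For $\sigma^0>\sigma^0_*$, Steps 1--3 of that proof go through verbatim on any $[0,T]$: monotonicity of $V$ propagates, the Lipschitz bounds are uniform, and Theorem \ref{thm: existence mfgmp} excludes blow-up. Uniqueness follows from Theorem \ref{lemma: uniqueness lip sol}.

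\emph{First case, $\lambda\geq\gamma^*/2$.} In Step 1 the weight was $\beta(s)=\beta_0e^{(2\lambda-\gamma)s}$, with $\gamma=\gamma^*=\frac{2}{\kappa}C_H^2(\|A\|+\beta_0)$ chosen independently of the solution. When $2\lambda-\gamma^*\geq 0$ the weight is nondecreasing. I would rather take the constant weight $\beta\equiv\beta_0$. In the Itô computation of $\esp{V}$ the $|\varphi^1-\varphi^2|^2$ term then has coefficient $\beta'-(2\lambda-\gamma^*)\beta=-(2\lambda-\gamma^*)\beta_0\leq 0$, so it is discarded. The $|\Delta(X,U,q)|^2$ term keeps coefficient $\frac{1}{\gamma^*}\beta_0 C_H^2(\|A\|/\beta_0+1)-\kappa\leq -\kappa/2$. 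The $|\Delta Z^\varphi|^2$ coefficient is negative once $\sigma^0>\sigma^0_*$ with $\underline{\beta}=\beta_0$. Hence $\esp{V_{T-t}}\geq 0$ on every short interval, and the induction of Step 2 runs with the same constant.

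\emph{Second case, $\lambda\geq0$, $\delta<1$.} This is the main obstacle, since $\gamma$ is no longer dominated by $2\lambda$. I would use the sublinear growth to split the cross term $2\beta\,\Delta L_H\,\Delta\varphi$ differently. By Young's inequality with an arbitrary $\epsilon$,
\[\beta C_H(1+|Z|^\delta)|\Delta\bar X||\Delta\varphi|\leq \epsilon\kappa|\Delta\bar X|^2+\frac{\beta^2C_H^2(1+|Z|^{\delta})^2}{4\epsilon\kappa}|\Delta\varphi|^2.\]
The $|\Delta\varphi|^2$ term must now be absorbed without any discount. For this I would use the a priori bound $|\Delta\varphi|^2\leq \beta^{-1}(\|A\||\Delta q|^2+|\Delta X||\Delta U|)$, which follows from monotonicity at the current time (an induction hypothesis), together with $|Z|\lesssim \beta^{-1/2}$. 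Under this bound the cost is of order $\beta\,(1+\beta^{-\delta/2})^2\,|\Delta\bar X|^2$. For small $\beta$ this behaves like $\beta^{1-\delta}|\Delta\bar X|^2$, and since $\delta<1$ it tends to $0$. I therefore choose a constant $\underline{\beta}\leq\beta_0$ small enough, depending on $\kappa,C_H,\|A\|,\delta$, that this term is at most $\frac{\kappa}{2}|\Delta\bar X|^2$. Using $\underline{\beta}$ as the constant weight, the condition $\beta(0)\leq\beta_0$ needed for $V_T\geq0$ still holds. The remaining coefficients are controlled as in the first case.

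The delicate point is circularity: the bound on $|\Delta\varphi|$ uses the monotonicity one is trying to propagate. I would handle it through the bootstrap already used in Step 1 of Theorem \ref{thm: existence for sigma T mfgmp}. On a short interval, Lemma \ref{lemma: local lipschitz estimate} gives continuity of the Lipschitz constants. Propagation at level $\underline{\beta}$ on $[0,t]$ then yields the bounds needed on $[t,t+t^*]$, and the threshold $\sigma^0_*$ never changes. This closes the argument and gives the contradiction with blow-up for every $T$.
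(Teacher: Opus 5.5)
Your overall plan matches the paper's. You use a time-independent weight so that $\|\nabla_q\varphi\|_\infty\le\sqrt{\|A\|/\underline\beta}$, and hence the $Z^\varphi$-bound of Lemma~\ref{lemma: Z bound by lip} and the threshold, do not depend on $T$; you then rerun Theorem~\ref{thm: existence for sigma T mfgmp}. Your first case is exactly the paper's argument with $\beta\equiv\beta_0$.

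For $\delta<1$ you take a different route. The paper splits into two sub-cases:
\begin{itemize}
\item For $\lambda>0$ it applies Young's inequality with $\gamma=\lambda$, so the resulting $\gamma\beta|\Delta\varphi|^2$ is absorbed by the discount. It then only takes $\beta(0)$ small, using $\beta(0)^{1-\delta}\to0$, to make the $\Delta(X,U,q)$-coefficient negative.
\item Only for $\lambda=0$ does it use the a priori bound $\beta\|\Delta\varphi\|^2\le\|\Delta U\|\|\Delta X\|+\|A\|\|\Delta q\|^2$.
\end{itemize}
You use this last mechanism for every $\lambda\ge0$. Your version is uniform in $\lambda$; the paper's $\lambda>0$ constants degenerate as $\lambda\downarrow0$, which is why it needs a separate case. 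On the other hand, the paper's $\lambda>0$ argument needs as a priori input on a short interval only a bound on $|Z^\varphi|$, i.e.\ on Lipschitz constants, so the bootstrap of Step 1 applies verbatim.

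That is precisely where your argument is incomplete. Your refined estimate on $[t,t+t^*]$ needs, at every intermediate time $s$, the monotonicity inequality itself:
$\underline\beta|\Delta\varphi_s|^2\le\mathbb{E}[\Delta U_s\cdot\Delta X_s|\mathcal{F}^0_s]+\frac12\Delta q_s\cdot A\Delta q_s$.
It must be read conditionally on $\mathcal{F}^0_s$; pointwise, as you wrote it, it is false.

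Lemma~\ref{lemma: local lipschitz estimate} only gives continuity of the Lipschitz constants. This controls $|Z^\varphi|$ but says nothing about the inequality for $s>t$. Using a Lipschitz bound on $\varphi$ instead would bring in $\|U(s,\cdot)\|_{Lip}$, whose bounds themselves depend on $\underline\beta$, so the smallness of $\underline\beta^{1-\delta}$ no longer suffices.

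One way to close the loop:
\begin{itemize}
\item Starting from level $\underline\beta$ at time $t$, run Step 1 of Theorem~\ref{thm: existence for sigma T mfgmp} on a short interval, with weight $\underline\beta e^{(2\lambda-\gamma)(s-t)}$ and $\gamma$ large. This gives the inequality at level at least $\underline\beta/2$ at all intermediate times.
\item Then run your refined estimate with this degraded bound, having chosen $\underline\beta$ small enough to absorb the factor $2$. This returns level $\underline\beta$ on the whole interval.
\item A connectedness argument in $t$ then covers $[0,T_c)$.
\end{itemize}

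Similarly, when $\lambda=0$ the part $\omega(\cdot)|\Delta Z^\varphi||\Delta\varphi|$ of the cross term cannot be handled ``as in the first case''. There is no discount to absorb the $\gamma\underline\beta|\Delta\varphi|^2$ it produces, so it must also go through the a priori bound. This forces $\gamma\lesssim\kappa/\max(1,\|A\|)$ in your formula for $\sigma^0_*$, which is still independent of $T$.

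The paper's own $\lambda=0$ case is equally brief on this point, deferring to the proof of Theorem~\ref{thm: existence for sigma T mfgmp}.
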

\begin{proof}
First if $\lambda\geq \frac{\gamma^*}{2}$ then following the proof of Theorem \ref{thm: existence for sigma T mfgmp}, we may take $\beta\equiv \beta_0$ independent of time in which case, for any $\sigma^0>\sigma^0_*$ defined by
\[\sigma^0_*=\frac{1}{2\lambda }\omega^2\left(\sqrt{\frac{\|A\|}{\beta_0}}\right)+\frac{1}{\beta_0}\left(C_M+K\left(\sqrt{\frac{\|A\|}{\beta_0}}\right)\right),\]
there exists a solution to MFGMP$(T,\psi,g,D_z H^0,L_{major},F,G)$ on $[0,T]$ for any $T>0$.

On the other hand, if we assume that $\delta<1$ and that $\lambda>0$, we fix $\gamma=\lambda$ and observe that
\[ \frac{1}{2\lambda}\beta(0)\left(C^2_H(1+\left|\sqrt{\frac{\|A\|}{\beta(0)}}\right|^{2\delta}\right)-\kappa \leq \frac{C^2_H}{2\lambda}(\beta(0)+\beta(0)^{1-\delta}\sqrt{\|A\|})-\kappa. \]
In particular there exists a $\beta(0)=\beta_\kappa$ sufficiently small such that 
\[ \frac{1}{4\lambda }\beta_\kappa \left(C^2_H(1+\left|\sqrt{\frac{\|A\|}{\beta_\kappa }}\right|^{2\delta}\right)\leq \frac{\kappa}{2}.\]
Letting $\beta^*=\min(\beta_\kappa,\beta_0)$, we deduce that for any $\sigma^0>\sigma^0_*$ defined by 
\[\sigma^0_*=\frac{1}{2\lambda }\omega^2\left(\sqrt{\frac{\|A\|}{\beta^*}}\right)+\frac{1}{\beta^*}\left(C_M+K\left(\sqrt{\frac{\|A\|}{\beta^*}}\right)\right),\]
there exists a Lipschitz solution on $[0,T]$ for any $T>0$.

Finally we treat the case $\lambda=0$. Let us first assume that there exists a constant function $\beta(t)\equiv \beta$ and a time horizon $T_\beta$ such that there exists a Lipschitz solution to MFGMP$(T,\psi,g,D_z H^0,L_{major},F,G)$ on $[0,T_\beta]$ with
\[\forall t\in [T-T_\beta,T], \quad \esp{V_t}\geq 0,\]
where $(V_t)_{t\in [T-T_\beta,T]}$ is defined as in \eqref{eq: prop monotonicity mfgmp} for any set of initial conditions. Fixing two such initial conditions $(X^1,q^1), (X^2,q^2)$ and defining the associated processes as in the proof of Theorem \ref{thm: existence for sigma T mfgmp}, by definition of $(V_t)_{t\in [T-T_\beta,T]}$, 
\[\forall t\in [T-T_\beta,T], \quad \beta(t)\|\varphi^1_t-\varphi^2_t\|^2\leq \|U^1_t-U^2_t\|\|X^1_t-X^2_t\|+\|A\|\|q^1_t-q^2_t\|^2.\]
From this estimate, it follows that for any $\beta'\leq \beta$
\begin{gather*}
2\!\beta' \esp{\int_{T-t}^T \!(L^1_{H_s}\!-\!L^2_{H_s})(\varphi^1_s\!-\!\varphi^2_s)ds}\\
\leq 2\sqrt{\beta'} \displaystyle \int_{T-t}^T\|L^1_{H_s}-L^2_{H_s}\|\left(\|U^1_s-U^2_s\|+\|X^1_s-X^2_s\|+\sqrt{\|A\|}\|q^1_s-q^2_s\|\right)ds\\
\leq 2\sqrt{\beta'}\displaystyle \int_{T-t}^T\left(\!\underbrace{C_H\left(2+\!\frac{\|A\|^\frac{\delta}{2}}{\sqrt{\beta'}^\delta}\right)\|\Delta(X,U,q)_s\|^2}_I\!+\!\underbrace{\left(1+2\omega^2\left(\frac{\|A\|^\frac{\delta}{2}}{\sqrt{\beta'}^\delta}\right)\right)\|Z^{\varphi,1}_s-Z^{\varphi,2}_s\|^2}_{II}\right)ds.
\end{gather*}
For $\beta'$ sufficiently small, the term $I$ can be controlled by the strong monotonicity of $(F,G,AD_zH^0)$, and $II$ by $\sigma^0$ provided it is sufficiently large before said $\beta'$, as in the previous case. Then this a priori estimate is made rigorous by following the proof of Theorem \ref{thm: existence for sigma T mfgmp}.
\end{proof}
\begin{remarque}
    Clearly, in both cases, this corollary relies on the fact that under those conditions $\|\nabla_q\varphi(t,\cdot)\|_\infty$ is bounded uniformly in time. The first condition 
    \[\lambda>\frac{\gamma^*}{2},\]
    is less interesting in our opinion since it implies that we can take 
    \[\beta(t)=e^{(2\lambda-\gamma^*)t}.\]
    Letting $(U^T,\varphi^T)$ be the unique solution to MFGMP$(T,\psi,g,D_z H^0,L_{major},F,G)$ on $[0,T]$, we deduce that 
    \[\lim_{T\to \infty}\|\nabla_p\varphi^T(0,\cdot)\|_{\infty}=0.\]
    In optimal control, the presence of a term $\lambda \varphi$ in the Hamiltonian corresponds to a control problem with discounting.
    In this case the discount is so significant that the major player is not interested in optimizing over long intervals of time. 
    This does not seem to be the case with the second condition, since $\beta$ must stay sufficiently small. Since we have presented estimates independent of $\lambda\geq 0$ it also becomes possible to study the ergodic limit of the system in this case, although we leave this extension to future studies. 
    \end{remarque}
    
    \begin{exemple}
The second condition holds for any Hamiltoninan of the form 
    \begin{equation}
        \label{eq: exemple of hamiltonian}
        H_{major}(q,\varphi,\mu,z)=\lambda \varphi+b(q,\mu)\cdot z+c(q,\mu)+f(z),
    \end{equation}
    for Lipschitz functions $b,c$ a $\lambda \geq 0$ and a locally Lipschitz function $f$. Although the growth condition
    \[\|L_H(\cdot,z)\|_{Lip}\leq C(1+|z|^\delta),\]
    for some $\delta<1$ is quite restrictive, it is still possible to have a non linear interaction term $h$ depending on $(q,\mu)$. For example it is still satisfied if we add a term of the form 
    \[h(q,\mu,z)=f(q,\mu)(1+|z|)^\eta,\]
    with a Lipschitz function $f$ and $\eta\in (0,1)$.
    Moreover, if the Hamiltonian is of the form \eqref{eq: exemple of hamiltonian}, and the triple $(F,G,b)$ satisfies \eqref{eq: monotonicity on coefficients}, then Hypothesis \ref{hyp: monotonicity mfgmp} holds for this Hamiltonian. We refer to \cite{common-noise-in-MFG} for examples of triples $(F,G,b)$ satisfying this monotonicity condition. Finally, let us also mention that this corollary can be adapted to the case $\lambda=0$ and $\delta=1$ although this leads to a very strong monotonicity condition. Indeed, in addition to $\sigma^0_*$ there exists a $\kappa_*$ such that \eqref{eq: monotonicity on coefficients} must hold with $\kappa>\kappa^0$. Since we do not expect this condition to be often satisfied, we do not delve too much into it. 
        \end{exemple}
Let us end this section by insisting that it is unclear to us whether further regularity of the solution $(U,\varphi)$ follows naturally in this setting, even should coefficient be smooth. Consequently, the framework of Lipschitz solutions is not just convenient but essential in our approach. 
\subsection{Application to mean field games}
We just presented a general result on the existence of a Lipschitz solution associated to FBSDEs of the form \eqref{mfg with major}. We now explain how it applies to mean field games, going back to our original motivation of solving \eqref{eq: master system}. Clearly, for mean field games without dependence in law on the controls, the above result can be applied directly with $(g,F,G)=(\nabla_x u(T,\cdot),D_p H,-D_xH)$ for $H$ the Hamiltonian of minor players. In light of the analysis carried out in \ref{subsection: Nash}, the case of mean field games of controls is more technical and requires an additional assumption, namely the invertibility of the Hilbertian lift of $\nabla_\alpha L$ which is equivalent to the wellposedness of the fixed point mapping formulation of MFG of controls \cite{extragradient-in-mfg,jackson2025quantitativeconvergencedisplacementmonotone}. We now make the following assumption
\begin{hyp}
    \label{hyp: motonicity mfgmp lagrangian minor}
The Lagrangian of minor players $(x,q,a,\mu)\mapsto L(x,q,a,\mu)$ is such that 
\begin{enumerate}
    \item[-] $\nabla_x L$ is Lipschitz in all arguments
    \item[-] $(x,q,\mu)\mapsto \nabla_a L(x,q,a,\mu)$ is Lipschitz uniformly in $a\in \reels^d$ 
\end{enumerate}
Moreover there exists a symmetric matrix $A\in \mathcal{M}_{d^0}(\reels)$ and constants $\beta_0,c_L>0$ 
\begin{enumerate}
    \item[-] the inequality \eqref{eq: monotonicity on initial condition} holds for $g=\nabla_xu(T,\cdot)$ and $\psi$ for the matrix $A$ and with $\beta_0$
    \item[-]  \begin{gather}
   \nonumber \forall (X,\alpha,q),(Y,\alpha',q')\in \mathcal{H}^{2d}\times \reels^{m}, \quad z\in \reels^{d^0}\\
\label{eq: ineq monotonicity mfgmp lagrangian minor}\langle \Delta (\nabla_x \tilde{L},\nabla_\alpha \tilde{L},AD_z\tilde{H}^0),\Delta \bar{X}\rangle\geq c_L \|\Delta \bar{X}\|^2 
\end{gather}
where 
\[
  \Delta \bar{X}=\left(\begin{array}{c}
X-Y\\
\alpha-\alpha'\\
q-q'
\end{array}\right). \]
and 
\[\Delta (\nabla_x \tilde{L},\nabla_\alpha \tilde{L},AD_z\tilde{H}^0)=\left(\begin{array}{c}
\nabla_x L(X,\alpha,q,\mathcal{L}(X,\alpha))-\nabla_x L(Y,\alpha',q',z,\mathcal{L}(Y,\alpha'))\\
\nabla_\alpha L(X,\alpha,q,\mathcal{L}(X,\alpha))-\nabla_\alpha L(Y,\alpha',q',z,\mathcal{L}(Y,\alpha'))\\
AD_z H^0(q,z,\mathcal{L}(X,\alpha))-AD_zH^0(q',z,\mathcal{L}(Y,\alpha'))
\end{array}\right),\]
\end{enumerate}
\end{hyp}
This monotonicity condition is formulated directly at the level of the Lagrangian of minor players $L$, which is natural in mean field games of controls. 
\begin{lemma}
    \label{lemma: main result existence for mfg}
Under Hypothesis \ref{hyp: motonicity mfgmp lagrangian minor}, the Hilbertian lift 
\[\alpha \mapsto \nabla_\alpha L(X,q,\alpha,\mathcal{L}(X,\alpha)),\]
has a well defined Lispchitz inverse \[(X,q,\alpha)\mapsto \nabla_\alpha \tilde{L}^{-1}(X,q,\alpha).\]
Moreover there exists a Lipschitz function \[L_{inv}:\reels^d\times \reels^{d^0}\times \reels^d\times\mathcal{P}_2(\reels^{2d})\to \reels^d,\]
such that 
\[\forall (X,\alpha,q)\in \mathcal{H}^{2d}\times \reels^{d^0},\quad \nabla_\alpha \tilde{L}^{-1}(X,q,\alpha)=L_{inv}(X,q,\alpha,\mathcal{L}(X,\alpha)).\]
Finally, the functions 
\[
\begin{array}{l}
F:(x,q,u,\mathcal{L}(X,U))\mapsto L_{inv}(x,q,u,\mathcal{L}(X,U)),\\
G:(x,q,u,\mathcal{L}(X,U))\mapsto \nabla_x L(x,q,L_{inv}(x,q,u,\mathcal{L}(X,U)),\mathcal{L}(X,L_{inv}(X,q,U,\mathcal{L}(X,U))))
\end{array}
\]
are Lipschitz in all arguments and letting
\[H^{0'}:(q,z,\mathcal{L}(X,U))\mapsto H^0(q,z,\mathcal{L}(X,\nabla_\alpha \tilde{L}^{-1}(X,q,U))),\]
the following monotonicity assumption holds for the triple $(F,G,H^{0'})$
\begin{gather}
   \nonumber \forall (X,U,q),(Y,V,q')\in \mathcal{H}^{2d}\times \reels^{m}, \quad z\in \reels^{d^0}\\
\label{eq: ineq monotonicity coef mfg control with major}\langle \Delta (\tilde{G},\tilde{F},AD_z\tilde{H}^{',0}),\Delta \bar{X}\rangle\\
\nonumber \geq c_L \left(\|X-Y\|^2+|q-q'|^2+\|\Delta \tilde{F}\|^2\right)
\end{gather}
\end{lemma}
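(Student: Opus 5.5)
The plan has three steps: strong monotonicity of the $\alpha$-component gives invertibility, a structural argument gives the representation $L_{inv}$, and a change of variables $U=\nabla_\alpha\tilde L(X,q,\alpha)$ transfers \eqref{eq: ineq monotonicity mfgmp lagrangian minor} to $(F,G,H^{0'})$.

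\textbf{Step 1: invertibility of the lift.} Fix $(X,q)$ and take $Y=X$, $q'=q$ in \eqref{eq: ineq monotonicity mfgmp lagrangian minor}. Only the $\alpha$-component survives, giving
\[\langle \nabla_\alpha L(X,\alpha,q,\mathcal{L}(X,\alpha))-\nabla_\alpha L(X,\alpha',q,\mathcal{L}(X,\alpha')),\alpha-\alpha'\rangle\geq c_L\|\alpha-\alpha'\|^2.\]
So $\alpha\mapsto\nabla_\alpha\tilde L(X,q,\alpha)$ is strongly monotone on $\mathcal{H}^d$. It is also Lipschitz, by the Lipschitz assumption on $\nabla_\alpha L$; I will take Lipschitz in $a$ and $\mu$ as part of the hypothesis. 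By Browder--Minty, or more concretely by the Banach fixed point applied to $\alpha\mapsto\alpha-\epsilon(\nabla_\alpha\tilde L(X,q,\alpha)-U)$ for small $\epsilon$, it is a bijection with inverse $\nabla_\alpha\tilde L^{-1}(X,q,\cdot)$ of Lipschitz constant $1/c_L$.

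Lipschitz dependence on $(X,q)$ follows from the standard perturbation estimate. Let $\alpha=\nabla_\alpha\tilde L^{-1}(X,q,U)$ and $\alpha'=\nabla_\alpha\tilde L^{-1}(Y,q',V)$. Then
\[c_L\|\alpha-\alpha'\|\le\|U-V\|+C(\|X-Y\|+|q-q'|),\]
where $C$ comes from the Lipschitz regularity of $\nabla_\alpha L$ in $(x,q,\mu)$ and from $\mathcal{W}_2(\mathcal{L}(X,\alpha),\mathcal{L}(Y,\alpha))\le\|X-Y\|$.

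\textbf{Step 2: the representation $L_{inv}$.} Set $\mu=\mathcal{L}(X,U)$ and $\alpha=\nabla_\alpha\tilde L^{-1}(X,q,U)$. By uniqueness of the fixed point, the law $\mathcal{L}(X,\alpha)$ depends only on $(q,\mu)$; this is the law-invariance argument used for lifts. Define $L_{inv}(x,q,u,\mu)$ as the unique $a\in\reels^d$ solving
\[\nabla_\alpha L(x,q,a,\mathcal{L}(X,\alpha))=u.\]
This is well defined because $a\mapsto\nabla_\alpha L(x,q,a,\nu)$ is strongly monotone with constant $c_L$: apply the inequality to $\alpha$ perturbed on small sets and let the sets shrink, or assume finite-dimensional strong convexity directly. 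Then $\alpha=L_{inv}(X,q,U,\mathcal{L}(X,U))$ almost surely.

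The Lipschitz property in $(x,q,u)$ comes from the finite-dimensional strong monotonicity. In $\mu$, compose with the map $\mu\mapsto\mathcal{L}(X,\alpha)$. That map is Lipschitz by Step 1, once one chooses optimal couplings $(X,U),(Y,V)$ realizing $\mathcal{W}_2$. Given this, $F=L_{inv}$ and $G$ are compositions of Lipschitz maps, hence Lipschitz.

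\textbf{Step 3: monotonicity of $(F,G,H^{0'})$.} Given $(X,U,q)$ and $(Y,V,q')$, set $\alpha=\tilde F(X,U,q)$ and $\alpha'=\tilde F(Y,V,q')$. Then $U=\nabla_\alpha\tilde L(X,q,\alpha)$ and $V=\nabla_\alpha\tilde L(Y,q',\alpha')$, while $\tilde G=\nabla_x\tilde L(X,\alpha,q)$ and $AD_z\tilde H^{0'}=AD_zH^0(q,z,\mathcal{L}(X,\alpha))$. The pairing in \eqref{eq: ineq monotonicity coef mfg control with major} should be read in the ordering that pairs $\Delta G$ with $\Delta X$, $\Delta F$ with $\Delta U$, and the $H^0$ term with $\Delta q$, as in Hypothesis \ref{hyp: monotonicity mfgmp}. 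With that reading,
\[\langle\Delta\tilde G,X-Y\rangle+\langle\Delta\tilde F,U-V\rangle+\langle\Delta AD_z\tilde H^{0'},q-q'\rangle
=\langle \Delta (\nabla_x \tilde{L},\nabla_\alpha \tilde{L},AD_z\tilde{H}^0),(X-Y,\alpha-\alpha',q-q')\rangle.\]
By \eqref{eq: ineq monotonicity mfgmp lagrangian minor} the right-hand side is at least $c_L(\|X-Y\|^2+\|\alpha-\alpha'\|^2+|q-q'|^2)$, which is exactly the claim since $\alpha-\alpha'=\Delta\tilde F$.

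The main obstacle is Step 2, passing from the Hilbertian inverse to a genuine function of $(x,q,u,\mu)$ on all of $\reels^d$ with Lipschitz control in $\mu$. The points off the support of the law need the finite-dimensional strong monotonicity, which I would extract from the $L^2$ one by localization; if that fails, I would follow \cite{monotone-sol-meynard} and set $L_{inv}=D_pH(\cdot,\mathcal{L}(X,\nabla_\alpha\tilde L^{-1}(X,q,U)))$.
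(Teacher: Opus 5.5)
Your argument is correct in substance and has the same skeleton as the paper's proof. Strong monotonicity of the $\alpha$-component yields a Lipschitz Hilbertian inverse, the inverse is written as a lift, and plugging $\alpha=\nabla_\alpha\tilde L^{-1}(X,q,U)$, $\alpha'=\nabla_\alpha\tilde L^{-1}(Y,q',V)$ into \eqref{eq: ineq monotonicity mfgmp lagrangian minor} gives \eqref{eq: ineq monotonicity coef mfg control with major}. Your Step 3 is exactly the paper's last step, and you correctly put $(Y,q',V)$ in the second slot where the paper's text has $(X,q,V)$.

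Where you genuinely differ is Step 2. The paper simply invokes Lemma 3.10 of \cite{monotone-sol-meynard} to represent the Hilbertian inverse as the lift of a Lipschitz $L_{inv}$. You instead construct $L_{inv}$ directly, as the pointwise inverse of $a\mapsto\nabla_a L(x,q,a,\nu(q,\mu))$ at the frozen law $\nu(q,\mu)=\mathcal{L}(X,\nabla_\alpha\tilde L^{-1}(X,q,U))$. This buys three things: the proof is self-contained, $L_{inv}$ is defined at every $(x,u)$ rather than only on supports, and you recover the formula $L_{inv}=D_pH(\cdot,\nu(q,\mu))$ that the paper only quotes in a remark.

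The localization you sketch does work. Take $E$ independent of $(X,\alpha)$ with $\mathbb{P}(E)=\epsilon$, and set $X_\epsilon=X\mathbf{1}_{E^c}+x\mathbf{1}_E$, $\alpha_\epsilon=\alpha\mathbf{1}_{E^c}+a\mathbf{1}_E$, $\alpha'_\epsilon=\alpha\mathbf{1}_{E^c}+b\mathbf{1}_E$. Apply the hypothesis with $Y=X_\epsilon$ and $q'=q$, divide by $\epsilon$, and let $\epsilon\to 0$ using the Lipschitz dependence of $\nabla_a L$ on $\mu$.

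There are two things to fix.

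First, drop the extra assumption that $\nabla_a L$ is Lipschitz in $a$. It is not part of Hypothesis \ref{hyp: motonicity mfgmp lagrangian minor}, and the paper deliberately avoids it: the discussion after the lemma notes it would restrict to Lagrangians with quadratic growth. You never need it:
\begin{itemize}
\item the $c_L^{-1}$ bound on the inverse comes from strong monotonicity alone;
\item your perturbation estimate in $(X,q)$ only uses Lipschitz regularity in $(x,q,\mu)$, uniformly in $a$;
\item surjectivity follows from Browder--Minty under mere hemicontinuity of the lift. The hypothesis, like the paper's own Step 1, tacitly treats the lift as a map $\mathcal{H}^d\to\mathcal{H}^d$.
\end{itemize}
The Banach iteration is the only place the extra assumption enters, so discard that route.

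Second, reorder Step 2. The claim that $\mathcal{L}(X,\alpha)$ depends only on $(q,\mathcal{L}(X,U))$ is not an immediate consequence of uniqueness of the Hilbertian fixed point, so establish the pointwise strong monotonicity first. Then, with $\nu=\mathcal{L}(X,\alpha)$ and $\Phi_\nu$ the pointwise inverse, you have $\alpha=\Phi_\nu(X,U)$. For any $(X',U')$ with the same law as $(X,U)$, the variable $\Phi_\nu(X',U')$ satisfies $\mathcal{L}(X',\Phi_\nu(X',U'))=\nu$ and solves the equation for $(X',U')$. By uniqueness it therefore equals $\nabla_\alpha\tilde L^{-1}(X',q,U')$. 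This gives law invariance and the representation $\alpha=L_{inv}(X,q,U,\mathcal{L}(X,U))$ at once, after which your optimal-coupling argument for Lipschitz dependence on $\mu$ goes through.
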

\begin{proof}
    Let us first observe that under Hypothesis \ref{hyp: motonicity mfgmp lagrangian minor}, there exist a constant $C_{Lip}$ depending only on $c_L$ and the Lipschitz constant of $(x,q,\mu)\mapsto \nabla_a L(x,q,a,\mu)$ such that
    \begin{gather*}
        \forall (X,Y,\alpha,\alpha',q,q')\in \mathcal{H}^{4d}\times \reels^{2m}\\
        C_{Lip}\left(\|X-Y\|^2+|q-q'|^2\right)+\langle \nabla_\alpha L(X,q,\alpha,\mathcal{L}(X,\alpha))-\nabla_\alpha L(Y,q',\alpha',\mathcal{L}(X',\alpha')),\alpha-\alpha'\rangle \geq \frac{c_L}{2}\|\alpha-\alpha'\|^2.
    \end{gather*}
    This implies that the function 
    \[\FuncDef{\mathcal{H}^d\times \reels^{d^0}\times \mathcal{H}^d}{\mathcal{H}^d}{(X,q,\alpha)}{\nabla_\alpha \tilde{L}^{-1}(X,q,\alpha)},\]
    is well defined and Lipschitz. Moreover by \cite{monotone-sol-meynard} Lemma 3.10, this Hilbertian inverse can be represented as the lift of a Lipschitz function $L_{inv}$. Finally evaluating the expression \eqref{eq: ineq monotonicity mfgmp lagrangian minor} for 
    \[(\alpha,\alpha')=\left(\nabla_\alpha \tilde{L}^{-1}(X,q,U),\nabla_\alpha \tilde{L}^{-1}(X,q,V)\right),\]
    for $U,V\in \mathcal{H}^d$ yields the joint monotonicity condition on $(F,G,H^0)$. 
\end{proof}
\begin{remarque}
    A direct consequence is that under Hypothesis \eqref{hyp: motonicity mfgmp lagrangian minor}, the assumption we made on the Lagrangian of minor players to make the link between Lispchitz solution and Nash equilibrium, Hypothesis \eqref{hyp: mfg with major optimality minor} holds for $L$.
\end{remarque}
The monotonicity condition obtained in this lemma is slightly different from \eqref{hyp: monotonicity mfgmp}. In fact, under the additional assumption that $\nabla_\alpha L$ is globally Lipschitz (meaning also in the controls), it is a straighforward extension to show that the monotonicity condition \eqref{hyp: monotonicity mfgmp} holds. However, this condition limit us to the study of Lagrangians with quadratic growth and is not required. Instead, let us remark that in this setting, the associated forward backward system writes
\begin{equation*}
\left\{
\begin{array}{l}
     \displaystyle X_t=X_0-\int_0^t  F_sds+\sqrt{2\sigma}B_t, \\
     \displaystyle q_t=q_0-\int_0^t D_zH^0(q_s,\varphi_s,Z^\varphi_s,\mathcal{L}(X_s,F_s|\mathcal{F}^0_s))ds+\sqrt{2\sigma^0}W^0_t,\\
    \displaystyle  U_t= g(X_T,q_T,\mathcal{L}(X_T|\mathcal{F}^0_T))+\int_t^T G(X_s,q_s,F_s,\mathcal{L}(X_s,F_s|\mathcal{F}^0_s))ds-\int_t^T
    Z_sd(B,W^0)_s,\\
\displaystyle \varphi_t= \psi(q_T,\mathcal{L}(X_T|\mathcal{F}^0_T))+\int_t^T L_{major}(q_s,\varphi_s,Z^\varphi_s,\mathcal{L}(X_s,F_s|\mathcal{F}^0_s))ds-\sqrt{2\sigma^0}\int_t^T Z^\varphi_sdW^0_s,\\
F_s=F(X_s,q_s,U_s,\mathcal{L}(X_s,U_s|\mathcal{F}^0_s)).
\end{array}
\right.
\end{equation*}
Since all coefficients depend on the backward process $(U_t)_{t\in [0,T]}$, only through $(F_t)_{t\in [0,T]}$, all results presented in Section 3.1 can be adapted under the monotonicity condition \eqref{eq: ineq monotonicity coef mfg control with major} for this particular class of systems. This concludes the demonstration that under Hypothesis \ref{hyp: motonicity mfgmp lagrangian minor}, the long time existence theory presented in Section 3.1 applies to MFGs of controls for which the coefficients are exactly the previously defined functions $(F,G,H')$. 

Finally, let us end this section by insisting that an analogue to Lemma \ref{lemma: main result existence for mfg} can be proved when the Lagrangian of minor players $L$ depends on $z$ so long as the Hamiltonian of the major player does not depend on the law of controls of minors. This shows that Theorem \ref{thm: existence for sigma T mfgmp} and its variants hold when minor players depend on the control of the major player under the equilibrium condition presented in Remark \ref{remarque: mfg with dependency on control of major}. Without presenting the result in full generality, let us give a simple example which fits into our framework
\begin{exemple}
The Hamiltonian of the major player is given by 
\[(q,z,\mathcal{L}(X))\mapsto b(q,\mathcal{L}(X))\cdot z+\frac{1}{2}|z|^2,\]
and the Lagrangian of minor players 
\[(x,q,z,a,\mathcal{L}(X,\alpha))\mapsto L(x,q,z,a,\mathcal{L}(X,\alpha)),\]
satisfies Hypothesis \ref{hyp: motonicity mfgmp lagrangian minor} uniformly in $z\in \reels^{d^0}$. In this setting, this last condition is equivalent to the strong monotonicity of $(\nabla_xL,\nabla_\alpha L,A b)$ uniformly in $z$ for some $A\in \mathcal{M}_{d^0}(\reels)$. Given a control $(\alpha^0_t)_{t\in [0,T]}$, the dynamics of the major player follows
\[dq_t=(b(q_t,\mu_t)+\alpha^0_t)dt+\sigma^0dW^0_t.\]
The optimal control for the major player at the Nash equilibrium is exactly given by the backward variable $(Z^\varphi_t)_{t\in [0,T]}$ in the associated stochastic formulation \eqref{mfg with major}. In particular the Lagrangian of minor players depends on $z$ directly through the control of the major player, which is natural in MFGs with a major player. 
\end{exemple}

\section{Decoupling algorithm}
In this section, we present a numerical method to solve the system \eqref{mfg with major}. By a decoupling algorithm, we here indicate a method which iterates on a sequence of decoupled FBSDEs to approximate the solution of the system \eqref{mfg with major}. A typical example consists in schemes based on Picard iterations, as in \cite{numerical-FBSDE-II}. The scheme we propose here is instead inspired by the recently developed extragradient methods in mean field games \cite{extragradient-in-mfg}. Most notably, it does not rely on a contraction in small time but instead uses the monotonicity inherent to the problem to build a solution directly on the whole interval $[0,T]$.
\subsection{On extragradient methods}
Let us briefly present the main idea behind extragradient methods \cite{extra-gradient,Nesterov}. This class of methods was introduced as a means of solving monotone variational inequalities of the form 
\begin{equation}
    \label{eq: varitional inequality extra gradient}
    \text{find }x^*\in \reels^d, \quad \forall x\in \reels^d, \quad v(x^*)\cdot (x-x^*)\geq 0,
\end{equation}
for continuous and monotone functions $v:\reels^d\to \reels^d$. Starting from an initial point $x_0$, the key change from standard gradient methods is to first do a trial step 
\[x_{n+\frac{1}{2}}=x_n-\gamma v(x_n),\]
before updating the direction of descent using the extrapolated point 
\[x_{n+1}=x_n-\gamma v(x_{n+\frac{1}{2}}),\]
hence the terminology of extragradient methods. If the function $v$ is monotone and Lipschitz, it is proven in \cite{extra-gradient} that the sequence $(x_n)_{n\in \mathbb{N}}$ converges to a solution $x^*$ of \eqref{eq: varitional inequality extra gradient} for step sizes $\gamma$ sufficiently small. 

In \cite{extragradient-in-mfg} we showed that under monotonicity assumptions, some mean field games and a class of FBSDEs can be equivalently represented as solutions to variational inequalities on the Hilbert spaces of square integrable random processes. These results do not extend directly to the system \eqref{mfg with major} we consider in this paper. However, since the wellposedness theory we presented in the previous section is also based on a notion of monotonicity, a natural question is whether \eqref{mfg with major} can also be written as the solution of a variational inequality. We show that this is indeed the case, allowing us to write a natural algorithm converging to the solution of \eqref{mfg with major}.
\subsection{Construction and convergence of a decoupling algorithm}
In this section, Hypotheses \ref{hyp: coefficients mfgmp} and \ref{hyp: monotonicity mfgmp} are in force. We assume that we are in the conditions of application of Theorem \ref{thm: existence mfgmp}, that is to say $\sigma^0>\sigma^0_T$. Consequently, there exists a unique Lipschitz solution of MFGMP$(T,\psi,g,D_zH^0,L_{major},F,G)$ on $[0,T]$. Before presenting our method, let us insist that coefficients may have superlinear growth in the backward variable $Z^\varphi$. This must be taken into account to ensure the wellposedness of numerical methods. Let us remark that by the existence of a Lipschitz solution, and previous estimates, we already now that there exists a constant $M$ such that for the solution $(X_t,q_t,U_t,\varphi_t,Z_t,Z^\varphi_t)_{t\in [0,T]}$ of \eqref{mfg with major} the following holds, independently of the initial condition $(X_0,q_0)$
\[  |Z^\varphi_t|\leq M \quad a.e \text{ for } t\in [0,T] \quad a.s. \]
In particular this means that $(X_t,q_t,U_t,\varphi_t,Z_t,Z^\varphi_t)_{t\in [0,T]}$ is also a strong solution to 
\begin{equation}
\label{eq: mfg with major lipschitz coefficients}
\left\{
\begin{array}{l}
     \displaystyle X_t=X_0-\int_0^t  F^M(X_s,q_s,U_s,Z^\varphi_s,\mathcal{L}(X_s,U_s|\mathcal{F}^0_s))ds+\sqrt{2\sigma}B_t, \\
     \displaystyle q_t=q_0-\int_0^t D_zH^M(q_s,Z^\varphi_s,\mathcal{L}(X_s,U_s|\mathcal{F}^0_s))ds+\sqrt{2\sigma^0}W^0_t,\\
    \displaystyle  U_t= U_T+\int_t^T G^M(X_s,q_s,U_s,Z^\varphi_s,\mathcal{L}(X_s,U_s|\mathcal{F}^0_s))ds-\int_t^T
    Z_sd(B,W^0)_s,\\
\displaystyle \varphi_t= \varphi_T+\int_t^T L^M(q_s,\varphi_s,Z^\varphi_s,\mathcal{L}(X_s,U_s|\mathcal{F}^0_s))ds-\sqrt{2\sigma^0}\int_t^T Z^\varphi_sdW^0_s,\\
U_T=g(X_T,q_T,\mathcal{L}(X_T|\mathcal{F}^0_T)), \quad \varphi_T=\psi(q_T,\mathcal{L}(X_T|\mathcal{F}^0_T)),
\end{array}
\right.
\end{equation}
where $H^M$ is defined by 
\[H^M:(q,u,\varphi,z,\mu)\mapsto H_{major}(q,u,\varphi,z\wedge M\vee (-M),\mu),\]
and $F^M,G^M,L^M$ are defined in a similar fashion. Moreover, by Theorem \ref{lemma: uniqueness lip sol}, it is the only strong solution of \eqref{eq: mfg with major lipschitz coefficients}. In what follows, we focus on the system \eqref{eq: mfg with major lipschitz coefficients} which is much easier to work with since its coefficients are Lipschitz. Although this constant $M$ is not exogenous but very much dependent on the constants of the problem, and in particular $\sigma^0$, we ignore this dependency in our notation for the sake of clarity. We now introduce the coefficients $(F',H',G',L_H')$ defined by 
\[(F',H',G',L_H')(x,q^f,q^b,u,\varphi,z,\mu)=(F^M,H^M,G^M,L^M)(x,\frac{q^f+q^b}{2},u,\varphi,z,\mu).\]
The system \eqref{eq: mfg with major lipschitz coefficients} is equivalent to 
\begin{equation}
\label{eq: q forward backward}
\left\{
\begin{array}{l}
     \displaystyle X_t=X_0-\int_0^t  F'(X_s,q^f_s,,q^b_s,U_s,Z^\varphi_s,\mathcal{L}(X_s,U_s|\mathcal{F}^0_s))ds+\sqrt{2\sigma}B_t, \\
     \displaystyle q^f_t=q_0-\int_0^t D_zH'(q^f_s,q^b_s,Z^\varphi_s,\mathcal{L}(X_s,U_s|\mathcal{F}^0_s))ds+\sqrt{2\sigma^0}W^0_t,\\
     \displaystyle q^b_t=q^f_T+\int_0^t D_zH'(q^f_s,q^b_s,Z^\varphi_s,\mathcal{L}(X_s,U_s|\mathcal{F}^0_s))ds+Z^q_sdW^0_s,\\
    \displaystyle  U_t= U_T+\int_t^T G'(X_s,q^f_s,q^b_s,U_s,Z^\varphi_s,\mathcal{L}(X_s,U_s|\mathcal{F}^0_s))ds-\int_t^T
    Z_sd(B,W^0)_s,\\
\displaystyle \varphi_t= \varphi_T+\int_t^T L'_H(q^f_s,q^b_s,\varphi_s,Z^\varphi_s,\mathcal{L}(X_s,U_s|\mathcal{F}^0_s))ds-\sqrt{2\sigma^0}\int_t^T Z^\varphi_sdW^0_s,\\
U_T=g(X_T,q_T,\mathcal{L}(X_T|\mathcal{F}^0_T)), \quad \varphi_T=\psi(q_T,\mathcal{L}(X_T|\mathcal{F}^0_T)),
\end{array}
\right.
\end{equation}
Clearly, the function $(t,x,q,\mu)\mapsto (q,U(t,x,q,\mu),\varphi(t,q,\mu))$ is a Lipschitz solution associated to \eqref{eq: q forward backward}, and since coefficients are Lipschitz this shows by Theorem \ref{lemma: uniqueness lip sol} that the unique strong solution is indeed given for $(q^b_t)_{t\in [0,T]}\equiv (q^f_t)_{t\in [0,T]}$.

\begin{lemma}
    \label{lemma: coef invertible mfgmp}
    Under Hypotheses \ref{hyp: coefficients mfgmp} and \ref{hyp: monotonicity mfgmp}, for any $(X,z,p)\in \mathcal{H}^d\times \reels^{2dm}$, the function 
    \[(F',D_zH'):(U,q)\mapsto (F',D_zH')(X,U,p,q,z,\mathcal{L}(X,U)),\]
    has a Lipschitz inverse $\theta(X,p,z):\mathcal{H}^d\times \reels^{d^0}\to \mathcal{H}^{d+d^0}$, we denote by $\theta_F$ its first $d-$coordinates and by $\theta_H$ its last $m$ coordinates. 
\end{lemma}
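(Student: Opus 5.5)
The plan is to read the statement as an invertibility result for a strongly monotone Lipschitz operator on a Hilbert space. I will then conclude with the Browder--Minty theorem, or equivalently a Banach fixed point for a resolvent-type contraction. Fix $(X,z,p)$ and consider the map
\[\Theta:(U,q)\in \mathcal{H}^d\times \reels^{d^0}\mapsto \big(F'(X,p,q,U,z,\mathcal{L}(X,U)),\,D_zH'(p,q,z,\mathcal{L}(X,U))\big).\]
Here $p$ plays the role of the frozen variable $q^b$ (or $q^f$), and $q$ is the free one. Because of the averaging $\frac{q^f+q^b}{2}$, the dependence on $q$ enters through $\frac{p+q}{2}$. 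Thanks to the truncation at level $M$, the coefficients $F^M,D_zH^M$ are globally Lipschitz. Hence $\Theta$ is Lipschitz on the Hilbert space $\mathcal{H}^d\times\reels^{d^0}$, with a constant $C_M$ depending only on $C_{coef},\omega(M)$.

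The key step is to show that $\Theta$, suitably twisted by $A$, is strongly monotone in $(U,q)$. I will apply \eqref{eq: monotonicity on coefficients} of Hypothesis \ref{hyp: monotonicity mfgmp} with the same $X=Y$ and the same $z$ (truncated, so it is legitimate). The $\tilde G$ component then pairs against $X-Y=0$ and drops out, which leaves
\[\langle \Delta \tilde F, U-V\rangle + \tfrac12 (q-q')\cdot A\big(D_zH^0(\tfrac{p+q}{2})-D_zH^0(\tfrac{p+q'}{2})\big)\geq \kappa\big(\|U-V\|^2+\tfrac14|q-q'|^2\big),\]
after the rescaling induced by the midpoint. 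So the operator $\mathcal{A}(U,q)=(F',\,A D_zH')$ is $\kappa/4$-strongly monotone and Lipschitz. It is therefore a bijection by the standard argument: for small $\gamma$, the map $Y\mapsto Y-\gamma(\mathcal{A}Y-w)$ is a contraction with factor $\sqrt{1-2\gamma\kappa/4+\gamma^2C_M^2}<1$. Its inverse is Lipschitz with constant $4/\kappa$, since strong monotonicity gives $\|\mathcal{A}Y-\mathcal{A}Y'\|\geq \frac{\kappa}{4}\|Y-Y'\|$.

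To pass from $\mathcal{A}$ back to $(F',D_zH')$ itself, I need $A$ to be invertible. This follows from the monotonicity: taking $U=V$ forces $(q-q')\cdot A\Delta D_zH'\geq \kappa|q-q'|^2$, so $A\,\partial_q D_zH'$ is coercive and $A$ is nonsingular. Then $\theta=\mathcal{A}^{-1}\circ\mathrm{diag}(I,A)$ is Lipschitz. Lipschitz dependence of $\theta$ on the parameters $(X,p,z)$ follows from the usual perturbation estimate: if $\mathcal{A}_1Y_1=\mathcal{A}_2Y_2=w$, then $\frac{\kappa}{4}\|Y_1-Y_2\|\leq\|\mathcal{A}_1Y_2-\mathcal{A}_2Y_2\|\leq C_M\|(X_1,p_1,z_1)-(X_2,p_2,z_2)\|$. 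Finally, $\theta_F,\theta_H$ are defined as the coordinate projections.

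The main obstacle is the careful bookkeeping of the monotonicity. Hypothesis \ref{hyp: monotonicity mfgmp} is stated with $q$ entering both $F$ and $D_zH^0$ and with $A$ acting on the $H$ component, so I have to check that freezing $X$ and splitting $q$ into forward and backward parts leaves a genuinely coercive term in both $U$ and $q$. A further point is that the measure argument $\mathcal{L}(X,U)$ is handled through the lifted (Hilbertian) formulation, which the inequality already accounts for. If the midpoint substitution degrades the constant, I would instead invert in the pair $(U,\frac{p+q}{2})$ directly and recover $q$ affinely.
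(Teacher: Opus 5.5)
Your argument is correct and follows the same route as the paper. You freeze $X$ and $z$, apply the coefficient monotonicity with $X=Y$ so that the $\tilde G$ term drops, and read off strong monotonicity in $(U,q)$, with the factor $\tfrac14$ coming from the midpoint $\tfrac{p+q}{2}$. From this you obtain a Lipschitz inverse, and the dependence on $(X,p,z)$ follows from the Lipschitz continuity of $F',D_zH'$.

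The one real difference is how $A$ is handled. The paper's displayed inequality pairs $\Delta D_zH'$ directly with $\tfrac{q^1-q^2}{2}$ and then inverts $(F',\tfrac12 D_zH')$. This silently drops the matrix $A$ of Hypothesis~\ref{hyp: monotonicity mfgmp}, and that step is only literally justified when $A$ is a positive multiple of the identity. You instead keep $A$ and invert the $A$-twisted operator, which means you need $A$ nonsingular to get back to $(F',D_zH')$. Your deduction of this from the case $U=V$ is valid, and it can be made derivative-free. If $w\in\ker A$ with $w\neq 0$, take $q-q'=w$. By symmetry of $A$ the left-hand side equals $\tfrac12 (Aw)\cdot\Delta D_zH'=0$, while the right-hand side is $\tfrac{\kappa}{4}|w|^2>0$. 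So your version is the more careful one.

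There are two small bookkeeping slips, neither of which affects the conclusion.
\begin{itemize}
\item Your display shows that $\mathcal{B}=(F',\tfrac12 A D_zH')$ is $\tfrac{\kappa}{4}$-strongly monotone in the standard inner product. It does not show this for $(F',AD_zH')$, which is strongly monotone only in the weighted product $\langle U,V\rangle+\tfrac12 q\cdot q'$. You should therefore take $\theta=\mathcal{B}^{-1}\circ\mathrm{diag}(I,\tfrac12 A)$.
\item The $U=V$ specialization gives $(q-q')\cdot A\Delta D_zH'\geq \tfrac{\kappa}{2}|q-q'|^2$, not $\kappa|q-q'|^2$.
\end{itemize}
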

\begin{proof}
We first fix a couple $(X,z,p)\in \mathcal{H}^d\times \reels^{2m}$. Let us observe that for all $(U,q^1),(V,q^2)\in \mathcal{H}^d\times \reels^{d^0}d$
\begin{gather*}\langle F'(X,U,p,q^1,z,\mathcal{L}(X,U))-F'(X,V,p,q^2,z,\mathcal{L}(X,V)),U-V\rangle\\
    + \langle D_zH('p,q^1,z,\mathcal{L}(X,U))-D_z H'(p,q^2,z,\mathcal{L}(X,V)),\frac{q^1-q^2}{2}\rangle\\
    \geq \alpha \left(\|U-V\|^2+\frac{1}{4}\|q^1-q^2\|^2\right)
\end{gather*}
By the strong monotonicity of  $(U,q)\mapsto (F',\frac{1}{2}D_zH')(X,U,p,q,z,\mathcal{L}(X,U))$, this function has a Lipschitz inverse defined on $\mathcal{H}^d\times \reels^{d^0}$. As for the regularity with respect to the other variables, it follows naturally using the fact that $F',D_zH'$ are Lipschitz functions with respect to all of their arguments. 
\end{proof}
\begin{remarque}
    \label{remarque: inverse for mfg}
    Observe that in the case of mean field games this inverse function is quite simple to compute. Indeed $\theta$ is exactly given by 
    \[\theta(X,p,z): (\alpha,q)\mapsto \left(\nabla_\alpha L(X,\alpha,\frac{p+q}{2},\mathcal{L}(X,\alpha)),D_z H'^{-1}(p,z,\mathcal{L}(X,\alpha))(q))\right),\]
    for $L$ the Lagrangian of minor players and where $q\mapsto D_zH'^{-1}(p,z,\mu)(q)$ indicates the inverse function of 
    \[q\mapsto D_zH'(p,q,z,\mu).\]
\end{remarque}
Introducing the space of adapted controls
\[\mathcal{H}^T_{mp}=\left(\mathcal{H}^T_\mathcal{F}\right)^d\times \left(\mathcal{H}^T_{\mathcal{F}^0}\right)^{d^0}.\]
For any control $(\alpha^x_s,\alpha^q_s)_{s\in [0,T]}\in \mathcal{H}^T_{mp}$, we introduce the following parametrized system 
\begin{equation}
\label{eq: parametrized system mfgmp}
\left\{
\begin{array}{l}
     \displaystyle X^\alpha_t=X_0-\int_0^t  \alpha^x_sds+\sqrt{2\sigma}B_t, \\
     \displaystyle q^{f,\alpha}_t=q_0-\int_0^t \alpha^q_sds+\sqrt{2\sigma^0}W^0_t,\\
     \displaystyle q^{b,\alpha}_t=q^{f,\alpha}_T+\int_t^T D_zH'(q^{f,\alpha}_s,\theta^H_s,\varphi^\alpha_s,Z^{\varphi,\alpha}_s,\mathcal{L}(X^\alpha_s,\theta^F_s|\mathcal{F}^0_s))ds-\int_t^TZ^{q,\alpha}_sdW^0_s,\\
    \displaystyle  U^\alpha_t= g(X^\alpha_T,q^{f,\alpha}_T,\mathcal{L}(X^\alpha_T|\mathcal{F}^0_T))+\int_t^T G'(X^\alpha_s,q^{f,\alpha}_s,\theta^H_s,\theta^F_s,\varphi^\alpha_s,Z^{\varphi,\alpha}_s,\mathcal{L}(X^\alpha_s,\theta^F_s|\mathcal{F}^0_s))ds-\int_t^T
    Z^\alpha_sd(B,W^0)_s,\\
\displaystyle \varphi^\alpha_t= \psi(q^{f,\alpha}_T,\mathcal{L}(X^\alpha_T|\mathcal{F}^0_T))+\int_t^T L'_H(q^{f,\alpha}_s,\theta^H_s,\varphi^\alpha_s,Z^{\varphi,\alpha}_s,\mathcal{L}(X^\alpha_s,\theta^F_s|\mathcal{F}^0_s))ds-\sqrt{2\sigma^0}\int_t^T Z^{\varphi,\alpha}_sdW^0_s,\\
(\theta^F_s,\theta^H_s)=\theta(X^\alpha_s,q^{f,\alpha_s},Z^\alpha_s)(\alpha^x_s,\alpha^q_s).
\end{array}
\right.
\end{equation}
By standard results on BSDEs \cite{zhang2017backward} (Theorem 4.3.1), for any control $(\alpha^x_s,\alpha^q_s)_{s\in [0,T]}$ there exists a unique strong solution to the above system. For the forward part of this system, this follows from standard results on SDEs. As for the backward SDEs they can be written in the form 
\[(U^\alpha_t,\varphi^\alpha_t)=\xi^\alpha_T-\int_t^T f(s,\omega,U^\alpha_s,\varphi^\alpha_s,Z^\alpha_s)ds+\int_t^T Z^\alpha_sd(W^0_s,B_s),\]
with a $\mathcal{F}-$adapted Lipschitz driver such that 
\[\esp{|\xi^\alpha_T|^2+\int_0^T |f(s,\omega,0)|^2}<+\infty.\]
 We now introduce the associated monotone operator
\begin{lemma}
    \label{lemma: monotone functional mfgmp}
 Under Hypotheses \ref{hyp: monotonicity mfgmp}, \ref{hyp: coefficients mfgmp}, we define the operator $v:\mathcal{H}^T_{mp}\to \mathcal{H}^T_{mp}$ by
 \[\forall \alpha\in \mathcal{H}^T_{mp}, \quad v(\alpha)=\left(\begin{array}{c}
 I \quad 0\\
 0 \quad  \frac{1}{2}A
 \end{array}\right)   \left(\theta(X^\alpha_s,q^{f,\alpha}_s,Z^{\varphi,\alpha}_s)(\alpha^x_s,\alpha^q_s)-(U^\alpha_s,q^{b,\alpha}_s)
    \right)_{s\in [0,T]}.\]
If $\sigma^0>\sigma^0_T$ then 
    \begin{enumerate}
        \item[-]$v$ is Lipschitz on $\mathcal{H}^T_{mp}$
        \item[-] for any $\alpha,\alpha'\in \mathcal{H}^T_{mp}$
        \[\langle v(\alpha)-v(\alpha'),\alpha-\alpha'\rangle \geq 0.\]
        \item[-] Let $\alpha^*=(F^M,D_zH^M)(X_s,q_s,U_s,Z^\varphi_s,\mathcal{L}(X_s,U_s|\mathcal{F}^0_s))$ for $(X_s,q_s,U_s,\varphi_s,Z_s,Z^\varphi_s)_{s\in [0,T]}$ the solution of \eqref{eq: mfg with major lipschitz coefficients}. Then, for any $\alpha,\alpha' \in \mathcal{H}^T_{mp}$, 
        \[\langle v(\alpha^*),\alpha-\alpha'\rangle^T=0.\]
    \end{enumerate}
\end{lemma}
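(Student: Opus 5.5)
The plan is to treat the three claims separately, in increasing order of difficulty, working throughout with the truncated system \eqref{eq: mfg with major lipschitz coefficients}, whose coefficients are globally Lipschitz.

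\textbf{Lipschitz continuity of $v$.} This is a stability estimate for the parametrized system \eqref{eq: parametrized system mfgmp}.
\begin{enumerate}
\item[1.] For two controls $\alpha,\alpha'$, the forward components $(X^\alpha,q^{f,\alpha})$ are explicit integrals of the controls, so
\[\sup_t\|X^\alpha_t-X^{\alpha'}_t\|+\sup_t\|q^{f,\alpha}_t-q^{f,\alpha'}_t\|\leq C\sqrt{T}\,\|\alpha-\alpha'\|_T.\]
\item[2.] The map $\theta$ is Lipschitz by Lemma \ref{lemma: coef invertible mfgmp}. The drivers of the backward equations for $(q^b,U,\varphi)$ are Lipschitz after truncation at level $M$. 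Standard a priori estimates for Lipschitz BSDEs (e.g. \cite{zhang2017backward}) then give
\[\|(U^\alpha,q^{b,\alpha},\varphi^\alpha,Z^{\varphi,\alpha})-(U^{\alpha'},q^{b,\alpha'},\varphi^{\alpha'},Z^{\varphi,\alpha'})\|_T\leq C\|\alpha-\alpha'\|_T.\]
\item[3.] Composing with $\theta$ once more yields $\|v(\alpha)-v(\alpha')\|_T\leq C\|\alpha-\alpha'\|_T$.
\end{enumerate}

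\textbf{Monotonicity.} I would reproduce the It\^o computation of Step 1 in the proof of Theorem \ref{thm: existence for sigma T mfgmp}, now applied to the parametrized processes.
\begin{enumerate}
\item[1.] Set $\Delta X=X^\alpha-X^{\alpha'}$ and define $\Delta U,\Delta q^f,\Delta q^b,\Delta\varphi$ similarly. Consider
\[V_s=\langle \Delta U_s,\Delta X_s\rangle+\tfrac12\langle \Delta q^b_s,A\Delta q^f_s\rangle-\beta(T-s)\|\Delta\varphi_s\|^2.\]
Since $\frac{d}{ds}\Delta X=-\Delta\alpha^x$ and $\frac{d}{ds}\Delta q^f=-\Delta\alpha^q$, It\^o's formula produces the cross terms $-\langle \Delta U,\Delta\alpha^x\rangle-\tfrac12\langle A\Delta q^b,\Delta\alpha^q\rangle$.
\item[2.] Adding $\langle \Delta\theta,\Delta\alpha\rangle$ (with the weights of $v$) to both sides gives
\[\langle v(\alpha)-v(\alpha'),\alpha-\alpha'\rangle^T=\mathbb{E}[V_0]-\mathbb{E}[V_T]+\int_0^T R_s\,ds,\]
up to sign conventions. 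The remainder $R$ collects $\langle\Delta\theta,\Delta\alpha\rangle$, the driver terms $\langle\Delta G',\Delta X\rangle$ and $\tfrac12\langle A\Delta D_zH',\Delta q^f\rangle$, and the $\varphi$ terms.
\item[3.] Recall that $\Delta\alpha=(F',D_zH')(\theta)-(F',D_zH')(\theta')$. Then $\langle\Delta\theta,\Delta\alpha\rangle$ together with the $G'$ and $D_zH'$ drift terms is exactly the joint expression controlled by \eqref{eq: monotonicity on coefficients with z}, evaluated at the point $(X,\theta^F,\tfrac{q^f+\theta^H}{2})$. This is where the averaging $\frac{q^f+q^b}{2}$ in $F',H'$ is used.
\item[4.] Three sign controls close the argument:
\begin{enumerate}
\item[-] $\mathbb{E}[V_T]\geq0$ holds by \eqref{eq: monotonicity on initial condition}, since $q^b_T=q^f_T$.
\item[-] $V_0$ vanishes in the difference, because the initial conditions coincide.
\item[-] The $|\Delta Z^\varphi|^2$ terms coming from $L_H$ and the $z$-dependence are absorbed by $\sigma^0\beta|\Delta Z^\varphi|^2$, using $\sigma^0>\sigma^0_T$ and the bound $M$, exactly as in that proof.
\end{enumerate}
\end{enumerate}
The main obstacle is this bookkeeping: the $q^b$ equation is genuinely backward, so the $A$-weighted cross term has to pair a forward and a backward increment. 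I expect it to close only because $\langle \Delta q^b,A\Delta q^f\rangle$ has its own $dW^0$-martingale part, and because the drift of $q^b$ is $D_zH'$ evaluated at $\theta^H$, which by construction of $\theta$ equals $\alpha^q$ up to the inverse relation. If the sign does not close with $\beta$ constant, I would fall back on the time-dependent $\beta(t)=\beta_0e^{(2\lambda-\gamma^*)t}$ from Theorem \ref{thm: existence for sigma T mfgmp}.

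\textbf{Zero at the equilibrium.} For $\alpha=\alpha^*$, the processes of \eqref{eq: parametrized system mfgmp} coincide with the solution of \eqref{eq: q forward backward} with $q^b\equiv q^f=q$. This relies on the uniqueness of strong solutions given by Theorem \ref{lemma: uniqueness lip sol}, applied to the Lipschitz solution $(q,U,\varphi)$. Then $\theta(\alpha^*)=(U,q)=(U^{\alpha^*},q^{b,\alpha^*})$ by the definition of $\theta$ as the inverse of $(F',D_zH')$. Hence $v(\alpha^*)=0$, and $\langle v(\alpha^*),\alpha-\alpha'\rangle^T=0$ for all $\alpha,\alpha'$.
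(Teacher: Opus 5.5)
Your proposal follows the paper's proof essentially step for step: Lipschitz continuity of $v$ from the Lipschitz inverse $\theta$ of Lemma \ref{lemma: coef invertible mfgmp} plus standard Lipschitz BSDE estimates, $v(\alpha^*)=0$ from uniqueness, and monotonicity from It\^o's formula applied to $\langle \Delta U,\Delta X\rangle+\frac12\langle \Delta q^b,A\Delta q^f\rangle$ together with the $\beta(T-s)|\Delta\varphi_s|^2$ identity. The monotonicity step then uses the joint monotonicity evaluated at $(X,\theta^F,\frac{q^f+\theta^H}{2})$ and the time-dependent $\beta$ from the proof of Theorem \ref{thm: existence for sigma T mfgmp}, which the paper uses directly rather than as a fallback; one small correction is that $V_0$ does not vanish (only its first two terms do, leaving $-\beta(T)\|\Delta\varphi_0\|^2$), but this term enters $\langle v(\alpha)-v(\alpha'),\alpha-\alpha'\rangle^T$ with the favourable sign, so the argument is unaffected.
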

\begin{proof}
    The first claim is a direct consequence of Lemma \ref{lemma: coef invertible mfgmp}, and standard estimates on backward Lipschitz SDEs. The last claim is also straightforward: by the uniqueness of a strong solution to \eqref{eq: q forward backward}, it has already been established that for this choice $v(\alpha^*)=0$.
    It remains to show the second claim on the monotonicity of $v$. 
    By Ito's lemma for any $\alpha,\alpha'\in \mathcal{H}^T_{mp}$, 
    \begin{gather*}\esp{\int_0^T (-U^\alpha_s\cdot(\alpha^x_s-\alpha'^x_s)-\frac{1}{2}Aq^{b,\alpha}_s\cdot (\alpha^q_s-\alpha'^q_s)ds}\\
        =   \esp{g(X^\alpha_T,q^{f,\alpha}_T,\mathcal{L}(X^\alpha_T|\mathcal{F}^0))\cdot (X^\alpha_T-X^{\alpha'}_T)+\frac{1}{2}Aq^{f,\alpha}_T\cdot (q^{f,\alpha}_T-q^{f,\alpha'}_T)}\\
        +\esp{\int_0^T G'(X^\alpha_s,q^{f,\alpha}_s,\theta^H_s,\theta^F_s,\varphi^\alpha_s,Z^{\varphi,\alpha}_s,\mathcal{L}(X^\alpha_s,\theta^F_s|\mathcal{F}^0_s))\cdot (X^\alpha_s-X^{\alpha'}_s)ds}\\
         +\esp{\int_0^T \frac{1}{2}AD_zH'(q^{f,\alpha}_s,\theta^H_s,\varphi^\alpha_s,Z^{\varphi,\alpha}_s,\mathcal{L}(X^\alpha_s,\theta^F_s|\mathcal{F}^0_s))\cdot (q^{f,\alpha}_s-q^{f,\alpha'}_s)ds}.
    \end{gather*}
Letting $(V^\alpha_s,p^\alpha_s)=\theta(X^\alpha_s,q^{f,\alpha},Z^{\varphi,\alpha}_s)(\alpha)$, it follows that for any smooth function of time $\beta:[0,T]\to \reels$, 
\begin{gather*}
\langle v(\alpha)-v(\alpha'),\alpha-\alpha'\rangle^T=\\
 \esp{\left( g(X^\alpha_T,q^{f,\alpha}_T,\mathcal{L}(X^\alpha_T|\mathcal{F}^0))-g(X^{\alpha'}_T,q^{f,\alpha'}_T,\mathcal{L}(X^{\alpha'}_T|\mathcal{F}^0))\right)\cdot (X^\alpha_T-X^{\alpha'}_T)+\frac{1}{2}A(q^{f,\alpha}-q^{f,\alpha'}_T)\cdot (q^{f,\alpha}_T-q^{f,\alpha'}_T)}\\
 +\esp{+\beta(0)|\varphi^\alpha_T-\varphi^{\alpha'}_T|^2-\beta(T)|\varphi^\alpha_0-\varphi^{\alpha'}_0|^2}\\
 +\esp{\int_0^T \left(G'^{\alpha}_s-G'^{\alpha'}_s\right)\cdot (X^\alpha_s-X^{\alpha'}_s)ds+\int_0^T \left(F'^{\alpha}_s-F'^{\alpha'}_s\right)\cdot (V^\alpha_s-V^{\alpha'}_s)ds }\\
 +\esp{\int_0^T A\left(D_zH'^{\alpha}_s-D_zH'^{\alpha'}_s\right)\cdot (\frac{q^{f,\alpha}_s+p^\alpha_s}{2}-\frac{q^{f,\alpha'}_s+p^{\alpha'}_s}{2})ds+\int_0^T\sigma^0\beta(T-s)|Z^\alpha_s-Z^{\alpha'}_s|^2 ds }\\
 +\esp{\int_0^T\left(( {L_H'}^{\alpha}_s-{L_H'}^{\alpha'}_s)(\varphi^\alpha_s-\varphi^{\alpha'}_s)+\frac{d\beta}{ds}(T-s)|\varphi^\alpha_s-\varphi^{\alpha'}_s|^2\right)ds},
\end{gather*}
where we have used the notation 
\[G'^{\alpha}_s=G'(X^\alpha_s,q^{f,\alpha}_s,p^\alpha_s,V^\alpha_s,\varphi^\alpha_s,Z^{\varphi,\alpha}_s,\mathcal{L}(X^\alpha_s,V^\alpha_s|\mathcal{F}^0)),\]
and so on for the different coefficients. Since the Hypotheses of Theorem \ref{thm: existence for sigma T mfgmp} are satisfied, we may choose $\beta$ as we did in its proof and it follows naturally that for this choice, there exists a constant $c_v$ depending on the coefficients but independent of $(\alpha,\alpha')$
\begin{gather}
 \label{strong monotonicity of functional mfgmp}   \langle v(\alpha)-v(\alpha'),\alpha-\alpha'\rangle^T \\
\nonumber \geq c_v\esp{\int_0^T \left(|V^{\alpha}-V^{\alpha'}|^2+|Z^{\varphi,\alpha}_s-Z^{\varphi,\alpha'}_s|^2+|X^\alpha_s-X^{\alpha'}_s|^2+\left|\frac{q^{f,\alpha}_s+p^\alpha}{2}-\frac{q^{f,\alpha'}_s+p^{\alpha'}}{2}\right|^2\right)ds }.
\end{gather}
\end{proof}
At this point, let us explain the reasoning behind the parametrization \eqref{eq: parametrized system mfgmp} we presented, and why it works. It might seem very unnatural that we first introduced a fictitious backward version of the forward process $(q_t)_{t\in [0,T]}$ before parametrizing the problem. At an extremely formal level, this is because in the theory of monotone FBSDEs, comparison is obtained by studying the inner product between the forward and the backward processes. Adding a quadratic term 
\[\langle q_T-q'_T,A(q_T-q'_T)\rangle,\]
corresponds to interpreting $(q_t)_{t\in [0,T]}$ both as a forward and backward process. For a more rigorous explanation of the logic behind this idea, we refer to \cite{noise-add-variable} Section 3.2, which also hints at how the notion of monotonicity used in Hypothesis \ref{hyp: monotonicity mfgmp} can be generalized. For example it can be adapted to G-monotonicity \cite{G-monotonicity,noise-add-variable}, though in this case a different parametrization must be used for the system \eqref{eq: mfg with major lipschitz coefficients}. Before we state the main convergence result for \eqref{mfg with major}, let us also insist once again, that this is only one possible parametrization of the problem. It is unclear to us whether other parametrizations of \eqref{eq: mfg with major lipschitz coefficients} exist in this setting, possibly leading to better convergence results. 
\begin{thm}
 Let $v$ be defined as in Lemma \ref{lemma: monotone functional mfgmp}, for a given initial condition $\alpha_1)\in (\mathcal{H}^T_{mp})$ and $n\geq 1$ we introduce the sequences
\[\left\{\begin{array}{c}
     \alpha^{n+\frac{1}{2}}=\alpha^n-\gamma v(\alpha^n),  \\
     \alpha^{n+1}=\alpha^n-\gamma v(\alpha^{n+\frac{1}{2}}),
\end{array}\right. \]
under Hypotheses \ref{hyp: coefficients mfgmp}, \ref{hyp: monotonicity mfgmp}, if 
\[\gamma\leq \frac{1}{\|v\|_{Lip,\mathcal{H}^T}},\]
and $\sigma^0>\sigma^0_T$, then letting 
\[\forall t\in [0,T],\quad (\bar{U}^n_t,\bar{p}^n_t,\bar{q}^n_t,\bar{X}^n_t,\bar{Z}^{\varphi,n}_t)=\frac{1}{n}\sum_{i=1}^n \left(\theta(X^{\alpha^{i+\frac{1}{2}}}_t,q^{\alpha^{i+\frac{1}{2}}}_t,Z^{\varphi,\alpha^{i+\frac{1}{2}}}_t)(\alpha^{i+\frac{1}{2}}_t),q^{\alpha^{i+\frac{1}{2}}}_t,X^{\alpha^{i+\frac{1}{2}}}_t,Z^{\varphi,\alpha^{i+\frac{1}{2}}}_t\right),\]
and $(X_s,U_s,q_s,\varphi_s,Z_s,Z^\varphi_s)_{s\in [0,T]}$ be the strong solution of \eqref{eq: mfg with major lipschitz coefficients},
the following holds 
\begin{enumerate}
    \item[-] \[\forall n\geq 1,\quad \|(U,X,q,Z^\varphi)-(\bar{U}^n,\bar{X}^n,\frac{\bar{q}^n+\bar{p}^n}{2},\bar{Z}^{\varphi,n})\|_T^2\leq \frac{\| (F,D_zH)(X_\cdot,U_\cdot,q_\cdot,Z^\varphi_\cdot)-\alpha_1\|^2_T}{2\gamma c_v n}.\]
    \item[-] Letting  $(\bar{\varphi}^n_t)_{t\in [0,T]}$ be the unique strong solution to the backward SDE
    \[\forall t\in [0,T],\quad \bar{\varphi}^n_t=\psi(\frac{\bar{q}^n_T+\bar{p}^n_T}{2},\mathcal{L}(\bar{X}^n_T))-\int_t^T L_H'(\frac{\bar{q}^n_T+\bar{p}^n_T}{2},\bar{\varphi}^n_s,\bar{Z}^{\varphi,n}_s,\mathcal{L}(\bar{X}^n_s,\bar{U}^n_s|\mathcal{F}^0_s))ds-\int_t^T \sqrt{2\sigma^0}Z^{\varphi,n}_s dW^0_s,\] there exists a constant $C$ depending only on $T$ and the coefficients such that 
    \[\| \bar{\varphi}^n_\cdot-\varphi_\cdot\|^2_T\leq C \frac{\| (F,D_zH)(X_\cdot,U_\cdot,q_\cdot,Z^\varphi_\cdot)-\alpha_1\|^2_T}{2\gamma c_L n}.\] 
\end{enumerate}
\end{thm}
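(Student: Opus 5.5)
The argument is the standard ergodic (averaged-iterate) analysis of the extragradient method, applied to $v$ from Lemma \ref{lemma: monotone functional mfgmp}. Two facts about $v$ are all we need. First, $v(\alpha^*)=0$. Second, the strong lower bound \eqref{strong monotonicity of functional mfgmp}: $\langle v(\alpha)-v(\alpha'),\alpha-\alpha'\rangle^T$ controls the $L^2$ distance of the image processes $(V^\alpha,X^\alpha,\frac{q^{f,\alpha}+p^\alpha}{2},Z^{\varphi,\alpha})$. The plan is to bound the averaged monotonicity gap by a telescoping sum, and then use convexity of the squared norm to pass from the average of distances to the distance of the averages.

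\textbf{Step 1 (one-step inequality).} Write $\alpha^{n+\frac12}=\alpha^n-\gamma v(\alpha^n)$ and $\alpha^{n+1}=\alpha^n-\gamma v(\alpha^{n+\frac12})$. The usual Korpelevich computation in the Hilbert space $\mathcal{H}^T_{mp}$ gives
\[
2\gamma\langle v(\alpha^{n+\frac12}),\alpha^{n+\frac12}-\alpha^*\rangle^T\leq \|\alpha^n-\alpha^*\|_T^2-\|\alpha^{n+1}-\alpha^*\|_T^2-(1-\gamma^2\|v\|_{Lip}^2)\|\alpha^{n+\frac12}-\alpha^n\|_T^2 .
\]
It is obtained by expanding $\|\alpha^{n+1}-\alpha^*\|^2$, inserting $\alpha^{n+\frac12}$, and using the Lipschitz bound $\|v(\alpha^{n+\frac12})-v(\alpha^n)\|\leq \|v\|_{Lip}\|\alpha^{n+\frac12}-\alpha^n\|$. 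The step-size condition $\gamma\leq 1/\|v\|_{Lip}$ makes the last term nonpositive, so it can be dropped.

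\textbf{Step 2 (use $v(\alpha^*)=0$ and strong monotonicity).} Since $v(\alpha^*)=0$, the left side equals $2\gamma\langle v(\alpha^{n+\frac12})-v(\alpha^*),\alpha^{n+\frac12}-\alpha^*\rangle^T$. By \eqref{strong monotonicity of functional mfgmp} this is at least $2\gamma c_v D_n$, where $D_n$ is the $\|\cdot\|_T^2$ distance between $(V,X,\frac{q^f+p}{2},Z^\varphi)$ at $\alpha^{n+\frac12}$ and at $\alpha^*$. We must also identify the image of $\alpha^*$ with the true solution, namely
\[
(V^{\alpha^*},X^{\alpha^*},\tfrac{q^{f,\alpha^*}+p^{\alpha^*}}{2},Z^{\varphi,\alpha^*})=(U,X,q,Z^\varphi).
\]
This follows from uniqueness for \eqref{eq: q forward backward} together with the fact that $q^b\equiv q^f$ at the solution, so that $\theta(\alpha^*)=(U,q)$. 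Summing over $i=1,\dots,n$ and telescoping gives
\[
2\gamma c_v\sum_i D_i\leq \|\alpha_1-\alpha^*\|_T^2,
\]
and $\alpha^*=(F,D_zH)(X,U,q,Z^\varphi)$. Jensen's inequality then gives $\|\text{average}-\text{solution}\|_T^2\leq \frac1n\sum_i D_i$, which is the first claim.

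\textbf{Step 3 ($\varphi$).} The processes $\bar\varphi^n$ and $\varphi$ solve BSDEs with the same Lipschitz driver $L_H'$; the driver is Lipschitz after the truncation at level $M$. They differ only in the inputs: the terminal data and the coefficient arguments $(\frac{\bar q^n+\bar p^n}{2},\bar Z^{\varphi,n},\mathcal{L}(\bar X^n,\bar U^n|\mathcal{F}^0))$ versus $(q,Z^\varphi,\mathcal{L}(X,U|\mathcal{F}^0))$. Conditional laws are controlled by $\mathcal{W}_2^2\leq \mathbb{E}[\,\cdot\,|\mathcal{F}^0]$. The standard a priori estimate for Lipschitz BSDEs (It\^o on $|\bar\varphi^n-\varphi|^2$ plus Gronwall) bounds $\|\bar\varphi^n-\varphi\|_T^2$ by $C$ times the quantity estimated in Step 2, plus the terminal mismatch.

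\textbf{Main obstacle.} The terminal term is the main obstacle: Step 2 gives only time-integrated control, not control of $\bar X^n_T$ and $\frac{\bar q^n_T+\bar p^n_T}{2}$ at the final time. I would first try to recover it from the forward equations. These are linear in the controls, so $X^{\alpha}_T$ and $q^{f,\alpha}_T$ are continuous in $\alpha$. For $q^f$ we can try to use that $\theta$ is Lipschitz, and that the averaged controls are exactly the images of the averaged $(\bar U^n,\bar p^n)$ via $(F',D_zH')$, to bound terminal errors by time-integrated ones. If that fails, the fallback is to bound the terminal mismatch directly through the strong monotonicity of the terminal condition \eqref{eq: monotonicity on initial condition}, which appears in the same It\^o identity used for \eqref{strong monotonicity of functional mfgmp}. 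That inequality can be sharpened to retain a positive multiple of $\mathbb{E}|X^\alpha_T-X^{\alpha'}_T|^2+|q^{f,\alpha}_T-q^{f,\alpha'}_T|^2$, and the rest of the argument then goes through unchanged.
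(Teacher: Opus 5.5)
\textbf{First claim.} Your Steps 1 and 2 are the paper's proof of the first claim. The paper quotes Proposition 2 of \cite{NEURIPS2021_6d65b5ac} for your one-step Korpelevich inequality, then uses $v(\alpha^*)=0$, \eqref{strong monotonicity of functional mfgmp}, telescoping and (implicitly) Jensen, so that part is correct.

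\textbf{Second claim: the terminal term.} The paper only invokes standard Gronwall estimates for Lipschitz BSDEs. You are right that this leaves the terminal mismatch uncontrolled, because the first claim bounds only $\|\cdot\|_T$-norms. However, neither of your two candidate fixes works as stated.
\begin{itemize}
\item[(i)] The fallback fails. The right-hand side of \eqref{eq: monotonicity on initial condition} contains only $\beta_0\|\psi(q,\mathcal{L}(X))-\psi(q',\mathcal{L}(Y))\|^2$. Moreover, $g$ is not assumed strongly monotone and $A$ need not be positive definite. So there is no positive multiple of $\|X^\alpha_T-X^{\alpha'}_T\|^2+|q^{f,\alpha}_T-q^{f,\alpha'}_T|^2$ to retain.
\item[(ii)] In your first route, the claim that the averaged controls are the images of the averaged processes under $(F',D_zH')$ is false, since this map is nonlinear. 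Also, the Lipschitz bound on $\theta$ points in the wrong direction.
\end{itemize}

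\textbf{How to repair the first route.} Since $X^\alpha$ and $q^{f,\alpha}$ are affine in $\alpha$, and $X^{\alpha^*}=X$, $q^{f,\alpha^*}=q$, the averages $\bar X^n,\bar q^n$ are driven by $\bar\alpha^n=\frac1n\sum_{i=1}^n\alpha^{i+\frac12}$. Hence
\[\|\bar X^n_T-X_T\|^2+\|\bar q^n_T-q_T\|^2\leq T\|\bar\alpha^n-\alpha^*\|_T^2\leq \frac{T}{n}\sum_{i=1}^n\|\alpha^{i+\frac12}-\alpha^*\|_T^2 .\]
By definition of $\theta$, each $\alpha^{i+\frac12}$ equals $(F',D_zH')$ evaluated at its own processes $(X^{\alpha^{i+\frac12}},V^{\alpha^{i+\frac12}},q^{f,\alpha^{i+\frac12}},p^{\alpha^{i+\frac12}},Z^{\varphi,\alpha^{i+\frac12}})$. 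Likewise, $\alpha^*$ equals $(F',D_zH')$ at $(X,U,q,q,Z^\varphi)$. Now $F'$ and $D_zH'$ see $(q^f,q^b)$ only through $\frac{q^f+q^b}{2}$, and they are Lipschitz after truncation at level $M$. This gives $\|\alpha^{i+\frac12}-\alpha^*\|_T^2\leq C D_i$, and your Step 2 then yields the $O(1/n)$ bound. It is the Lipschitz continuity of $(F',D_zH')$, not of $\theta$, that is used here.

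\textbf{What this does not control.} The argument bounds $\bar q^n_T$ but not $\bar p^n_T$. In fact $\bar p^n$ is built from $\alpha^{i+\frac12}$ and $Z^{\varphi,\alpha^{i+\frac12}}$, so it is only defined $dt$-a.e. The terminal datum of $\bar\varphi^n$ therefore has to be read as $\psi(\bar q^n_T,\mathcal{L}(\bar X^n_T|\mathcal{F}^0_T))$. With that reading, the interior driver terms are handled by your Step 3, and the final constant involves $c_v$ rather than $c_L$.
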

\begin{proof}
    Letting $\alpha^*$ be defined as in Lemma \ref{lemma: monotone functional mfgmp} (i.e. the control associated to the solution of \eqref{eq: def lip sol system}). We first observe by applying Proposition 2 of \cite{NEURIPS2021_6d65b5ac}, that for any $\alpha\in \mathcal{H}^T_{mp}$, the following holds 
\[\sum_{i=1}^n \langle v(\alpha^{i+\frac{1}{2}}),\alpha^{i+\frac{1}{2}}-\alpha \rangle^T\leq \frac{\|\alpha-\alpha^1\|^2_T}{2\gamma}+\frac{1}{2}\sum_{i=1}^n \left(\gamma \|v(\alpha^{i+\frac{1}{2}})-v(\alpha^i)\|^2_T-\frac{1}{\gamma}\|\alpha^{i+\frac{1}{2}}-\alpha^i\|^2_T\right).\]
In particular, if $\alpha=\alpha^*$ and $\gamma<\|v\|_{Lip,\mathcal{H}^T}$ then it follows that 
\[\sum_{i=1}^n \langle v(\alpha^{i+\frac{1}{2}})-v(\alpha^*),\alpha^{i+\frac{1}{2}}-\alpha^* \rangle^T\leq \frac{\|\alpha^*-\alpha^1\|^2_T}{2\gamma}.\]
Using the strong monotonicity of $v$ \eqref{strong monotonicity of functional mfgmp}, we deduce that 
\[\forall n\geq 1,\quad \|(U,X,q,Z^\varphi)-(\bar{U}^n,\bar{X}^n,\frac{\bar{q}^n+\bar{p}^n}{2},\bar{Z}^{\varphi,n})\|_T^2\leq \frac{\| (F,D_zH)(X_\cdot,U_\cdot,q_\cdot,Z^\varphi_\cdot)-\alpha_1\|^2_T}{2\gamma c_v n}.\]
The second claim follows from standard gronwall type estimates on Lipschitz backward SDEs. 
\end{proof}
\begin{remarque}
This procedure can also be applied to systems with a common noise process
\begin{equation}
\label{eq: mfg with common noise process}
\left\{
\begin{array}{l}
     \displaystyle X_t=X_0-\int_0^t  F(X_s,q_s,U_s,\mathcal{L}(X_s,U_s|\mathcal{F}^0_s))ds+\sqrt{2\sigma}B_t, \\
     \displaystyle q_t=q_0-\int_0^t b(q_s,\mathcal{L}(X_s,U_s|\mathcal{F}^0_s))ds+\sqrt{2\sigma^0}W^0_t,\\
    \displaystyle  U_t= g(X_T,q_T,\mathcal{L}(X_T|\mathcal{F}^0_T))+\int_t^T G(X_s,q_s,U_s,\mathcal{L}(X_s,U_s|\mathcal{F}^0_s))ds-\int_t^T
    Z_sd(B,W^0)_s.
\end{array}
\right.
\end{equation}
In mean field games, this class of systems arises naturally whenever the dynamics of players depend on a common process
\[dq_t=b(q_t,\mu_t)dt+\sqrt{2\sigma^0}dW^0_t.\]
For a study of these types of mean field games we refer to \cite{noise-add-variable,common-noise-in-MFG}, let us mention that Theorem \ref{thm: existence for sigma T mfgmp} still applies in the framework, only since there is no major player $\beta$ can be taken equal to 0 in the assumptions and there is no need for any assumption on $\sigma^0$. For more general results, we refer to \cite{common-noise-in-MFG} Theorem 4.30. 
\end{remarque}
Although we believe this result to be of interest on its own, the convergence rate is only of order $\frac{1}{\sqrt{n}}$. This is because the monotone functional $v$ we have defined is not strongly monotone in the following sense 
\begin{equation}
    \label{eq: strong monotonicity v}
    \exists c>0\quad \forall (\alpha,\alpha')\in \left(H^T_{mp}\right)^2, \quad \langle v(\alpha)-v(\alpha'),\alpha-\alpha'\rangle^T\geq \eta \| \alpha-\alpha'\|^2_T.\end{equation}
Otherwise, it is quite standard that extragradient methods achieve a geometric convergence rate \cite{extra-gradient}. We now show that in the particular case of mean field games, the strong monotonicity of $v$ is a natural consequence of previous assumptions 
\begin{thm}
    \label{thm: exponential convergence mfg}
Under Hypothesis \ref{hyp: motonicity mfgmp lagrangian minor}, let $F,G,H$ be given as in Lemma \ref{lemma: main result existence for mfg} and suppose Hypothesis \ref{hyp: coefficients mfgmp} holds for $H$. Let $v$ be defined as in Lemma \ref{lemma: monotone functional mfgmp}, if $\sigma^0>\sigma^0_T$ then there exists a constant $\eta>0$ such that \eqref{eq: strong monotonicity v} holds. In particular, if 
\[\gamma <\min \left(\frac{1}{2\|v\|_{Lip}},\frac{\eta}{\|v\|^2_{Lip}}\right),\]
there exists $\lambda\in (0,1)$ depending only on $\|v\|_{Lip}$ and $\eta$ such that, starting from any initial condition $\alpha_1\in H^T_{mp}$ the extragradient procedure 
\[\left\{\begin{array}{c}
     \alpha^{n+\frac{1}{2}}=\alpha^n-\gamma v(\alpha^n),  \\
     \alpha^{n+1}=\alpha^n-\gamma v(\alpha^{n+\frac{1}{2}}),
\end{array}\right. \]
satifies 
\[\| \alpha^*_{mp}-\alpha^n\|_T\leq \lambda^n \| \alpha^*_{mp}-\alpha_1\|_T,\]
for 
\[\alpha^*_{mp}=(\alpha^*,\alpha^{0,*}),\]
where $\alpha^*$ and $\alpha^{0,*}$ are respectively the optimal control for a representative minor player and the optimal control for the major player at the mean field equilibrium given by \eqref{eq: true formulation mfgmp}.
\end{thm}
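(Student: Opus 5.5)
The plan has two parts. First we upgrade the monotonicity estimate \eqref{strong monotonicity of functional mfgmp} from Lemma \ref{lemma: monotone functional mfgmp} into the strong monotonicity \eqref{eq: strong monotonicity v} of $v$. Then we run the standard contraction argument for extragradient iterations with a strongly monotone, Lipschitz operator. For the first part, recall that \eqref{strong monotonicity of functional mfgmp} bounds $\langle v(\alpha)-v(\alpha'),\alpha-\alpha'\rangle^T$ from below by $c_v$ times the $\mathcal{H}^T$ norm of the differences of $(V^\alpha,Z^{\varphi,\alpha},X^\alpha,\frac{q^{f,\alpha}+p^\alpha}{2})$. Here $(V^\alpha,p^\alpha)=\theta(X^\alpha,q^{f,\alpha},Z^{\varphi,\alpha})(\alpha)$. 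In the MFG setting of Lemma \ref{lemma: main result existence for mfg}, Remark \ref{remarque: inverse for mfg} gives $\theta$ explicitly: $V^\alpha=\nabla_\alpha L(X^\alpha,\alpha^x,\frac{q^{f,\alpha}+p^\alpha}{2},\mathcal{L}(X^\alpha,\alpha^x))$, and $p^\alpha$ inverts $q\mapsto D_zH'$. The key observation is that the monotonicity \eqref{eq: ineq monotonicity coef mfg control with major} controls $\|\Delta\tilde F\|^2$, where $\tilde F$ is exactly the control $\alpha^x$ (since $F=L_{inv}$ and $V=\nabla_\alpha\tilde L(\alpha^x)$). So I would redo the computation in the proof of Lemma \ref{lemma: monotone functional mfgmp} using \eqref{eq: ineq monotonicity coef mfg control with major} instead of \eqref{eq: monotonicity on coefficients with z}. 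With the same choice of $\beta(\cdot)$ and the condition $\sigma^0>\sigma^0_T$, this should give a lower bound by $c\,\|\alpha^x-\alpha'^x\|^2_T$ plus the previous terms.

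For the $q$-component, $\alpha^q=D_zH'(q^{f,\alpha},p^\alpha,Z^{\varphi,\alpha},\mathcal{L}(X^\alpha,V^\alpha))$. Under Hypothesis \ref{hyp: locally lipschitz in z}, truncated at level $M$, $D_zH^M$ is Lipschitz in all arguments with a constant depending on $M$. Hence $\|\alpha^q-\alpha'^q\|_T$ is bounded by a constant times the differences of $(\frac{q^f+p}{2},Z^\varphi,X,\alpha^x)$. Each of these is already controlled by $\langle v(\alpha)-v(\alpha'),\alpha-\alpha'\rangle^T$, using the law dependence through $\mathcal{W}_2\le\|\cdot\|$. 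Summing gives \eqref{eq: strong monotonicity v} with some $\eta>0$ depending on $c_L$, $c_v$ and the Lipschitz constants.

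For the second part, let $L=\|v\|_{Lip}$ and recall $v(\alpha^*_{mp})=0$ from Lemma \ref{lemma: monotone functional mfgmp}. The standard extragradient identity
\[\|\alpha^{n+1}-\alpha^*\|^2=\|\alpha^n-\alpha^*\|^2-2\gamma\langle v(\alpha^{n+\frac12}),\alpha^{n+\frac12}-\alpha^*\rangle^T-\|\alpha^{n+\frac12}-\alpha^n\|^2+\|\alpha^{n+1}-\alpha^{n+\frac12}\|^2\]
holds. Combined with $\|\alpha^{n+1}-\alpha^{n+\frac12}\|\le\gamma L\|\alpha^{n+\frac12}-\alpha^n\|$ and strong monotonicity, it yields
\[\|\alpha^{n+1}-\alpha^*\|^2\le\|\alpha^n-\alpha^*\|^2-2\gamma\eta\|\alpha^{n+\frac12}-\alpha^*\|^2-(1-\gamma^2L^2)\|\alpha^{n+\frac12}-\alpha^n\|^2.\]
Using $\|\alpha^n-\alpha^*\|^2\le2\|\alpha^{n+\frac12}-\alpha^*\|^2+2\|\alpha^{n+\frac12}-\alpha^n\|^2$, together with $\gamma<\frac1{2L}$ (so that $1-\gamma^2L^2\ge\frac34\ge\gamma\eta$) and $\gamma<\eta/L^2$ (which forces $\gamma\eta<1$), we get $\|\alpha^{n+1}-\alpha^*\|^2\le(1-\gamma\eta)\|\alpha^n-\alpha^*\|^2$. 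This gives the claim with $\lambda=\sqrt{1-\gamma\eta}$.

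Finally, the solution of \eqref{eq: mfg with major lipschitz coefficients} coincides with \eqref{eq: true formulation mfgmp} via Theorem \ref{lemma: uniqueness lip sol}, so $\alpha^*_{mp}$ is indeed the pair of equilibrium controls. The main obstacle is the first part: carrying out the Itô computation with \eqref{eq: ineq monotonicity coef mfg control with major} while keeping the control of $\|\alpha^x-\alpha'^x\|$ through $\Delta\tilde F$. This needs care because $\theta$ mixes $q^f$ and $p$ through the average $\frac{q^f+p}{2}$, and the extra $\Delta\tilde F$ term must survive the choice of $\beta$ and the $\sigma^0$ absorption.
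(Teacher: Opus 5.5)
Your proposal is correct and follows the paper's proof. You upgrade \eqref{strong monotonicity of functional mfgmp} using \eqref{eq: ineq monotonicity coef mfg control with major} so that $\Delta F'_s=\alpha^x_s-\alpha'^{x}_s$ is controlled, bound $\Delta D_zH'_s=\alpha^q_s-\alpha'^{q}_s$ through the Lipschitz constant of $D_zH^M$ (with law argument $\mathcal{L}(X^\alpha_s,\alpha^x_s|\mathcal{F}^0_s)$), and finish with the standard extragradient contraction, which you write out where the paper cites \cite{extragradient-in-mfg} Theorem 2.11.

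The one slip is in the final constant. The inequality $1-\gamma^2\|v\|_{Lip}^2\ge\gamma\eta$ only gives a factor such as $1-\gamma\eta/2$, not $1-\gamma\eta$. The latter needs $1-\gamma^2\|v\|^2_{Lip}\ge 2\gamma\eta$, and it fails, for example, for $v(\alpha)=\alpha-\alpha^*$ with $\gamma$ close to $1/2$. This still yields some $\lambda\in(0,1)$, so the theorem's conclusion holds.
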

\begin{proof}
Using the same notation as in the proof of Lemma \ref{lemma: monotone functional mfgmp}, and following its proof, we find that there exists a constant $c_v>0$ such that for any two controls $\alpha,\alpha'$ the following holds 
\begin{gather}
     \langle v(\alpha)-v(\alpha'),\alpha-\alpha'\rangle^T \\
 \nonumber \geq c_v\esp{\int_0^T \left(|\Delta F'_s|^2+|Z^{\varphi,\alpha}_s-Z^{\varphi,\alpha'}_s|^2+|X^\alpha_s-X^{\alpha'}_s|^2+\left|\frac{q^{f,\alpha}_s+p^\alpha}{2}-\frac{q^{f,\alpha'}_s+p^{\alpha'}}{2}\right|^2\right)ds },
\end{gather}
with 
\[\Delta F'_s=F'(X^\alpha_s,q^{f,\alpha}_s,p^\alpha_s,V^\alpha_s,\mathcal{L}(X^\alpha_s,V^\alpha_s|\mathcal{F}^0_s))-F'(X^{\alpha'}_s,q^{f,\alpha'}_s,p^{\alpha'}_s,V^{\alpha'}_s,\mathcal{L}(X^\alpha_s,V^{\alpha'}_s|\mathcal{F}^0_s)).\]
Using the fact that $D_z H^M$ is a Lipschitz function and 
\[D_zH'^{\alpha}_s=D_z H^M (\frac{q^{f,\alpha}_s+p^\alpha}{2},Z^\alpha_s,\mathcal{L}(X^\alpha_s,F'^\alpha_s|\mathcal{F}^0_s)),\]
we deduce that 
\begin{gather}
     \forall \alpha,\alpha'\in H^T_{mp}, \quad \langle v(\alpha)-v(\alpha'),\alpha-\alpha'\rangle^T \\
 \nonumber \geq \frac{c_v}{1+\|D_z H^M\|_{Lip}}\esp{\int_0^T \left(|\Delta F'_s|^2+|\Delta D_zH'_s|^2\right)ds },
\end{gather}
where $\Delta D_zH'_s$ is defined as $\Delta F'_s$ for the function $D_zH'$. Finally by remark \ref{remarque: inverse for mfg} we get exactly 
\[\forall \alpha,\alpha'\in H^T_{mp}, \quad \langle v(\alpha)-v(\alpha'),\alpha-\alpha'\rangle^T\geq \frac{c_v}{1+\|D_zH^M\|_{Lip}}\|\alpha-\alpha'\|^2_T.\]
The result is then a direct adaptation of \cite{extragradient-in-mfg} Theorem 2.11. 
\end{proof}

\section{Perspectives}
We introduce a notion of weak solution for MFGs with a major player. This notion does not require any assumption of differentiability with respect to the measure argument and works as soon as the noise of the major player is not degenerate. We believe it will prove useful for future studies of mean field games with a major player. We show that Lipschitz regularity of the value function of the major player is sufficient for the wellposedness of major-minor Nash equilibrium. These general results are also valid in a wider a class of extended mean field games and in particular for mean field games of controls with a major player. 

We also present a framework for long time existence of mean field games with a major player, which rely on a coupled monotonicity assumption between the major and minor players. Under sufficiently strong assumptions, this yields uniform in time estimates for both value functions. An interesting direction of research would be to study the convergence to the corresponding ergodic limit and possible turnpike properties for this problem. 

Finally we present a numerical method for MFGs with a major player with a theoretical exponential convergence rate. We hope that this will prove useful for simulations of MFG with a major player, both for applications and to further our understanding of these problems in general.

\section*{acknowledgements}
\begin{center}
The author is thankful to François Delarue for helpful discussions and comments on the manuscript.

\quad

Charles Meynard acknowledge the financial support of the European Research Council (ERC) under the European Union’s Horizon Europe research and innovation program (ELISA project, Grant agreement No. 101054746). Views and opinions expressed are however those of the author only and do not necessarily reflect those of the European Union or the European Research Council Executive Agency. Neither the European Union nor the granting authority can be held responsible for them
\end{center}

\bibliographystyle{plain}
\bibliography{source}
\end{document}

%% file: librairies_and_commands.tex
\usepackage[utf8]{inputenc}  
\usepackage{csquotes}
\usepackage[T1]{fontenc}
\usepackage{graphicx}
\usepackage{tikz}
\usepackage{pgfplots}
\usepackage{pgfplotstable}
\pgfplotsset{compat=1.18}
\pgfplotsset{compat=1.18}
\usetikzlibrary{positioning}
\usepackage{float}
\usepackage{algorithm}
\usepackage{algorithmic}
\usepackage[utf8]{inputenc}
\usepackage[export]{adjustbox}
\usepackage{wrapfig}
\usepackage{amsfonts}
\usepackage{amsthm}
\usepackage{array}
\usepackage[top=2cm, bottom=2cm, left=3cm, right=3cm]{geometry}
\usepackage{amssymb}
\usepackage{xcolor}
\usepackage{mathtools}
\usepackage{comment}
\usepackage{algorithm}
\usepackage{algorithmic}
\usepackage[main=english]{babel}
\usepackage{subcaption}
\usepackage{dsfont}
\usepackage{lmodern}
\usepackage{indentfirst}
\usepackage{tabto}
\usepackage{color}
\usepackage{systeme}
\usepackage{eso-pic}
\usepackage{float}
\usepackage{fancyhdr}
\usepackage{appendix}
\usepackage{listings}
\usepackage{pgf, tikz}
\usepackage[utf8]{inputenc}
\usepackage{lastpage}

\definecolor{bf}{rgb}{0,0,0.6} 
\definecolor{mygray}{gray}{0.85}
\definecolor{darkWhite}{rgb}{0.94,0.94,0.94}

\graphicspath{{images/}}

\newcommand{\espcond}[2]{\mathbb{E}\mathopen{}\left[#1\middle|#2\right]}
\newcommand{\esp}[1]{\mathbb{E}\mathopen{}\left[#1\right]}

\newcommand{\reels}{\mathbb{R}}

\usepackage{thmtools,thm-restate}

\newcommand{\FuncDef}[4]{\ensuremath{\left\{\begin{array}{c} #1\longrightarrow #2\\ #3\mapsto #4\\\end{array}\right.}}

\usepackage{hyperref}

\numberwithin{equation}{section}

\usepackage{amsthm, amssymb}

\newtheorem{thm}{Theorem}[section]
\newtheorem{definition}[thm]{Definition}

\newtheorem{lemma}[thm]{Lemma}
\newtheorem{hyp}[thm]{Hypothesis}

\theoremstyle{definition}
\newtheorem{remarque}[thm]{Remark}
\newtheorem{exemple}[thm]{Example}